\documentclass[11pt]{article}
\usepackage{amsfonts}
\usepackage{latexsym,amssymb,amsmath,graphics,cite,arydshln}
\usepackage{tikz}
\usepackage{fancyhdr}
\usepackage{color}

\usepackage{multirow} 
\usepackage{xcolor}
\usepackage{amssymb}
\usepackage{geometry}
\usepackage{lipsum}
\usepackage{enumerate}

\newcommand{\ignore}[1]{}

\begin{document}
\newcommand{\qed}{\hphantom{.}\hfill $\Box$\medbreak}
\newcommand{\proof}{\noindent{\bf Proof \ }}

\newtheorem{theorem}{Theorem}[section]
\renewcommand{\theequation}{\thesection.\arabic{equation}}
\newtheorem{lemma}[theorem]{Lemma}
\newtheorem{corollary}[theorem]{Corollary}
\newtheorem{example}[theorem]{Example}
\newtheorem{remark}[theorem]{Remark}
\newtheorem{definition}[theorem]{Definition}
\newtheorem{construction}[theorem]{Construction}
\newtheorem{fact}[theorem]{Fact}
\newtheorem{proposition}[theorem]{Proposition}
\newtheorem{conjecture}[theorem]{Conjecture}

\title{On nontrivial cross-$2$-intersecting families \footnote{Corresponding author: Lijun Ji}
}

\author{{\small Jingjun Bao$^1$, Lijun Ji$^2$, Liying Yu$^2$}\\
{\small $^1$ School of Mathematics and Statistics, Ningbo University, Ningbo 315211, P. R. China}\\
{\small $^2$ Department of Mathematics, Soochow University, Suzhou 215006, P. R. China}\\
{\small E-mail:  \ baojingjun@hotmail.com;  jilijun@suda.edu.cn; lyyu2000@stu.suda.edu.cn}\\
}

%
\date{}
\maketitle

\begin{abstract}
	
Two families \(\mathcal{A}\subseteq\binom{[n]}{k}\) and \(\mathcal{B}\subseteq\binom{[n]}{\ell}\) are  said to be nontrivial cross-\(t\)-intersecting if \(|A \cap B| \geq t\) for all \(A \in \mathcal{A}\) and \(B \in \mathcal{B}\), and $|\bigcap_{A\in \mathcal{A}\cup \mathcal{B}}A|<t$. In this paper, we determine the upper bound on \(|\mathcal{A}||\mathcal{B}|\) of two nontrivial cross-\(2\)-intersecting families \(\mathcal{A}\subseteq\binom{[n]}{k}\) and \(\mathcal{B}\subseteq\binom{[n]}{\ell}\)  for any positive integers $n,k,\ell$ with \(k\geq \ell  \geq 3\) and \(n \geq 3(k-1)\). Moreover, we characterize the extremal families attaining this bound. This settles the last unsolved case of a recent result by He, Li, Wu and Zhang (J. Combin. Theory Ser. A, 217 (2026) 106095).

\noindent {\bf Keywords}: Intersecting family; Nontrivial cross-\(t\)-intersecting families; Erd\H{o}s–Ko–Rado theorem; Generating set
\smallskip
\end{abstract}

\section{Introduction}
Let \(n\), \(k\) and \(t\) be positive integers with \(t \leq k \leq n\). We denote the set \(\{1, \ldots, n\}\) by \([n]\) and use \(2^{[n]}\) to represent its power set. A nonempty subset \(\mathcal{F}\) of \(2^{[n]}\) is called \(k\)-{\em uniform} if its elements all have size \(k\). For any finite set \(D\), we use \(\binom{D}{k}\) to denote the collection of all \(k\)-element subsets of \(D\). We say that \(\mathcal{F} \subset 2^{[n]}\) is a \(t\)-{\em intersecting family} if \(|A\cap B|\geq t\) for all \(A,B\in\mathcal{F}\). A \(1\)-intersecting family is simply referred to as an intersecting family. The Erd\H{o}s–Ko–Rado Theorem is one of the central results in extremal combinatorics.

\begin{theorem}[Erd\H{o}s–Ko–Rado Theorem \cite{EKR1961}]\label{EKR}
Let \(k\), \(n\) and \(t\) be positive integers such that \(t \leq k \leq n\). If \(\mathcal{F}\subseteq\binom{[n]}{k}\) is \(t\)-intersecting, then there is a constant $n_0(k,t)$ such that for \(n\geq n_{0}(k,t)\), the following inequality holds. \[|\mathcal{F}|\leq\binom{n - t}{k - t}.\]
\end{theorem}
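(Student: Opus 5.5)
We prove the Erd\H{o}s--Ko--Rado bound for $t$-intersecting families by the classical route: reduce to shifted families, then estimate the shifted family through its generating sets, counting how many sets can lie outside the star $\{F: [t]\subseteq F\}$.

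\textbf{Reduction to shifted families.} For $1\le i<j\le n$, the shift $S_{ij}$ replaces $j$ by $i$ in each $F\in\mathcal{F}$ with $j\in F$, $i\notin F$, provided $(F\setminus\{j\})\cup\{i\}\notin\mathcal{F}$. The standard facts are that $S_{ij}$ preserves $|\mathcal{F}|$ and preserves the $t$-intersecting property. For the latter, take $A,B$ in the shifted family and check the cases where exactly one set, or both, were changed; in each case $|A\cap B|\ge t$ is recovered from an original pair. The quantity $\sum_{F\in\mathcal{F}}\sum_{x\in F}x$ strictly decreases with every effective shift, so repeated shifting stops at a family that is stable under all $S_{ij}$. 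From now on we may therefore assume $\mathcal{F}$ is shifted.

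\textbf{Key structural claim.} In a shifted $t$-intersecting family, every $F\in\mathcal{F}$ satisfies
\[
|F\cap[2k-t]|\ge k.
\]
This is the step I expect to be the main obstacle. The plan is to argue by contradiction.
\begin{itemize}
\item Suppose $F$ fails the claim, and choose such an $F$.
\item Use shiftedness to produce a second member $F'$ of $\mathcal{F}$: keep the elements of $F$ lying in $[2k-t]$, and move the elements of $F$ outside $[2k-t]$ into positions of $[2k-t]$ disjoint from $F$.
\item Arrange the moves so that $|F\cap F'|<t$, contradicting the $t$-intersecting property.
\end{itemize}
The delicate part is the counting that shows there are enough free positions to push $|F\cap F'|$ below $t$. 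If this direct argument gets stuck, I would fall back on the Frankl--Wilson/Ahlswede--Khachatrian \emph{generating set} approach. There one takes the minimal sets $G$ such that every $k$-superset of $G$ lies in $\mathcal{F}$, and shows each such $G$ has size at most $2k-t$ and meets $[|G|]$ heavily.

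\textbf{Counting.} Split $\mathcal{F}=\mathcal{F}_0\cup\mathcal{F}_1$, where $\mathcal{F}_0$ consists of the sets containing $[t]$ and $\mathcal{F}_1$ of the rest. Trivially $|\mathcal{F}_0|\le\binom{n-t}{k-t}$. For $\mathcal{F}_1$ we use the structural claim:
\begin{itemize}
\item each $F\in\mathcal{F}_1$ has at least $k$ of its elements in $[2k-t]$;
\item more precisely, it is determined by a trace on $[2k-t]$ of size at least $k$ that misses some element of $[t]$, together with at most $k-t$ further elements.
\end{itemize}
Hence $|\mathcal{F}_1|\le c(k,t)\binom{n}{k-t-1}$ for a constant $c(k,t)$. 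Combining the two parts,
\[
|\mathcal{F}|\le\binom{n-t}{k-t}-\bigl|\{F\supseteq[t]\}\setminus\mathcal{F}_0\bigr|+c(k,t)\binom{n}{k-t-1}.
\]
To get an exact bound rather than an additive error, this must be sharpened by an injection from $\mathcal{F}_1$ into the missing part of the star. For $F\in\mathcal{F}_1$, the sets containing $[t]$ that fail to intersect $F$ in $t$ elements cannot lie in $\mathcal{F}$. For $n$ large there are of order $\binom{n}{k-t}$ of them per $F$, and a double-counting argument shows the loss exceeds the gain once $n\ge n_0(k,t)$. This yields $|\mathcal{F}|\le\binom{n-t}{k-t}$, which is the statement of Theorem~\ref{EKR}.
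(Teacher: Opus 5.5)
The paper only cites this theorem and gives no proof, so your argument has to stand on its own. It does not, because the key structural claim is false. Since $|F|=k$, the inequality $|F\cap[2k-t]|\ge k$ simply says $F\subseteq[2k-t]$, which would bound every shifted $t$-intersecting family by $\binom{2k-t}{k}$. Yet the star $\{F:[t]\subseteq F\}$ is shifted and $t$-intersecting, and for $k>t$ and $n>2k-t$ it contains $[t]\cup\{n-k+t+1,\dots,n\}$.

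Restricting the claim to $\mathcal{F}_1$ does not save it. The family $\mathcal{F}(n,k,t,1)=\{F:|F\cap[t+2]|\ge t+1\}$ is shifted and $t$-intersecting. It contains $([t+2]\setminus\{1\})\cup X$ for any $(k-t-1)$-set $X$ disjoint from $[2k-t]$. This set misses $1$ and has only $t+1<k$ elements in $[2k-t]$ once $k\ge t+2$.

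Your contradiction scheme cannot work either. The $F'$ you build retains $F\cap[2k-t]$, so $|F\cap F'|<t$ is possible only when $|F\cap[2k-t]|<t$, not when it is merely below $k$. The generating-set fallback does not fill the gap: minimal generating sets have size at most $k$ by definition, so ``size at most $2k-t$'' says nothing. Your counting is also internally inconsistent. A trace of size at least $k$ leaves no room for further elements, whereas the bound $c(k,t)\binom{n}{k-t-1}$ you write is what traces of size at least $t+1$ would give.

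What you need is a correct lemma for shifted families; either of the following works.

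\emph{Frankl's lemma.} Every $F$ in a shifted $t$-intersecting family satisfies $|F\cap[t+2j]|\ge t+j$ for \emph{some} $0\le j\le k-t$. Your claim is the single instance $j=k-t$, which need not be the one that holds. Since $j=0$ means $[t]\subseteq F$, each $F\in\mathcal{F}_1$ has $j\ge1$. Hence $|\mathcal{F}_1|\le\sum_{j\ge1}\binom{t+2j}{t+j}\binom{n}{k-t-j}=O(n^{k-t-1})$.

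\emph{Trace lemma.} Your idea of moving elements into free positions does prove that the traces $\{F\cap[2k-t]:F\in\mathcal{F}\}$ are $t$-intersecting, provided it is applied to a pair. Take $F,G\in\mathcal{F}$ with $|F\cap G\cap[2k-t]|<t$ and $|F\cap G|$ minimal, and pick $x\in F\cap G$ with $x>2k-t$; such an $x$ exists because $|F\cap G|\ge t$. Since $|(F\cup G)\cap[2k-t]|\le 2k-|F\cap G|-1<2k-t$, there is some $y\in[2k-t]\setminus(F\cup G)$. By shiftedness $(G\setminus\{x\})\cup\{y\}\in\mathcal{F}$, and together with $F$ it gives a pair of the same kind with smaller intersection, a contradiction. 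Now either some trace has exactly $t$ elements, in which case every member contains it and $\mathcal{F}$ lies in a star. Or every trace has at least $t+1$ elements, and then $|\mathcal{F}|\le\sum_{m\ge t+1}\binom{2k-t}{m}\binom{n}{k-m}=O(n^{k-t-1})$.

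With either lemma in place, your final loss-versus-gain comparison is sound, and it needs only one $F\in\mathcal{F}_1$. The star members whose part outside $[t]$ avoids $F$ number at least $\binom{n-k-t}{k-t}$. Each meets $F$ in at most $t-1$ elements, so all are excluded from $\mathcal{F}$, and this loss outweighs the $O(n^{k-t-1})$ gain for large $n$.
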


Let \(k, t\) be fixed positive integers with \(t \leq k\). Denote by \(N_{0}(k,t)\) the smallest possible value of \(n_{0}(k,t)\) in Theorem \ref{EKR}. For the case \(t = 1,\) it was shown in \cite{EKR1961} that \(N_{0}(k,1)=2k\). For \(t\geq 1,\) the value of \(N_{0}(k,t)\) is given by \(N_{0}(k,t)=(t + 1)(k - t+1)\), which was established in \cite{F1978} for \(t\geq15\) and in \cite{W1984} for all \(t\).

Let \(r\) be an integer such that \(0\leq r\leq\frac{n - t}{2}\). Define the following \(k\)-uniform family 
\[\mathcal{F}(n,k,t,r):=\left\{A\in\binom{[n]}{k}:|A\cap[t + 2r]|\geq t + r\right\}.\] 
Then it is a \(t\)-intersecting family. Frankl \cite{F1978} conjectured that if \(\mathcal{F}\) is a \(t\)-intersecting subfamily of \(\binom{n}{k}\), then \[|\mathcal{F}|\leq\max_{0\leq r\leq(n - t)/2}|\mathcal{F}(n,k,t,r)|.\]

This conjecture was proved partially by Frankl and Füredi \cite{FF1986}, and then settled completely by Ahlswede and Khachatrian \cite{AK1997}.

\begin{theorem}[Ahlswede and Khachatrian \cite{AK1997}]\label{AK97}
Let \(n, k\) and \(t\) be positive integers such that \(t \leq k \leq n\). Let \(\mathcal{F}\) be a \(t\)-intersecting \(k\)-uniform family of subsets of \([n]\).
\begin{itemize}
\item[(i)] If \((k - t + 1)(2+\frac{t - 1}{r+1})<n<(k - t + 1)(2+\frac{t - 1}{r})\) for some nonnegative integer \(r\), then \(|\mathcal{F}|\leq|\mathcal{F}(n,k,t,r)|\) and the equality holds if and only if \(\mathcal{F}=\mathcal{F}(n,k,t,r)\) up to isomorphism.
\item[(ii)] If \((k - t + 1)(2+\frac{t - 1}{r+1})=n\) for some nonnegative integer \(r\), then \(|\mathcal{F}|\leq|\mathcal{F}(n,k,t,r)| = |\mathcal{F}(n,k,t,r + 1)|\) and the equality holds if and only if \(\mathcal{F}=\mathcal{F}(n,k,t,r)\) or \(\mathcal{F}(n,k,t,r + 1)\) up to isomorphism.
\end{itemize}
\end{theorem}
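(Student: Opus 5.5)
We prove the Ahlswede–Khachatrian Complete Intersection Theorem by reducing to left-compressed (shifted) families and then using generating sets.

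\textbf{Step 1: shifting.} For $i<j$ let $S_{ij}$ be the usual shift. It replaces $j$ by $i$ in a member $F$ when $j\in F$, $i\notin F$, and the resulting set is not already in $\mathcal{F}$. Each $S_{ij}$ preserves $|\mathcal{F}|$ and preserves the $t$-intersecting property, so we may assume $\mathcal{F}$ is left-compressed and maximal. For uniqueness we must also check that if the shifted family is some $\mathcal{F}(n,k,t,r)$, then so was the original up to isomorphism. The standard route is to show that an extremal family is invariant under shifts up to a permutation of $[n]$. We do this by tracking each shift step: when $\mathcal{F}$ is $t$-intersecting, $S_{ij}(\mathcal{F})=\mathcal{F}(n,k,t,r)$ and $|\mathcal{F}|=|\mathcal{F}(n,k,t,r)|$, we argue that $\mathcal{F}$ is itself $\mathcal{F}(n,k,t,r)$ with $i$ and $j$ possibly swapped.

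\textbf{Step 2: generating sets.} For a left-compressed maximal $\mathcal{F}$, let $g(\mathcal{F})$ be its set of minimal elements in $\bigcup_{s\le k}\binom{[n]}{s}$, so that $\mathcal{F}=\{F:\exists G\in g(\mathcal{F}),\,G\subseteq F\}$. Put $s^+(\mathcal{F})=\max\{\max G: G\in g(\mathcal{F})\}$. Since every $G\in g(\mathcal{F})$ lies in $[s^+]$, the family is determined by its trace on $[s^+]$.

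\textbf{Step 3: pushing pairs (the key step).} Suppose $s=s^+>t+2r$ is not in the optimal range. Let $g^*=\{G\in g(\mathcal{F}):\max G=s\}$, and split $g^*$ by size into $g_i^*$, the members of $g^*$ of size $i$. Because $\mathcal{F}$ is $t$-intersecting and compressed, two sets $G\in g_i^*$ and $G'\in g_{i'}^*$ with $i+i'=s+t$ intersect in exactly $t$ elements, one of which is $s$. We then compare $\mathcal{F}$ with two modified families:
\begin{itemize}
\item $\mathcal{F}_1$, obtained by deleting the sets generated only by $g_i^*$ and adding the sets generated by $\{G\setminus\{s\}:G\in g_{i'}^*\}$;
\item $\mathcal{F}_2$, obtained by the symmetric operation.
\end{itemize}
Both are again $t$-intersecting. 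Counting through the bijections $G\mapsto G\setminus\{s\}$ and $G\mapsto (G\setminus\{s\})\cup\{x\}$ gives the size changes as
\[
|\mathcal{F}_1|-|\mathcal{F}|=|g_{i'}^*|\binom{n-s}{k-i'+1}-|g_i^*|\binom{n-s}{k-i},
\]
and symmetrically for $\mathcal{F}_2$. Under the hypothesis on $n$ in case (i), at least one of these differences is positive whenever $s$ exceeds the threshold, contradicting maximality. So $s^+$ can be reduced until $\mathcal{F}$ is contained in $\{F:|F\cap[s]|\ge(s+t)/2\}$. A symmetric argument, which increases $s^-$, handles families with $s^+<t+2r$.

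This step is the main obstacle. The binomial inequalities have to be arranged so that the two sides of the interval in (i) force movement in exactly the right direction. In the boundary case (ii) both differences can vanish, and tracking this is what yields the two extremal families.

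\textbf{Step 4: finish.} Once $g(\mathcal{F})\subseteq\binom{[t+2r]}{t+r}$, we compare $|\mathcal{F}(n,k,t,r)|$ with its neighbours $r\pm1$. The ratio of consecutive values crosses $1$ exactly at $n=(k-t+1)(2+\frac{t-1}{r+1})$. This gives the maximum in (i) and the tie in (ii), and together with the uniqueness tracking from Step 1 it completes the characterization.
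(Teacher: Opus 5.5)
The paper gives no proof of this theorem. It is quoted from \cite{AK1997} as background, and only the generating-set facts are used later, as Lemma~\ref{AKGS}. So your outline has to stand on its own. Its shape (shifting, generating sets, pushing at the top level $s$) is that of Ahlswede and Khachatrian, but it has a genuine gap: it omits the one case that actually uses the hypothesis on $n$.

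\textbf{The middle-layer case is missing.} Your operations $\mathcal{F}_1,\mathcal{F}_2$ are $t$-intersecting only when $i\neq i'$. The correct reason is Lemma~\ref{AKGS}(iv): members of $g^*$ that meet in exactly $t$ points have sizes summing to $s+t$ and union $[s]$. Your claim that every pair with $i+i'=s+t$ meets in exactly $t$ points is false. More importantly, for $i\neq i'$ the threshold cannot appear at all. If both moves were non-improving, multiplying the two conditions would give
$(n-k+t-i)(n-s-k+i)\le (k-i+1)(k-s-t+i+1)$,
yet each factor on the left exceeds its partner on the right by $n-2k+t-1$. So for every $n>2k-t+1$ (which covers all of case (i)) one of your moves strictly gains, whatever $r$ is. Consequently an optimal compressed family must have $s+t$ even and $g^*(\mathcal{F})=g^*_{(s+t)/2}(\mathcal{F})$. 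That is exactly the configuration of $\mathcal{F}(n,k,t,r+1)$. There your two operations coincide and fail: two middle-layer sets meeting in exactly $t$ points exist by Lemma~\ref{AKGS}(iv), and their truncations meet in only $t-1$. Hence Step~3 never uses the interval in (i) and does not produce the tie in (ii). Step~4 cannot fill the hole either: comparing $\mathcal{F}(n,k,t,r)$ with $\mathcal{F}(n,k,t,r\pm1)$ says nothing about families not of that form.

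\textbf{A move that handles it.} Let $\ell=(s+t)/2$ and fix $j\in[s-1]$. Truncate (remove $s$ from) the members of $g^*_\ell$ avoiding $j$, and delete those containing $j$. This is $t$-intersecting, because two members meeting in exactly $t$ points cover $[s]$ and so cannot both avoid $j$. Its gain is
$|\{E:j\notin E\}|\binom{n-s}{k-\ell+1}-|\{E:j\in E\}|\binom{n-s}{k-\ell}$.
Averaging over $j$ gives
$|g^*_\ell|\bigl((s-\ell)\binom{n-s}{k-\ell+1}-(\ell-1)\binom{n-s}{k-\ell}\bigr)$.
For $s=t+2m$ this is positive if and only if $n>(k-t+1)(2+\frac{t-1}{m})$. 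This is where the threshold comes from; it excludes $s\ge t+2r+2$, and the case of zero gain is what produces the second optimum in (ii).

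\textbf{Further gaps.}
\begin{itemize}
\item The lower end of the interval is also left open. Nothing rules out $s^+<t+2r$ or generators of size $<t+r$: your ``$s^-$'' is never defined and no pulling move is given.
\item The shift-tracking in Step~1 is asserted, not proved, and it must use $t\ge 2$. For $t=1$ and $n=2k$ the hypothesis of (ii) holds for every $r$, and the uniqueness claim is false. For example, $\binom{[5]}{3}$ is optimal in $\binom{[6]}{3}$ but is isomorphic to neither $\mathcal{F}(6,3,1,0)$ nor $\mathcal{F}(6,3,1,1)$.
\end{itemize}
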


Let \(\mathcal{A}\) and \(\mathcal{B}\) be two families of subsets of \([n]\). They are called {\em cross-\(t\)-intersecting} if \(|A\cap B|\geq t\) for all \(A\in\mathcal{A}\) and \(B\in\mathcal{B}\). In the special case where \(t = 1,\) we say that \(\mathcal{A}\) and \(\mathcal{B}\) are {\em cross-intersecting}. The cross-\(t\)-intersecting property can be seen as a natural generalization of the \(t\)-intersecting property, as the two properties coincide when \(\mathcal{A}=\mathcal{B}\). 

Now, consider cross-\(t\)-intersecting families \(\mathcal{A}\subseteq \binom{[n]}{k}\) and \(\mathcal{B}\subseteq \binom{[n]}{\ell}\). We say that these families are {\em maximum} if there do not exist cross-\(t\)-intersecting families \(\mathcal{A}_{1}\subseteq \binom{[n]}{k}\) and \(\mathcal{B}_{1}\subseteq \binom{[n]}{\ell}\) such that  \(|\mathcal{A}||\mathcal{B}|<|\mathcal{A}_{1}||\mathcal{B}_{1}|\). Similarly, we say that \(\mathcal{A}\) and \(\mathcal{B}\) are {\em maximal} if there do not exist cross-\(t\)-intersecting families \(\mathcal{A}\subseteq \mathcal{A}_{1} \subset {[n]\choose k}$ and $\mathcal{B}\subseteq \mathcal{B}_{1} \subset {[n]\choose \ell}\) such that  \(|\mathcal{A}||\mathcal{B}|<|\mathcal{A}_{1}||\mathcal{B}_{1}|\). 

The study of possible maximum sizes of pairs of cross-intersecting families is one of the central problems of extremal set theory. In 1986, Pyber \cite{P1986} employed the method of cyclic permutations to extend the Erd\H{o}s–Ko–Rado Theorem to the cross-intersecting case.
For \(k> \ell,\) the lower bound on \(n\) given by Pyber \cite{P1986} turns out not to be sharp. In 1989, Matsumoto and Tokushige \cite{MT1989} derived a sharper result.

\begin{theorem}[Matsumoto and Tokushige \cite{MT1989}]
Let \(n, k\) and  \(\ell\) be positive integers such that \(n\geq 2k \geq 2\ell\). If \(\mathcal{A}\subseteq\binom{[n]}{k}\) and \(\mathcal{B}\subseteq\binom{[n]}{\ell}\) are cross-intersecting, then \(|\mathcal{A}||\mathcal{B}|\leq\binom{n - 1}{k - 1}\binom{n - 1}{\ell - 1}\). Moreover, the equality holds if and only if \(\mathcal{A}=\left\{A\in\binom{[n]}{k}:x\in A\right\}\) and \(\mathcal{B}=\left\{B\in\binom{[n]}{\ell}:x\in B\right\}\) for some fixed element \(x\in[n]\).
\end{theorem}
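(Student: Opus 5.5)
The plan is to prove the theorem of Matsumoto and Tokushige by a shifting reduction followed by a counting argument over cyclic orderings, in the spirit of Pyber's method, refined so that it handles $k>\ell$.

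\textbf{Step 1: reduce to shifted families.} Apply the standard $(i,j)$-shifts simultaneously to $\mathcal{A}$ and $\mathcal{B}$. Simultaneous shifting preserves the sizes of both families and preserves the cross-intersecting property. So we may assume both families are left-shifted.

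\textbf{Step 2: a Katona-type cyclic count.} Fix a cyclic ordering $\sigma$ of $[n]$. Let $a_\sigma$ be the number of members of $\mathcal{A}$ that are intervals (arcs) of $\sigma$, and let $b_\sigma$ be the corresponding number for $\mathcal{B}$.

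The key local claim is a bound on arcs. Suppose $n\ge 2k\ge 2\ell$ and we have a family of $k$-arcs and a family of $\ell$-arcs that are cross-intersecting. Then one of two things holds:
\begin{itemize}
\item $a_\sigma=0$ or $b_\sigma=0$; or
\item the product satisfies $a_\sigma b_\sigma\le k\ell$, with $a_\sigma+b_\sigma$ suitably controlled.
\end{itemize}
The bound $a_\sigma\le \ell$ when $b_\sigma\neq 0$ is the usual Katona argument. One fixed $\ell$-arc meets at most $k+\ell-1$ of the $k$-arcs, and these split into disjoint pairs. Symmetrically, $b_\sigma\le k$.

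\textbf{Step 3: average over orderings.} Averaging over $\sigma$ gives
\[\frac{|\mathcal{A}|}{\binom nk}=\mathbb{E}\,\frac{a_\sigma}{n},\qquad \frac{|\mathcal{B}|}{\binom n\ell}=\mathbb{E}\,\frac{b_\sigma}{n}.\]
However, an inequality between expectations of a product does not directly bound the product of expectations. This is exactly why Pyber's bound is not sharp for $k>\ell$.

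\textbf{Step 4 (main obstacle): bound the product.} Instead, I would use the shifted structure with the Kruskal–Katona theorem, in the following steps.
\begin{enumerate}
\item If $|\mathcal{A}|\le\binom{n-1}{k-1}$ and $|\mathcal{B}|\le\binom{n-1}{\ell-1}$, we are done. So assume, without loss of generality, that $|\mathcal{A}|>\binom{n-1}{k-1}$.
\item Cross-intersection means that $\mathcal{B}$ avoids the $\ell$-shadow of the complement family $\{[n]\setminus A: A\in\mathcal{A}\}$. This complement family consists of $(n-k)$-sets, and $n-k\ge \ell$.
\item Write $|\mathcal{A}|$ in the Lovász form $\binom{x}{n-k}$, with $x$ real.
\item Kruskal–Katona then gives $|\mathcal{B}|\le \binom n\ell-\binom{x}{\ell}$.
\item Maximize $\binom{x}{n-k}\bigl(\binom n\ell-\binom x\ell\bigr)$ over $x\in[n-1,n]$. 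We need this to be at most $\binom{n-1}{k-1}\binom{n-1}{\ell-1}$, which is its value at $x=n-1$.
\end{enumerate}

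The hardest step is item 5, the one-variable calculus inequality. Its ratio of derivatives involves $\binom{x}{n-k}/\binom{x}{\ell}$-type terms, and the inequality is exactly where $n\ge 2k$ is needed. I would first try to prove that the logarithmic derivative of the product is negative on $[n-1,n]$. The Lovász form is the right tool here because it makes $\binom{x}{m}$ log-concave in $x$.

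\textbf{Step 5: characterize equality.} Equality forces $x=n-1$, that is, $|\mathcal{A}|=\binom{n-1}{k-1}$. The equality case of Kruskal–Katona (due to Füredi–Griggs / Mörs) then forces the complement family to be a full star's complement, so $\mathcal{A}$ is a star at some element $x$. Cross-intersection then gives $\mathcal{B}$ as the star at the same $x$.

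Finally, we must check that shifting did not create equality from a non-star pair. The argument is the standard one: if the shifted pair consists of stars, then reversing a single shift keeps them stars about some element. This holds because the product is maximum and $n>k+\ell$ or $n\ge 2k$.
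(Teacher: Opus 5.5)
The paper gives no proof of this statement. It is quoted from Matsumoto and Tokushige as background, so your proposal has to stand on its own, and it has a genuine gap at the step you yourself call the main obstacle. Steps 2--3 are discarded, and their claim $a_\sigma\le\ell$ whenever $b_\sigma\neq 0$ is false anyway: a single $\ell$-arc in $\mathcal{B}$ allows all $k+\ell-1$ $k$-arcs that meet it. So everything rests on item 5, the bound $\binom{x}{n-k}\bigl(\binom{n}{\ell}-\binom{x}{\ell}\bigr)\le\binom{n-1}{k-1}\binom{n-1}{\ell-1}$ for $x\in[n-1,n]$.

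This inequality is false for large $n$. The logarithmic derivative of the left side at $x=n-1$ is
\[
\sum_{j=k}^{n-1}\frac{1}{j}-\frac{n-\ell}{\ell}\sum_{j=n-\ell}^{n-1}\frac{1}{j}\ \ge\ \ln\frac{n}{k}-1,
\]
because the second sum is at most $\ell/(n-\ell)$. So for every $n>ek$, in particular throughout the range $n\ge 3k$ relevant to this paper, the function rises above its value at $x=n-1$.

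Concretely, take $k=\ell=2$, $n=10$ and $|\mathcal{A}|=11$. The Lov\'asz parameter defined by $\binom{x}{8}=11$ is $x\approx 9.11$. You then only get $|\mathcal{B}|\le 45-\binom{x}{2}\approx 8.05$, hence $|\mathcal{A}||\mathcal{B}|\le 11\cdot 8=88>81$. In truth $|\mathcal{B}|\le 1$.

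So the failure is not tied to $n$ being close to $2k$, as you expected; it happens for large $n$, where the Lov\'asz relaxation is far too lossy. Your ``without loss of generality'' also hides a real second case, $|\mathcal{B}|>\binom{n-1}{\ell-1}$. It is not symmetric to the first, since $k\ne\ell$, and there the analogous derivative is already positive once $n>e\ell$.

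The missing ingredient is the exact (cascade) form of Kruskal--Katona, or equivalently Hilton's lemma that $\mathcal{A}$ and $\mathcal{B}$ may be replaced by initial segments of the lexicographic order. In the example above it gives $|\partial_2\{[n]\setminus A: A\in\mathcal{A}\}|\ge\binom{9}{2}+\binom{7}{1}+\binom{6}{0}=44$, so $|\mathcal{B}|\le 1$. Even after this reduction you still have to carry out a genuinely discrete optimisation of $|\mathcal{A}||\mathcal{B}|$ over $|\mathcal{A}|$, and that is where the real work of the theorem lies. A one-variable calculus bound cannot replace it.

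Step 5 has a separate problem. When $n=2k=2\ell$ you have $n-k=\ell$, so the ``shadow'' is the complement family itself and Kruskal--Katona yields no uniqueness. In fact the uniqueness claim as quoted is false there. For $n=4$, the pair $\mathcal{A}=\mathcal{B}=\{\{1,2\},\{1,3\},\{2,3\}\}$ attains $9=\binom{3}{1}^2$ without being stars, so that case must be excluded.

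Finally, the shifting in Step 1 and the ``unshifting'' argument are unnecessary, since Kruskal--Katona applies to arbitrary families.
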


For the general cross-\(t\)-intersecting case, Tokushige \cite{T2010} employed a combinatorial approach to derive a lower bound on \(n\geq 2tk\) for cross-\(t\)-intersecting families, and further improved this lower bound to \(\frac{k}{n}<1-\frac{1}{\sqrt[t]{2}}\) by using the eigenvalue method in 2013 \cite{T2013}. 
In the same paper \cite{T2013}, Tokushige conjectured that for \(n\geq(t + 1)(k - t + 1)\) with \(k\geq t\), if \(\mathcal{A}\subseteq\binom{[n]}{k}\) and \(\mathcal{B}\subseteq\binom{[n]}{k}\) are cross-\(t\)-intersecting, then \(|\mathcal{A}||\mathcal{B}|\leq\binom{n - t}{k - t}^2\). Moreover, the equality holds if and only if \(\mathcal{A}=\mathcal{B}=\left\{F\in\binom{[n]}{k}:T\subset F\right\}\) for some \(t\)-subset \(T\) of \([n]\).
In 2014, Frankl, Lee, Siggers and Tokushige \cite{FLST2014} verified this conjecture under the stronger assumptions that \(t\geq14\) and \(n\geq(t + 1)k\). In the same year, Borg \cite{B2014} independently confirmed the conjecture for sufficiently large \(n\). More recently, Zhang and Wu \cite{ZW2025} used shifting techniques and generating set analysis to confirm the conjecture for $t\geq 3.$ Also in 2025, Tanaka and Tokushige \cite{TT2025} employed a semidefinite programming approach to resolve the case $t=2.$
 
 \begin{theorem}[Zhang and Wu \cite{ZW2025}, Tanaka and Tokushige \cite{TT2025}]\label{T13}
 Let \(n, k, t\) be positive integers such that \(n \geq k \geq t\) and \(n\geq(t + 1)(k - t + 1)\). If \(\mathcal{A}\subseteq\binom{[n]}{k}\) and \(\mathcal{B}\subseteq\binom{[n]}{k}\) are cross-\(t\)-intersecting, then \(|\mathcal{A}||\mathcal{B}|\leq\binom{n - t}{k - t}^2\). 
 \end{theorem}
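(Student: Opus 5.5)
The plan is to reduce to shifted families and then compare against the Ahlswede–Khachatrian families $\mathcal{F}(n,k,t,r)$ of Theorem \ref{AK97}. First, apply the standard $(i,j)$-shifts $S_{ij}$ simultaneously to $\mathcal{A}$ and $\mathcal{B}$. Shifting preserves $|\mathcal{A}|$ and $|\mathcal{B}|$, and (by the usual argument) preserves the cross-$t$-intersecting property. So we may assume both families are left-shifted. We may also assume they are maximal, since enlarging either family only increases the product.

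For shifted families I would use the generating-set description. Let $\mathcal{G}(\mathcal{A})$ be the minimal sets $G$ such that every $k$-set containing $G$ lies in $\mathcal{A}$, and similarly $\mathcal{G}(\mathcal{B})$. Let $s$ be the largest element occurring in a generator of either family. Cross-$t$-intersection together with shiftedness gives the standard structural fact: for $G\in\mathcal{G}(\mathcal{A})$ and $H\in\mathcal{G}(\mathcal{B})$, we have $|G\cap H|\ge t$, and in fact $|G\cap H\cap[s-1]|\ge t-1$ whenever $s\in G\cap H$ is extremal. If $s\le t+2r$ for a suitable $r$, the families are determined by their traces on $[s]$, and the problem reduces to a finite optimisation on that ground set.

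The key step is the Ahlswede–Khachatrian "pushing" comparison, adapted to products. Suppose $s$ is large. Consider the generators of $\mathcal{A}$ with $|G\cap[s]|$ extremal and containing $s$, and likewise for $\mathcal{B}$. Remove $s$ from these generators on one side and discard the now-violating generators on the other side, which yields new cross-$t$-intersecting pairs $(\mathcal{A}',\mathcal{B})$ or $(\mathcal{A},\mathcal{B}')$. Counting the gained and lost sets gives a ratio. Under $n\ge (t+1)(k-t+1)$, this ratio shows that one of the two modifications does not decrease $|\mathcal{A}||\mathcal{B}|$; I would set this up via the inequality $xy\le \max(x'y, xy')$ obtained from linearity of the gains and losses in a parameter. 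Iterating, $s$ decreases until the generators lie in $[t]$, giving $\mathcal{A}=\mathcal{B}=\{F: [t]\subseteq F\}$ and hence the bound $\binom{n-t}{k-t}^2$.

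The main obstacle is this pushing step for products. Unlike the single-family case, the gain on one side interacts multiplicatively with the loss on the other, and near the threshold $n=(t+1)(k-t+1)$ the estimates become tight. For $t\ge 3$ I expect careful counting of the generators (splitting by $|G\cap[s]|$) to suffice. For small $t$, in particular $t=2$, the combinatorial margins may fail, and I would fall back on a spectral or semidefinite bound in the Johnson scheme in the style of Tokushige's eigenvalue method: bound $\sqrt{|\mathcal{A}||\mathcal{B}|}$ by a weighted Hoffman-type ratio using a feasible dual certificate.
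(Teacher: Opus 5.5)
The paper does not prove this statement. It quotes it, crediting $t\ge 3$ to Zhang--Wu (shifting and generating sets) and $t=2$ to Tanaka--Tokushige (semidefinite programming). Your outline follows that split, but its combinatorial core rests on a claim that is false.

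A pushing move changes \emph{both} families. Deleting $s$ from the sets in $g_i^*(\mathcal{A})$ enlarges $\mathcal{A}$ and forces removing $\mathcal{D}(g^*_{s+t-i}(\mathcal{B}))$ from $\mathcal{B}$ (Lemma \ref{lemkl}(iii)). The partner sets come from Lemma \ref{lemkl}(ii), not from the vacuous condition $|G\cap H\cap[s-1]|\ge t-1$. So the two candidates are $(a+\alpha)(b-\beta)$ and $(a-\gamma)(b+\delta)$, where $a=|\mathcal{A}|$, $b=|\mathcal{B}|$, $\alpha,\gamma$ count the sets gained/lost by $\mathcal{A}$, and $\beta,\delta$ count those lost/gained by $\mathcal{B}$. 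No linearity forces either product to be $\ge ab$, and in fact the claim fails.

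Take $\mathcal{A}=\mathcal{B}=\mathcal{F}(n,k,t,1)$, so $s=t+2$ and $i=t+1$. Both moves lead to the pair $\{A:|A\cap[t+1]|\ge t\}$, $\{B:[t+1]\subseteq B\}$. Its product is strictly smaller than $|\mathcal{F}(n,k,t,1)|^2$ for $n$ near $(t+1)(k-t+1)$. Concretely, for $t=2$, $k=3$, $n=7$ we have $|\mathcal{F}(7,3,2,1)|^2=16$, both moves give $13$, yet $\binom{5}{1}^2=25$. Your iteration therefore stalls at a non-optimal pair, and ``$s$ decreases until the generators lie in $[t]$'' is never reached.

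What is missing is a separate treatment of such local maxima. The configurations $g(\mathcal{A})=g(\mathcal{B})=\binom{[t+2r]}{t+r}$, i.e.\ $\mathcal{A}=\mathcal{B}=\mathcal{F}(n,k,t,r)$, must be identified structurally. They must then be compared with $\binom{n-t}{k-t}^2$ directly through the single-family bound (Theorem \ref{AK97}, or Wilson's $N_0(k,t)=(t+1)(k-t+1)$). This is exactly the role of Case 3 and of the exceptional pairs $(s,i)\in\{(6,4),(8,5)\}$ in Lemma \ref{NMP1}. Those pairs correspond to $\mathcal{F}(n,k,2,2)$ and $\mathcal{F}(n,k,2,3)$, and the paper disposes of them at the end of Case 4 by forcing $g(\mathcal{A})=g(\mathcal{B})=\binom{[s]}{i}$ and comparing sizes.

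More generally, if both moves fail you only obtain $\alpha\delta\le\beta\gamma(1+\alpha/a)(1+\delta/b)$. The Ahlswede--Khachatrian binomial ratio $\alpha\delta/(\beta\gamma)$ does not see the threshold: for $s=t+2$, $i=t+1$ it equals $\bigl(\frac{n-k-1}{k-t}\bigr)^2$, which is already $>1$ once $n>2k-t+1$. So the real work is controlling the factor $(1+\alpha/a)(1+\delta/b)$ with information about $a$ and $b$, and your sketch does not address it.

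Finally, for $t=2$ an unspecified ``feasible dual certificate'' is not an argument. Tokushige's eigenvalue method only reaches $k/n<1-2^{-1/t}$, which is about $n>3.41k$ for $t=2$. At $n=3(k-1)$ the bound is also attained by $\mathcal{F}(n,k,2,1)$ (Theorem \ref{TC13}(2)), so any certificate must be exactly tight there. Producing it is the whole content of Tanaka--Tokushige.
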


In 2013, Tokushige \cite{T2013} proposed a conjecture on a lower bound on \(n\) for non-uniform cross-\(t\)-intersecting families: For positive integers \(k\geq \ell\geq t\geq 1\) and \(n\geq (t+1)(k-t+1)\), if two families \(\mathcal{A}\subseteq\binom{[n]}{k}\) and \(\mathcal{B}\subseteq\binom{[n]}{\ell}\) are cross-\(t\)-intersecting, then \(|\mathcal{A}||\mathcal{B}|\leq\binom{n - t}{k - t}\binom{n - t}{\ell - t}\). In 2016, Borg \cite{B2016} verified the conjecture under the stronger assumptions that \(n\geq (t + 4)(k-t)+\ell-1\). In 2025, He et al. \cite{HLWZ2026} employed shifting techniques and generating set analysis to confirm this conjecture for $t\geq 3.$ More recently, Chen et al. \cite{CLWZ2025} also used the generating set method to verify the conjecture for $t=2$ with $n\geq 3.38k$. Recently, it has been completely solved by the first two authors and Xiang \cite{BJX2026}. 

\begin{theorem}[He et al. \cite{HLWZ2026}, Chen et al. \cite{CLWZ2025}, Bao et al. \cite{BJX2026}]\label{TC13}
Let $k,\ell,t$ and $n$ be positive integers with \(k\geq \ell\geq t\) and \(n\geq (t+1)(k-t+1)\). Suppose that two families \(\mathcal{A}\subseteq\binom{[n]}{k}\) and \(\mathcal{B}\subseteq\binom{[n]}{\ell}\) are cross-\(t\)-intersecting. Then \(|\mathcal{A}||\mathcal{B}|\leq\binom{n - t}{k - t}\binom{n - t}{\ell - t}\). Moreover, the equality holds if and only if, up to isomorphism, one of the
	following holds:
\begin{itemize}
	\item[{\rm (1)}]\(\mathcal{A}=\left\{A \in \binom{[n]}{k} : T \subset A \right\}\) and \(\mathcal{B}=\left\{B \in \binom{[n]}{\ell} : T \subset B \right\}\) for some \(t\)-element subset \(T\);
	\item[{\rm (2)}] \(n=(t+1)(k-t+1),\ k=\ell\) and \(\mathcal{A}=\mathcal{B}=\left\{A \in \binom{[n]}{k} :  |A \cap T| \geq t+1\right\}\) for some \((t+2)\)-element subset \(T\).
\end{itemize}	
	
\end{theorem}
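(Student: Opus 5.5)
This theorem is quoted from \cite{HLWZ2026,CLWZ2025,BJX2026}, so it is not proved anew here; the plan below is the one I would follow to reprove it with the shifting and generating-set method those papers use.

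\textbf{Reduction to shifted families.} Let $\mathcal{A},\mathcal{B}$ be cross-$t$-intersecting with $|\mathcal{A}||\mathcal{B}|$ maximum. The shifts $S_{ij}$ preserve the sizes of both families and the cross-$t$-intersecting property. Hence we may assume both families are left-compressed, and recover the general equality case at the end by the standard argument that shifting cannot turn a non-extremal configuration into an extremal one. We may also assume the pair is maximal, so that each family is an up-set inside its layer.

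\textbf{Generating-set analysis.} For a shifted $\mathcal{A}$, take its minimal generating family $\mathcal{G}(\mathcal{A})$. Shiftedness gives the key property: every generator $G$ has $\max G = s$ for some $s$, and every $A\in\mathcal{A}$ contains some generator. Define $s(\mathcal{A})$ and $s(\mathcal{B})$ to be the largest element of a generator of $\mathcal{A}$ and of $\mathcal{B}$, respectively.

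\emph{Case $s=t$.} Here both families lie in the full $t$-star of $[t]$, which gives bound (1).

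\emph{Case of larger $s$.} A generator $G$ of $\mathcal{A}$ with maximal element $s$ forces every $B\in\mathcal{B}$ with $s\notin B$ to still meet $G\setminus\{s\}$ in at least $t$ elements after replacing $s$; the usual lemma gives $|B\cap[s-1]|\ge t$ or a similar constraint. Next, remove all generators with maximum $s$, giving a new pair $(\mathcal{A}',\mathcal{B}')$ that is still cross-$t$-intersecting (with the adjusted $\mathcal{B}'$). Then compare $|\mathcal{A}||\mathcal{B}|$ with $|\mathcal{A}'||\mathcal{B}'|$ through the counts of sets with a fixed trace on $[s]$. These counts are ratios of binomials such as $\binom{n-s}{k-|G|}$. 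The aim is to show the product strictly increases when we pass to the smaller $s$, whenever $n\ge (t+1)(k-t+1)$ and $s\ge t+2$, or when $s=t+1$ with $n$ above the threshold. Iterating this brings $s$ down to $t$.

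\textbf{Main obstacle.} The boundary case $s=t+1$, i.e. the trace on $[t+2]$, is the delicate step. There the candidate is the Frankl family $\mathcal{F}(n,k,t,1)$ type configuration, and at $n=(t+1)(k-t+1)$ with $k=\ell$ it ties with the star, producing case (2). I would handle this by parametrizing the traces of $\mathcal{A}$ and $\mathcal{B}$ on $[t+2]$ (sets of size $t$ versus $t+1$) and reducing to an explicit inequality in $\binom{n-t-2}{\cdot}$. I would then check that the inequality holds strictly for $k>\ell$, with equality only when $k=\ell$ and $n=(t+1)(k-t+1)$. For $t=2$ with small $n/k$ the crude estimates fail, and that is exactly where \cite{BJX2026} needed a finer case analysis. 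I expect the estimate for $s(\mathcal{A})\ne s(\mathcal{B})$ with $k>\ell$ near $n=3(k-1)$ to be hardest. My first attempt there would be to bound the $\mathcal{B}$-side loss using the Theorem~\ref{AK97} type bound on the trace.
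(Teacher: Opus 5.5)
\textbf{There is a genuine gap: your central monotone step is false.} The paper does not prove this theorem; it quotes it. The $t=2$ machinery behind it is nevertheless visible in Lemma~\ref{NMP1}, which the paper extracts from \cite{BJX2026}, and in Case~4 of the proof of Theorem~\ref{MainResult}.

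You propose to delete from $\mathcal{A}$ all generators containing $s$, enlarge $\mathcal{B}$ accordingly, show that the product strictly increases whenever $s\ge t+2$, and iterate down to $s=t$. Here is a counterexample:
\begin{itemize}
\item Take $t=2$, $k=\ell=10$, $n=27=(t+1)(k-t+1)$, and $\mathcal{A}=\mathcal{B}=\mathcal{F}(27,10,2,2)$. This is a maximal left-compressed pair with $g(\mathcal{A})=g(\mathcal{B})=\binom{[6]}{4}$, so $s=6$.
\item Its product is $(15\binom{21}{6}+6\binom{21}{5}+\binom{21}{4})^2=942039^2\approx 8.87\cdot 10^{11}$.
\item Your move gives $\mathcal{A}'=\{A:|A\cap[5]|\ge 4\}$. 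The largest family cross-$2$-intersecting with it is $\mathcal{B}'=\{B:|B\cap[5]|\ge 3\}$.
\item The new product is $|\mathcal{A}'||\mathcal{B}'|=399399\cdot 2104839\approx 8.41\cdot 10^{11}$, which is smaller. By symmetry the push in the other direction gives the same value.
\end{itemize}
So this pair is a local maximum for every push of this kind, even though it is far from extremal: the star gives $\binom{25}{8}^2\approx 1.17\cdot 10^{12}$. No monotone iteration can pass through it.

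\textbf{What the method actually needs.} Three ingredients are missing from your plan.
\begin{itemize}
\item One works with a single layer rather than all top generators at once: take $i$ minimal with $g_i^*(\mathcal{A})\neq\emptyset$, paired via Lemma~\ref{lemkl}(ii) with $g^*_{s+t-i}(\mathcal{B})$.
\item One forms both pushes $(\mathcal{A}_1,\mathcal{B}_1)$ and $(\mathcal{A}_2,\mathcal{B}_2)$ of Lemma~\ref{NMP1}. One then proves that $|\mathcal{A}_1||\mathcal{B}_1|\le|\mathcal{A}||\mathcal{B}|$ and $|\mathcal{A}_2||\mathcal{B}_2|\le|\mathcal{A}||\mathcal{B}|$ cannot both hold. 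The direction of the improving push therefore depends on the configuration.
\item The residual configurations where this dichotomy fails are handled by a global comparison. For $t=2$ these are $(s,i)\in\{(6,4),(8,5)\}$, and my example is exactly the case $(6,4)$. There one shows $g_j(\mathcal{A})=g_j(\mathcal{B})=\emptyset$ for $j<i$, identifies the families as $\mathcal{F}(n,k,2,i-2)$ and $\mathcal{F}(n,\ell,2,i-2)$, and compares them directly with $\mathcal{F}(n,k,2,1)$ and $\mathcal{F}(n,\ell,2,1)$.
\end{itemize}
The two-sided dichotomy and the residual step are the substance of the cited proofs.

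\textbf{Two smaller errors.}
\begin{itemize}
\item Your ``main obstacle'' is misplaced. The configuration in (2) has generating sets $\binom{[t+2]}{t+1}$, so it sits at $s=t+2$ in the self-paired layer $i=t+1=s+t-i$, not at $s=t+1$. At $s=t+1$ the only maximal pairs are generated by $\binom{[t+1]}{t}$ and $\{[t+1]\}$, and a one-line computation puts them strictly below the star bound.
\item Your sentence about shifting only restates that shifts preserve sizes. The equality case needs a genuine lemma: if $S_{ij}$ maps a cross-$t$-intersecting pair onto a pair as in (1) or (2), then the original pair was already of that form up to isomorphism. You have not supplied this.
\end{itemize}
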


Two cross-$t$-intersecting families \(\mathcal{A}\subseteq\binom{[n]}{k}\) and \(\mathcal{B}\subseteq\binom{[n]}{\ell}\) are  said to be nontrivial if $|\bigcap_{A\in \mathcal{A}\cup \mathcal{B}}A|<t$.  

Frankl and Kupavskii sharpened the Pyber’s inequality when \(\mathcal{A}\subseteq\binom{[n]}{k}\) and \(\mathcal{B}\subseteq\binom{[n]}{\ell}\) are nontrivial cross-intersecting. To be more precise, they obtained the following theorem of the nontrivial cross-intersecting
case.
\begin{theorem}
[Frankl and Kupavskii \cite{FK2017}] Let $n$ and $k$ be two positive integers with $n > 2k$. 
Suppose that  \(\mathcal{A}\) and \(\mathcal{B}\) are cross-intersecting families of \(\binom{[n]}{k}\). If  \(\mathcal{A}\) and \(\mathcal{B}\) are nontrivial, then
\(|\mathcal{A}||\mathcal{B}|\leq\left (\binom{n - 1}{k - 1}+1\right )\left (\binom{n - 1}{k - 1}-\binom{n-k-1}{k-1}\right )\).
\end{theorem}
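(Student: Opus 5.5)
Since the last statement is the Frankl--Kupavskii theorem for nontrivial cross-intersecting families $\mathcal{A},\mathcal{B}\subseteq\binom{[n]}{k}$ with $n>2k$, I would argue by shifting combined with a Hilton--Milner type analysis. We may assume $\mathcal{A},\mathcal{B}$ are maximal, so each is an up-set within its level with respect to the cross-intersecting condition; that is, $\mathcal{B}=\{B:B\cap A\neq\emptyset\ \forall A\in\mathcal{A}\}$, and symmetrically for $\mathcal{A}$. I would then apply the shifting operators $S_{ij}$ simultaneously to both families. These preserve sizes and the cross-intersecting property. The only danger is that a shift destroys nontriviality, i.e.\ after shifting both families contain a common element $x$. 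The standard remedy is to perform only the shifts that keep the pair nontrivial. If $S_{ij}$ would make $\bigcap(\mathcal{A}\cup\mathcal{B})\ni i$, then every set missing $i$ contains $j$, and we analyze that configuration directly. In this case all sets contain $i$ or $j$, and a direct count shows the product is strictly smaller than the claimed bound for $n>2k$; this can be checked via the $\{i,j\}$-trace decomposition.

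Once we reach a shifted nontrivial pair, we use the structure of shifted families. Nontriviality means some set of $\mathcal{A}\cup\mathcal{B}$ avoids $1$. By shiftedness, it may be taken to be $[2,k+1]$, say in $\mathcal{A}$ (otherwise swap the roles, since the bound is symmetric). Then every $B\in\mathcal{B}$ meets $[2,k+1]$. Partition $\mathcal{B}$ into $\mathcal{B}(1)$, the sets containing $1$, and $\mathcal{B}(\bar 1)$, those avoiding $1$, and similarly split $\mathcal{A}$. The key inequality I would aim for is the following: if $\mathcal{A}(\bar1)$ contains $s\ge1$ sets, then $|\mathcal{B}|\le\binom{n-1}{k-1}-(\text{number of }(k-1)\text{-sets in }[2,n]\text{ missing all of them})$, which is at most $\binom{n-1}{k-1}-\binom{n-k-1}{k-1}$ when $s=1$, plus the contribution of $\mathcal{B}(\bar1)$. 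Meanwhile $|\mathcal{A}|\le\binom{n-1}{k-1}+|\mathcal{A}(\bar1)|$ minus the losses forced by $\mathcal{B}(\bar1)$. The families $\mathcal{A}(\bar 1)$ and $\mathcal{B}(\bar1)$ together with the links $\mathcal{A}(1),\mathcal{B}(1)$ give two cross-intersecting pairs on $[2,n]$: $(\mathcal{A}(\bar1),\mathcal{B}(1))$ and $(\mathcal{A}(1),\mathcal{B}(\bar1))$, with uniformities $k$ and $k-1$. The Kruskal--Katona theorem (or the Hilton--Milner-type bound $|\{G:G\cap F\neq\emptyset\ \forall F\in\mathcal{F}\}|$ being maximized when $\mathcal{F}$ is a single set, given $|\mathcal{F}|$) then bounds the missing sets as a function of $a=|\mathcal{A}(\bar1)|$ and $b=|\mathcal{B}(\bar1)|$.

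The main obstacle is the final optimization. We must show that the resulting function $f(a,b)=(\binom{n-1}{k-1}+a-g(b))(\binom{n-1}{k-1}+b-g(a))$ is maximized at $(a,b)=(1,0)$ or $(0,1)$, where $g(x)$ is the minimum number of $(k-1)$-subsets of $[2,n]$ forced out by $x$ sets. Here $g(1)=\binom{n-k-1}{k-1}$ and $g$ grows quickly. I would first compare against $b=0$, where $\mathcal{B}\subseteq$ star and the product is $(\binom{n-1}{k-1}+a)(\binom{n-1}{k-1}-g(a))$. Using $n>2k$, the increment from increasing $a$ is outweighed by the growth of $g(a)$ times $\binom{n-1}{k-1}$; this is checked via Lov\'asz's form of Kruskal--Katona. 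Then I would show that having both $a,b>0$ is worse by a convexity/AM--GM comparison, since each positive term costs $g(\cdot)\ge\binom{n-k-1}{k-1}\gg 1$ on the other side.
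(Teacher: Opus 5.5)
The paper does not prove this theorem. It only quotes it from Frankl and Kupavskii as background, so your proposal has to stand on its own. It does not: the optimization you set up at what you call the main obstacle is false.

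\textbf{The optimization of $f(a,b)$ fails.}
\begin{itemize}
\item Take $a=b=\binom{n-2}{k-1}$, realized by the family of all $k$-subsets of $[2,n]$ containing $2$. A set $G\in\binom{[2,n]}{k-1}$ is disjoint from some member exactly when $2\notin G$ (this uses $n\ge 2k$). So $g(a)\le\binom{n-2}{k-1}=a$, and since $\binom{n-k-1}{k-1}\ge 1$ you get $f(a,a)\ge\binom{n-1}{k-1}^2>\bigl(\binom{n-1}{k-1}+1\bigr)\bigl(\binom{n-1}{k-1}-\binom{n-k-1}{k-1}\bigr)$.
\item Worse, $a=b=\binom{n-1}{k}$ gives $g=\binom{n-1}{k-1}$ and $f=\binom{n-1}{k}^2$, which exceeds Pyber's bound when $n>2k$.
\end{itemize}

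The first example is just the star at $2$. Your relaxation remembers only $a+b\ge 1$, i.e.\ that not every set contains $1$. That is equivalent to nontriviality only for shifted pairs, and $f$ has forgotten shiftedness. The second example shows that $f$ has also forgotten that $\mathcal{A}(\bar 1)$ and $\mathcal{B}(\bar 1)$ must cross-intersect. Restoring that condition alone does not help, because the star at $2$ satisfies it.

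What is missing is the coupling between the two layers that shiftedness provides. For $F\in\mathcal{A}(\bar1)$ and $x\in F$ one has $(F\setminus\{x\})\cup\{1\}\in\mathcal{A}$. Hence the shadow of $\mathcal{A}(\bar 1)$ lies in the link of $\mathcal{A}(1)$ and must cross-intersect $\mathcal{B}(\bar 1)$, and symmetrically. This is exactly what kills the star at $2$.

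Your AM--GM heuristic fails for the same reason: the gains $a,b$ are not small compared with the costs $g(b),g(a)$. On the slice $b=0$ your claim is at least true, since every value $f(a,0)$ is attained by a nontrivial pair. But there the inequality $f(a,0)\le f(1,0)$ is where most of the theorem's content lies, and you only assert it.

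\textbf{The trivializing-shift case is also not handled.} You assert that if every set contains $i$ or $j$, the product is strictly below the bound. The extremal pair itself is a counterexample: $\mathcal{A}$ the star at $1$ together with $[2,k+1]$, and $\mathcal{B}$ the sets through $1$ meeting $[2,k+1]$. Every set there contains $1$ or $2$, so no strict inequality can follow from that property alone. Even the non-strict version is not a direct count. Writing everything via traces on $\{1,2\}$, you need a Hilton--Milner type bound for the two cross-intersecting $(k-1)$-uniform pairs $(\mathcal{A}(1\bar2),\mathcal{B}(\bar12))$ and $(\mathcal{A}(\bar12),\mathcal{B}(1\bar2))$ on $[3,n]$.

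A minor slip as well: the sets excluded from $\mathcal{B}(1)$ are those whose link is disjoint from \emph{some} member of $\mathcal{A}(\bar 1)$, not from all of them.
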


For $t+1\leq s\leq k+1,$ set
\begin{align*}
&\mathcal{A}_{n,k,s,t}=\{A\in {[n]\choose k}\colon |A\cap[s]|\geq t\}, \\
&\mathcal{B}_{n,\ell,s,t} =\{B\in {[n]\choose \ell}\colon [s]\subset B\}, \\
&\mathcal{H}_{n,k,s,t}=\{A\in {[n]\choose k}\colon \text{either}\ [t]\subset A\ \text{or}\ |A\cap[s]|\geq s-1\},\\ & \mathcal{I}_{n,\ell,s,t}=\{B\in {[n]\choose \ell}\colon [t]\subset B\ \text{and}\ |B\cap[s]|\geq t+1\}.
\end{align*}

It is obvious that $\mathcal{B}_{n,\ell,t+1,t} =\mathcal{I}_{n,\ell,t+1,t}$, $\mathcal{A}_{n,k,t+1,t} = \mathcal{H}_{n,k,t+1,t}$, and $\mathcal{B}_{n,\ell,s,t}$ and $\mathcal{A}_{n,k,s,t}$, $\mathcal{H}_{n,k,s,t}$ and $\mathcal{I}_{n,\ell,s,t}$, $\mathcal{F}_{n,k,t,r}$ and $\mathcal{F}_{n,\ell,t,r}$ are all maximal nontrivial cross-$t$-intersecting families. We also set
{\small 
\begin{align}
& h_{n,k,\ell,s,t}=|\mathcal{H}_{n,k,s,t}||\mathcal{I}_{n,\ell,s,t}|=\left({n-t\choose k-t}+t{n-s\choose k-s+1}\right)\left ({n-t\choose \ell-t}-{n-s\choose \ell-t}\right),\\
& f_{n,k,\ell,t,r}=|\mathcal{F}_{n,k,t,r}||\mathcal{F}_{n,\ell,t,r}|=\sum_{t+r\leq i,j\leq t+2r}{t+2r\choose i}{n-t-2r\choose k-i}{t+2r\choose j}{n-t-2r\choose \ell-j},\\
&g_{n,k,\ell,s,t}=|\mathcal{A}_{n,k,s,t}||\mathcal{B}_{n,\ell,s,t}|={n-s\choose \ell-s}\sum_{t\leq i\leq s}{s\choose i}{n-s\choose k-i}.
\end{align}}

In 2026, He et al. used the shift operator and  generating set method to determine the nontrivial extremal families for $t\geq 3$ as follows.

\begin{theorem}[He et al. \cite{HLWZ2026}]
Let $n,k,\ell$ and $t$ be positive integers with $k\geq \ell\geq t\geq 3$ and $n \geq (t+1)(k-t+1)$. 
If \(\mathcal{A}\subset \binom{[n]}{k}\) and \(\mathcal{B}\subset \binom{[n]}{\ell}\) are nontrivial cross-$t$-intersecting families, then
$$|\mathcal{A}||\mathcal{B}|\leq \max\{h_{n,\ell,k,t+1,t}, h_{n,\ell,k,\ell+1,t},f_{n,k,\ell,t,1}\}.$$
Moreover, the equality holds if and only if, up to isomorphism, one of the following holds:
\begin{itemize}
	\item [(i)] $\mathcal{A}=\mathcal{I}_{n,k,t+1,t}$ and $\mathcal{B}=\mathcal{H}_{n,\ell,t+1,t}$ when $h_{n,\ell,k,t+1,t}\geq \max\{ h_{n,\ell,k,\ell+1,t},f_{n,k,\ell,t,1}\}$;
	
	\item [(ii)] $\mathcal{A}=\mathcal{I}_{n,k,\ell+1,t}$ and $\mathcal{B}=\mathcal{H}_{n,\ell,\ell+1,t}$ when $h_{n,\ell,k,\ell+1,t}\geq \max\{h_{n,\ell,k,t+1,t},f_{n,k,\ell,t,1}\}$;
	
	\item [(iii)] $\mathcal{A}=\mathcal{F}_{n,k,t,1}$ and $\mathcal{B}=\mathcal{F}_{n,\ell,t,1}$ when $f_{n,k,\ell,t,1}\geq \max\{h_{n,\ell,k,t+1,t}, h_{n,\ell,k,\ell+1,t}\}$.
\end{itemize}

\end{theorem}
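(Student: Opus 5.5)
The proof will follow the shifting and generating-set route that was used for Theorem~\ref{TC13}. \emph{Step 1: reduction to shifted families.} We may assume $\mathcal{A}$ and $\mathcal{B}$ are maximal nontrivial cross-$t$-intersecting. We then apply shifts $S_{ij}$ with $i<j$. A shift preserves cross-$t$-intersection and the sizes $|\mathcal{A}|,|\mathcal{B}|$. It can, however, destroy nontriviality, making $[t]\subset\bigcap_{A\in\mathcal{A}\cup\mathcal{B}}A$. The standard remedy is to perform only those shifts that keep $|\bigcap_{A\in\mathcal{A}\cup\mathcal{B}}A|<t$. If every remaining shift would make the pair trivial, we analyse that configuration directly, as in Frankl's argument for Hilton--Milner. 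It forces the pair to be contained, up to isomorphism, in $\mathcal{H}_{n,k,s,t}\times\mathcal{I}_{n,\ell,s,t}$ or in $\mathcal{I}\times\mathcal{H}$ for some $s$. Otherwise we may assume both families are shifted and nontrivial.

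\emph{Step 2: generating sets.} For shifted $\mathcal{A}$ we take the generating family $g(\mathcal{A})$ of minimal sets $G$ such that $\mathcal{A}$ is the up-closure of $G$ within $\binom{[n]}{k}$, and similarly $g(\mathcal{B})$. Shiftedness bounds the generating sets: every $G\in g(\mathcal{A})$ has $\max G\le k+\ell-t$, which is controlled through the cross-intersection condition. We then split according to the largest element $m$ appearing in the generators. The key lemma is an exchange inequality: if $m$ is large (say $m\ge t+3$), remove the generators containing $m$ from one family. This enlarges the admissible sets for the other family, and the product does not decrease because $n\ge (t+1)(k-t+1)$. Iterating, we reduce to generators inside $[t+2]$, or to the configuration $\{[t]\}\cup\{$sets meeting $[s]$ in $s-1$ points$\}$, whose product is $h_{n,\ell,k,s,t}$ with $t+1\le s\le \ell+1$. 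In the first case, nontriviality and maximality force $\mathcal{A}=\mathcal{F}_{n,k,t,1}$ and $\mathcal{B}=\mathcal{F}_{n,\ell,t,1}$, giving $f_{n,k,\ell,t,1}$. The families $\mathcal{A}_{n,k,s,t}\times\mathcal{B}_{n,\ell,s,t}$ also appear, with product $g_{n,k,\ell,s,t}$; we will show that $g$ is dominated by $h$ at $s=t+1$ once $n\ge (t+1)(k-t+1)$.

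\emph{Step 3: comparing the candidates.} We show that $s\mapsto h_{n,\ell,k,s,t}$ is convex, or at least unimodal with a minimum, on $t+1\le s\le \ell+1$, so its maximum is attained at an endpoint $s=t+1$ or $s=\ell+1$. The method is to compare consecutive ratios $h_{s+1}/h_s$ via binomial identities and show that they cross $1$ at most once, from below. This leaves the three quantities in the statement. Equality is traced back through each strict inequality in the exchange lemma and the monotonicity argument. We also check that unshifting preserves the extremal structures, using the usual argument that a pair isomorphic to $\mathcal{H}\times\mathcal{I}$ or $\mathcal{F}\times\mathcal{F}$ after a shift was already of that form.

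\emph{Main obstacle.} The hardest step is the exchange lemma in Step 2 at the threshold $n=(t+1)(k-t+1)$, where $k>\ell$ is allowed. The product inequality must hold simultaneously for two different uniformities, and the margins are tight. I would first try to prove it by bounding the ratio of the counts of sets containing $m$ through $\binom{n-i}{k-j}/\binom{n-i}{\ell-j}$-type estimates. If needed, I would fall back on a case split on whether $m\le \ell+1$ or $m>\ell+1$, which is where the $h_{\ell+1}$ candidate arises.
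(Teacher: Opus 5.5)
The paper only cites this theorem. Its proof of the $t=2$ analogue (Theorem \ref{MainResult}), however, follows He et al.'s method, and your outline has the same architecture: left-compression, generating sets, a case analysis on $s$, and comparison of $h$-values. \textbf{The genuine gap is the exchange lemma in Step 2.} As you state it, you delete the generators containing the top element from one family, enlarge the other, and claim the product does not decrease because $n\ge (t+1)(k-t+1)$. This is false for either fixed choice of family, and you give no rule for choosing.

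To see this, take $\mathcal{A}=\mathcal{A}_{n,k,s,t}$ and $\mathcal{B}=\mathcal{B}_{n,\ell,s,t}$. Deleting the unique generator $[s]$ of $\mathcal{B}$ empties $\mathcal{B}$. When $g(\mathcal{A})=\{[s]\}$ the roles reverse. So the profitable direction is dictated by the configuration, not by $n$.

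What the paper uses instead is a dichotomy:
\begin{itemize}
\item take $i$ minimal with $g_i^*(\mathcal{A})\neq\emptyset$, and its partner class $g^*_{s+t-i}(\mathcal{B})$ from Lemma \ref{lemkl}(ii);
\item build both pairs $(\mathcal{A}_1,\mathcal{B}_1)$ and $(\mathcal{A}_2,\mathcal{B}_2)$ as in Lemma \ref{NMP1};
\item show that $|\mathcal{A}_1||\mathcal{B}_1|\le|\mathcal{A}||\mathcal{B}|$ and $|\mathcal{A}_2||\mathcal{B}_2|\le|\mathcal{A}||\mathcal{B}|$ cannot hold simultaneously.
\end{itemize}
The contradiction comes from playing the two inequalities against each other, not from a one-sided estimate. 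Even this dichotomy is only available outside certain symmetric cases $i=s+t-i$; for $t=2$ these are $(s,i)\in\{(6,4),(8,5)\}$. In those cases Lemma \ref{NMP1}(ii) instead shows there are no generators of size below $i$. By maximality, $g(\mathcal{A})=g(\mathcal{B})=\binom{[s]}{i}$, so the pair is $\mathcal{F}(n,k,t,i-t)\times\mathcal{F}(n,\ell,t,i-t)$, which is then beaten by $f_{n,k,\ell,t,1}$. Each move must also be checked for nontriviality. By Lemma \ref{transform} this fails exactly at $\mathcal{I}_{n,k,s,t}\times\mathcal{H}_{n,\ell,s,t}$ (or its mirror for the second move), and that is precisely how the $h$-configurations enter.

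There are two further problems.

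\textbf{Step 1.} Your claim that a "stuck" configuration lies in $\mathcal{H}\times\mathcal{I}$ or $\mathcal{I}\times\mathcal{H}$ is unproved, and for maximal pairs it cannot be right as stated. Suppose $S_{ij}$ makes the pair trivial with common $t$-set $T$. Then $i\in T$, and every original member contains the $(t-1)$-set $T\setminus\{i\}$. But the members of $\mathcal{H}_{n,\ell,s,t}$ have empty common intersection. The paper avoids this entirely via Lemma \ref{Sij}, which for maximal nontrivial pairs yields a left-compressed nontrivial pair.

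\textbf{Steps 2--3.} Generators inside $[t+2]$ do not force $\mathcal{F}_{n,k,t,1}\times\mathcal{F}_{n,\ell,t,1}$. For $s=t+2$ you also get $\mathcal{I}_{n,k,t+2,t}\times\mathcal{H}_{n,\ell,t+2,t}$ and its mirror (compare Subcases 3.1 and 3.2 of the $t=2$ proof).

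More importantly, consider the mirror orientation $\mathcal{A}=\mathcal{H}_{n,k,s,t}$, $\mathcal{B}=\mathcal{I}_{n,\ell,s,t}$ with $t+1\le s\le k+1$; this includes $\mathcal{A}_{n,k,t+1,t}\times\mathcal{B}_{n,\ell,t+1,t}$. Its product $h_{n,k,\ell,s,t}$ is not among the three quantities in the bound. You must prove it is at most $\max\{h_{n,\ell,k,t+1,t},h_{n,\ell,k,\ell+1,t}\}$, strictly when $k>\ell$; this is the job of Lemmas \ref{hn,l,k,3,2}--\ref{4<s<k} for $t=2$. Your Step 3 treats only $s\mapsto h_{n,\ell,k,s,t}$. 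There, unimodality is more than you need: comparing each interior value directly with the endpoint, as the paper does, suffices.
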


Similar to solving Tokushige's conjecture on non-uniform cross-$t$-intersecting families for $t=2$ \cite{BJX2026}, we also apply the shift operator and the generating set method to determine the nontrivial extremal cross-$2$-intersecting families in this paper.

\begin{theorem}\label{MainResult}
Let $n,k$ and $\ell$ be positive integers with $n \geq 3(k-1)$ and $k\geq \ell$. Suppose that 
\(\mathcal{A}\subset \binom{[n]}{k}\) and \(\mathcal{B}\subset \binom{[n]}{\ell}\) are nontrivial cross-$2$-intersecting families. If $\ell\geq 3$, then
$$|\mathcal{A}||\mathcal{B}|\leq \max\{h_{n,\ell,k,3,2}, h_{n,\ell,k,\ell+1,2},f_{n,k,\ell,2,1}\},$$
moreover, the equality holds if and only if, up to isomorphism, one of the following holds:
\begin{itemize}
	\item [(i)] $\mathcal{A}=\mathcal{I}_{n,k,3,2}$ and $\mathcal{B}=\mathcal{H}_{n,\ell,3,2}$ when $h_{n,\ell,k,3,2}\geq \max\{ h_{n,\ell,k,\ell+1,2},f_{n,k,\ell,2,1}\}$;
	
	\item [(ii)] $\mathcal{A}=\mathcal{I}_{n,k,\ell+1,2}$ and $\mathcal{B}=\mathcal{H}_{n,\ell,\ell+1,2}$ when $h_{n,\ell,k,\ell+1,2}\geq \max\{h_{n,\ell,k,3,2},f_{n,k,\ell,2,1}\}$;
	
	\item [(iii)] $\mathcal{A}=\mathcal{F}_{n,k,2,1}$ and $\mathcal{B}=\mathcal{F}_{n,\ell,2,1}$ when $f_{n,k,\ell,2,1}\geq \max\{h_{n,\ell,k,3,2}, h_{n,\ell,k,\ell+1,2}\}$.
\end{itemize}
If $\ell=2$, then $k\geq 3$ and $|\mathcal{A}||\mathcal{B}|\leq 3\binom{n-3}{k-3}$, moreover, the equality holds if and only if, up to isomorphism, $\mathcal{B}=\binom{[3]}{2}$ and $\mathcal{A}=\{A\in \binom{[n]}{k}\colon [3]\subset A\}$. 
	
\end{theorem}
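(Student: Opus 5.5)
We follow the strategy of \cite{HLWZ2026} and \cite{BJX2026}: compress by shifting, reduce to generating sets, and compare a small number of candidate configurations. Fix a maximal nontrivial pair $(\mathcal{A},\mathcal{B})$ attaining the maximum of $|\mathcal{A}||\mathcal{B}|$. Maximality lets us assume $\mathcal{A}$ is an up-set within $\binom{[n]}{k}$ relative to $\mathcal{B}$, i.e. $\mathcal{A}=\{A:|A\cap B|\ge 2\ \forall B\in\mathcal{B}\}$, and symmetrically for $\mathcal{B}$. We then apply the shifts $S_{ij}$ ($i<j$). These preserve cross-$2$-intersection and the sizes, but may destroy nontriviality. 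We use the standard remedy: shift only while the pair stays nontrivial. If some $S_{ij}$ would make $|\bigcap(\mathcal{A}\cup\mathcal{B})|\ge 2$, the family just before that shift has a rigid structure, namely all members contain $2$ fixed elements except those meeting a small set. That structure can be analysed directly and bounded by $h_{n,\ell,k,3,2}$ or $h_{n,\ell,k,\ell+1,2}$ via the usual counting. So we may assume both families are shifted, with $[2]\not\subseteq\bigcap$.

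For shifted families we pass to generating sets. Write $\mathcal{A}=\langle\mathcal{G}_A\rangle$ and $\mathcal{B}=\langle\mathcal{G}_B\rangle$, where $\mathcal{G}_A,\mathcal{G}_B$ are the minimal subsets $G$ for which every $k$-set (resp. $\ell$-set) containing $G$ lies in the family. Shiftedness lets us take these inside $[m]$ with $m$ bounded in terms of $k+\ell$, and cross-$2$-intersection holds already at the generator level. Let $s_A$ and $s_B$ be the maximal last elements of generators.
\begin{itemize}
\item The core step, as in \cite{BJX2026}, is a ``decreasing $s$'' lemma. If the maximal generator element $s$ exceeds a threshold (roughly $s>\ell+1$ on the $\mathcal{B}$ side), replace the generators ending at $s$ by their truncations. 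This produces a new cross-$2$-intersecting nontrivial pair with strictly larger product. The comparison reduces to an inequality between binomial sums, and this is where $n\ge 3(k-1)$ enters.
\item Iterating the lemma forces the generating sets to live in very small ground sets.
\item A finite case analysis then shows the survivors are exactly the three families $\mathcal{H}/\mathcal{I}$ with $s=3$, $\mathcal{H}/\mathcal{I}$ with $s=\ell+1$, and $\mathcal{F}_{\cdot,\cdot,2,1}$. For example, if $\mathcal{G}_B=\{[2]\cup\{x\}\}\cup\{\text{sets avoiding }1\text{ or }2\}$, we get the $\mathcal{H}/\mathcal{I}$ pattern.
\end{itemize}
The intermediate values $4\le s\le \ell$ are ruled out by a convexity argument: $h_{n,\ell,k,s,2}$ as a function of $s$ is maximised at an endpoint. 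This mirrors the $t\ge3$ proof, and is why only $s=3$ and $s=\ell+1$ appear.

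For $\ell=2$, a nontrivial $\mathcal{B}$ consists of pairs. Every $A$ must contain every $B\in\mathcal{B}$, so $\bigcup\mathcal{B}\subseteq A$. Nontriviality requires $|\bigcup\mathcal{B}|\ge 3$, and if $k=2$ the pair cannot be nontrivial, so $k\ge3$.
\begin{itemize}
\item If $|\bigcup\mathcal{B}|=u$, then $|\mathcal{B}|\le\binom{u}{2}$ and $|\mathcal{A}|\le\binom{n-u}{k-u}$.
\item The product $\binom{u}{2}\binom{n-u}{k-u}$ is decreasing in $u\ge3$ for $n\ge 3(k-1)$. This comes from the ratio $\frac{u+1}{u-1}\cdot\frac{k-u}{n-u}<1$.
\end{itemize}
This gives the bound $3\binom{n-3}{k-3}$ with the stated extremal pair.

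Uniqueness follows by tracking strictness in each inequality and noting that the shifting reductions are reversible only up to isomorphism. The main obstacle is the decreasing-$s$ lemma for $t=2$ near the threshold $n=3(k-1)$. There the naive binomial inequalities fail for small $\ell$, for instance $\ell=3$, or when $k$ is much larger than $\ell$. I would first treat these boundary cases by a direct case split on $|\mathcal{G}_B\cap\binom{[s]}{2}|$, imitating \cite{BJX2026}.
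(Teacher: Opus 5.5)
\textbf{There is a genuine gap in your handling of the main range $s\ge5$.} Your $\ell=2$ argument is the same as the paper's and is fine. The trouble is the $\ell\ge3$ case with $s\ge5$ and $3\le i\le s-1$, where $i$ is the least size of a generator of $\mathcal{A}$ containing $s$.

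Your ``decreasing-$s$'' lemma pushes the generators at level $s$ in one fixed direction and claims a strictly larger \emph{nontrivial} pair above a threshold on $s$. Neither half holds in general.

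\emph{Direction.} The exchange $(\mathcal{A}_1,\mathcal{B}_1)$ changes the sizes by $+|g_i^*(\mathcal{A})|\binom{n-s}{k-i+1}$ and $-|g_{s+2-i}^*(\mathcal{B})|\binom{n-s}{\ell+i-s-2}$. The opposite exchange $(\mathcal{A}_2,\mathcal{B}_2)$ changes them by $-|g_i^*(\mathcal{A})|\binom{n-s}{k-i}$ and $+|g_{s+2-i}^*(\mathcal{B})|\binom{n-s}{\ell+i-s-1}$. Which one helps depends on the unknown counts $|g_i^*(\mathcal{A})|$, $|g^*_{s+2-i}(\mathcal{B})|$ and on $|\mathcal{A}|,|\mathcal{B}|$, so no single direction works. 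The missing idea is two-sided. Lemma \ref{NMP1} (from \cite{BJX2026}) says the two exchanges cannot \emph{both} fail to increase the product unless $(s,i)\in\{(6,4),(8,5)\}$. In those two cases the generating sets are forced to be $\binom{[s]}{i}$, giving $\mathcal{F}(n,\cdot,2,i-2)$, which loses to $r=1$. A one-step decrease of $s$ of your kind is used in the paper only for the degenerate configurations $i=2$ or $i=s$, where $g(\mathcal{B})=\{[s]\}$ or $g(\mathcal{A})=\{[s]\}$.

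\emph{Nontriviality.} The exchanged pair need not be nontrivial. By Lemma \ref{transform}, it becomes trivial exactly when $(\mathcal{A},\mathcal{B})=(\mathcal{I}_{n,k,s,2},\mathcal{H}_{n,\ell,s,2})$, or the reverse for the other exchange. That, not a size threshold, is what produces the $\mathcal{H}/\mathcal{I}$ candidates, and it produces them at every $s\ge5$, not only at $s=\ell+1$.

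Consequently the rest of your outline does not close.
\begin{enumerate}
\item After your reduction $s$ can still be as large as $\ell+1$, which is unbounded. So there is no ``finite case analysis'' on generating sets inside $[s]$.
\item Comparing $h_{n,\ell,k,s,2}$ across $s$ only ranks the $\mathcal{H}/\mathcal{I}$ pairs among themselves. It presupposes that every competitor at intermediate $s$ is of that type, which is exactly what the two-sided exchange plus Lemma \ref{transform} establishes.
\item The ranking itself is not a convexity fact you can cite. The paper proves $h_{n,\ell,k,3,2}>h_{n,k,\ell,3,2}$ and $h_{n,\ell,k,\ell+1,2}>h_{n,k,\ell,k+1,2}$ for $k>\ell$, and $h_{n,k,\ell,s,2}<h_{n,k,\ell,k+1,2}$ for $4\le s\le k$, by explicit estimates using $n\ge 3k-3$. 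It needs these in both orientations of $(k,\ell)$, because either exchange may be the one that trivializes.
\item Your remedy for shifts that would destroy nontriviality is only asserted. A pair in which every member contains a fixed element $x$ and meets a fixed pair $\{a,b\}$ is not evidently bounded by the $h$-values. The paper sidesteps this with the lemma of \cite{HLWZ2026} (Lemma \ref{Sij}) that a maximal nontrivial pair can be shifted to a left-compressed nontrivial one.
\end{enumerate}
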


 The rest of this paper is organized as follows: Section 2 introduces the necessary definitions and fundamental properties of the methods. Section 3 presents crucial inequalities and Section 4 provides a complete proof of our main result.  

\section{Preliminaries}
Let $\mathcal{A}$ be a family consisting of $k$-element subsets of $[n]$. For $i,j\in[n]$ and $A\in\mathcal{A}$, define 
\[
S_{ij}(A)= 
\begin{cases}
(A\backslash\{j\})\cup\{i\} & \text{if } j\in A, i\notin A,(A\backslash\{j\})\cup\{i\}\notin \mathcal{A}, \\
A & \text{otherwise,}
\end{cases}
\]
and set $S_{ij}(\mathcal{A}) = \{S_{ij}(A):A\in\mathcal{A}\}$ correspondingly. The procedure to obtain $S_{ij}(\mathcal{A})$ from $\mathcal{A}$ is called the {\em shift operation}, which was first introduced in \cite{EKR1961} (see also \cite{F1987}). We observe that $S_{ij}(\mathcal{A})$ has the same cardinality as $\mathcal{A}$ and is also $k$-uniform. We say that $\mathcal{A}$ is {\em left-compressed} if $S_{ij}(\mathcal{A})=\mathcal{A}$ for all ordered pairs $(i,j)$ with $1\leq i < j\leq n$. The following fact is well-known. 

\begin{fact}[\cite{F1987}] \label{lcl}
Let $\mathcal{A}$ and $\mathcal{B}$ be two families of subsets of $[n]$. If $\mathcal{A}$ and $\mathcal{B}$ are cross-$t$-intersecting, then $S_{i,j}(\mathcal{A})$ and $S_{i,j}(\mathcal{B})$ are cross-$t$-intersecting with $|S_{i,j}(\mathcal{A})| = |\mathcal{A}|$ and $|S_{i,j}(\mathcal{B})| = |\mathcal{B}|$.
\end{fact}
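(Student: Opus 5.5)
The approach is to check the two claims of Fact~\ref{lcl} directly from the definition of $S_{ij}$: first that $S_{ij}$ is injective on a family, then that every pair of shifted sets still meets in at least $t$ elements. For brevity write $S=S_{ij}$, $A'=S(A)$ for $A\in\mathcal{A}$, and $B'=S(B)$ for $B\in\mathcal{B}$.

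\emph{Cardinality.} I will show that $S$ is injective on $\mathcal{A}$; the argument for $\mathcal{B}$ is identical. Suppose $S(A_1)=S(A_2)$ with $A_1\neq A_2$. Then at least one of them was moved, say $S(A_1)=(A_1\setminus\{j\})\cup\{i\}$.
\begin{itemize}
\item If $A_2$ was not moved, then $A_2=(A_1\setminus\{j\})\cup\{i\}\in\mathcal{A}$. This contradicts the condition under which $A_1$ is moved.
\item If $A_2$ was also moved, then $(A_1\setminus\{j\})\cup\{i\}=(A_2\setminus\{j\})\cup\{i\}$, where $j\in A_1\cap A_2$ and $i\notin A_1\cup A_2$. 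This forces $A_1=A_2$, a contradiction.
\end{itemize}
Hence $|S(\mathcal{A})|=|\mathcal{A}|$, and likewise $|S(\mathcal{B})|=|\mathcal{B}|$.

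\emph{Cross-$t$-intersection.} Fix $A\in\mathcal{A}$ and $B\in\mathcal{B}$, and split into cases according to which of the two sets is moved.
\begin{itemize}
\item Neither is moved. Then $A'\cap B'=A\cap B$, which has size at least $t$.
\item Both are moved. Both lose $j$ and gain $i$, so $|A'\cap B'|=|A\cap B|\geq t$.
\item Exactly one is moved, say $A$ (the other case is symmetric). Then $A'=(A\setminus\{j\})\cup\{i\}$ and $B'=B$.
 \begin{itemize}
 \item If $j\notin B$, then removing $j$ from $A$ loses nothing from the intersection.
 \item If $i\in B$, then adding $i$ compensates for the loss of $j$.
 \end{itemize}
 In both subcases $|A'\cap B|\geq |A\cap B|\geq t$.
\end{itemize}

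The only delicate case, and the main point of the argument, is when $A$ is moved, $B$ is not, $j\in B$ and $i\notin B$. Since $B$ was not moved even though $j\in B$ and $i\notin B$, the definition of $S$ forces $B^*=(B\setminus\{j\})\cup\{i\}\in\mathcal{B}$. By the cross-$t$-intersecting property, $|A\cap B^*|\geq t$. Because $i\notin A$, we have $A\cap B^*=(A\cap B)\setminus\{j\}$. This is exactly $A'\cap B$, since $i\notin B$. Therefore $|A'\cap B'|=|A\cap B^*|\geq t$.

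Combining the cases, $S_{ij}(\mathcal{A})$ and $S_{ij}(\mathcal{B})$ are cross-$t$-intersecting and have the same sizes as $\mathcal{A}$ and $\mathcal{B}$. The argument uses nothing about uniformity, so it applies to $\mathcal{A}\subseteq\binom{[n]}{k}$ and $\mathcal{B}\subseteq\binom{[n]}{\ell}$ with $k\neq\ell$.
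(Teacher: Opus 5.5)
Your proof is correct: the injectivity argument and the case analysis are complete, and the one nontrivial case ($A$ moved, $B$ fixed with $j\in B$, $i\notin B$) is settled correctly by noting that $(B\setminus\{j\})\cup\{i\}\in\mathcal{B}$ meets $A$ in exactly $(A\cap B)\setminus\{j\}=A'\cap B$. The paper gives no proof of this Fact and simply cites Frankl's survey \cite{F1987}; your argument is the standard one from that reference.
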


\begin{lemma}[He et al. \cite{HLWZ2026}]\label{Sij}
Let \(\mathcal{A}\subseteq \binom{[n]}{k}\) and \(\mathcal{B}\subseteq \binom{[n]}{\ell}\) be two maximal nontrivial cross-$t$-intersecting families. One may apply certain $S_{ij}$ operations on \(\mathcal{A}\) and \(\mathcal{B}\) such that the resulting families are left-compressed, nontrivial, and cross-$t$-intersecting.
\end{lemma}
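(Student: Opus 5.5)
The statement to prove is Lemma~\ref{Sij}: starting from maximal nontrivial cross-$t$-intersecting families, we can reach left-compressed ones by shifts without ever losing nontriviality. The plan is to shift greedily and repair the shifts that would destroy nontriviality by a relabelling argument.

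\textbf{Basic loop.} We apply shifts $S_{ij}$ with $i<j$ one at a time. By Fact~\ref{lcl}, each shift preserves sizes and the cross-$t$-intersecting property. Since a shifted pair is again cross-$t$-intersecting with the same product $|\mathcal{A}||\mathcal{B}|$, it stays maximal, and every shift that changes a family strictly decreases the potential $\sum_{F\in\mathcal{A}\cup\mathcal{B}}\sum_{x\in F}x$. So any sequence of nontrivial-preserving shifts terminates. The only issue is a step where $S_{ij}(\mathcal{A}),S_{ij}(\mathcal{B})$ become trivial, i.e.\ every member contains a common $t$-set $T$.

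\textbf{Structure of a bad shift.} Suppose that, for the current nontrivial maximal pair $(\mathcal{A},\mathcal{B})$, the shifted pair is trivial with common $t$-set $T$. If $i\notin T$, then every shifted set containing $T$ came from an original set that already contained $T$, since shifting only replaces $j$ by $i$. That would make the original pair trivial, so $i\in T$. Likewise, if $j\in T$, the shifted sets lose $j$ only when it is replaced, so sets containing $j$ after the shift already contained it before; the nontriviality of the original pair then forces $j\notin T$. So $i\in T$ and $j\notin T$.

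Put $T'=(T\setminus\{i\})\cup\{j\}$. Every original set contains $T\setminus\{i\}$ together with $i$ or $j$, so every member of $\mathcal{A}\cup\mathcal{B}$ contains $T\setminus\{i\}$ and meets $\{i,j\}$. Maximality then identifies the pair: it must be contained in, and hence equal to, a family of the type $\mathcal{H},\mathcal{I}$ (or $\mathcal{A}_{n,k,s,t},\mathcal{B}_{n,\ell,s,t}$) built on the $(t+1)$-set $T\cup\{j\}$.

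\textbf{Repair.} In that situation we do not perform $S_{ij}$. Instead we use the symmetry of $[n]$: we relabel the ground set by a permutation sending $T\cup\{j\}$ to $[t+1]$. These families are invariant under every permutation of $T\cup\{j\}$ and every permutation of the complement, so after the relabelling they are already left-compressed and we stop. An alternative fallback is to skip the pair $(i,j)$ and show the final families are still left-compressed. The cleanest route is to prove that whenever a bad shift occurs, the current families are, up to isomorphism, $\mathcal{H}_{n,k,t+1,t},\mathcal{I}_{n,\ell,t+1,t}$ (equivalently $\mathcal{A}_{n,k,t+1,t}$, $\mathcal{B}_{n,\ell,t+1,t}$), which are left-compressed on $[t+1]$.

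\textbf{Expected obstacle.} The main difficulty is this identification. We must show that cross-$t$-intersecting families with every member containing $T\setminus\{i\}$ and meeting $\{i,j\}$ can be maximal only if they are the $t+1$ star-type families. I would split by whether some member of $\mathcal{B}$ (or of $\mathcal{A}$) misses $i$ or misses $j$. In each case, cross-$t$-intersection forces the opposite family to contain both $i$ and $j$, and maximality then fills each family out completely.
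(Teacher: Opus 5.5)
\textbf{Verdict: there is a genuine gap.} Your ``Structure of a bad shift'' step is correct: a bad $S_{ij}$ forces $i\in T$, $j\notin T$, and every member to contain $T_0=T\setminus\{i\}$ and meet $\{i,j\}$. The identification that follows is false, however. Take a member containing $i$ but not $j$ and a member containing $j$ but not $i$. They need only one common element outside $S=T\cup\{j\}$ to be $t$-intersecting, so nothing forces the opposite family into the star of $S$.

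Here is a concrete counterexample with $t=2$ and $\ell=3\le k$. Choose disjoint $(k-2)$-sets $X,Y\subseteq[n]\setminus[3]$, which is possible since $n\ge 3k-3$. Let $\mathcal{A}$ consist of all $k$-sets containing $[3]$, together with $\{1,2\}\cup X$ and $\{1,3\}\cup Y$. Let $\mathcal{B}=\{[3]\}\cup\{\{1,3,x\}:x\in X\}\cup\{\{1,2,y\}:y\in Y\}$.
\begin{itemize}
\item This pair is cross-$2$-intersecting and its common intersection is $\{1\}$.
\item It is maximal. Any added set must meet $[3]$ in at least two points. A set with trace $\{1,2\}$ or $\{1,3\}$ is forced to be one already present. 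A $3$-set $\{2,3,z\}$ would need $z\in X\cap Y=\emptyset$. A $k$-set with trace $\{2,3\}$ would have to contain $X\cup Y$, which has size $2k-4>k-2$.
\item Yet $S_{23}$ maps both families into the star of $\{1,2\}$. So it is a bad shift with $T=\{1,2\}$, $i=2$, $j=3$.
\item Neither family is the star of any $3$-set. So the pair is not $(\mathcal{H}_{n,k,3,2},\mathcal{I}_{n,\ell,3,2})$ in either order, and your ``relabel and stop'' repair has nothing to act on.
\end{itemize}

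There is a second, independent problem. Shifting preserves $|\mathcal{A}||\mathcal{B}|$ but not inclusion-maximality, which is the paper's meaning of ``maximal''. So you cannot invoke maximality of the current pair after the first shift.
\begin{itemize}
\item In the same example, the shifts $S_{ab}$ with $4\le a<b$ keep the pair nontrivial, because $[3]\in\mathcal{B}$ and the traces $\{1,2\},\{1,3\}$ persist.
\item These shifts move $X$ and $Y$ onto $Z=\{4,\dots,k+1\}$. Afterwards $\{2,3\}\cup Z$ can be added to $\mathcal{A}$.
\item Maximality would survive only under the stronger assumption that the product is maximum, which is how the paper actually uses the lemma.
\item After these shifts, $S_{23}$ acts trivially and the resulting pair is already left-compressed and nontrivial. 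So the lemma's conclusion holds here.
\end{itemize}
What your argument lacks is a way to avoid or postpone bad shifts, for instance by shifting on other coordinates first. A classification of the families at a bad shift cannot work, as the example shows. The paper imports this lemma from He et al.\ without proof, so it offers no in-paper argument for this step to compare against.
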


For a subset $E$ of $[n]$, we define 
{\small \[s^{+}(E)=\max\{i:i\in E\},\ \mathcal{U}(E)=\{A\subseteq[n]:E\subseteq A\}, \ \mathcal{D}(E)=\left\{B\in\binom{[n]}{k}:B\cap[s^{+}(E)] = E\right\}.\] }
For a family $\mathcal{E}\subseteq 2^{[n]},$ we set 
{\small \[s^{+}(\mathcal{E})=\max\{s^{+}(E):E\in\mathcal{E}\},\   \mathcal{U}(\mathcal{E})=\bigcup_{E\in\mathcal{E}}\mathcal{U}(E), \ \mathcal{D}(\mathcal{E})=\bigcup_{E\in\mathcal{E}}\mathcal{D}(E).\] } 
For $\emptyset\neq \mathcal{F}\subset \binom{[n]}{k}$, a family $g(\mathcal{F})\subseteq\bigcup_{i\leq k}\binom{[n]}{i}$ is called a {\em generating set} of $\mathcal{F}$ if 
{\small \[\mathcal{U}(g(\mathcal{F}))\cap\binom{[n]}{k}=\mathcal{F}.\]}
Clearly, $\mathcal{F}$ itself is a generating set of itself. The set of all generating sets of $\mathcal{F}$ is nonempty and denoted by $G(\mathcal{F})$. The notion of a generating set of a $k$-uniform family was first introduced in \cite{AK1997} (see also \cite{AK1996}). For a generating set $g(\mathcal{F})\in G(\mathcal{F})$, let $g_{*}(\mathcal{F})$ be the set of all minimal elements of $g(\mathcal{F})$ with respect to set inclusion. Define
\(G_{*}(\mathcal{F})=\{g(\mathcal{F})\in G(\mathcal{F}):g(\mathcal{F}) = g_{*}(\mathcal{F})\}.\) Clearly, $G_{*}(\mathcal{F})\neq \emptyset$. Take any $g(\mathcal{F})\in G_{*}(\mathcal{F})$ and set 
\[s = s^{+}(g(\mathcal{F})).\]
Then define
{\small \[g^{*}(\mathcal{F})=\{E\in g(\mathcal{F}):s\in E\},\ g_{i}^{*}(\mathcal{F})=\{E\in g^{*}(\mathcal{F}):|E| = i\}\ {\rm for}\ 1\leq i\leq s\]}
and 
{\small \[g_{i}^{*}(\mathcal{F})'=\{E\backslash\{s\}:E\in g_{i}^{*}(\mathcal{F})\}.\] }

From \cite{AK1997}, we know that the generating sets have the following properties.

\begin{lemma}[Ahlswede and Khachatrian \cite{AK1997}]\label{AKGS}
Let \(\mathcal{F}\) be a left-compressed \(t\)-intersecting subfamily of \(\binom{[n]}{k}\), \(g(\mathcal{F}) \in G_{*}(\mathcal{F})\) and \(s = s^{+}(g(\mathcal{F}))\). Then the following statements hold.

\begin{enumerate}[(i)]
    \item If \(n > 2k - t\), then \(|E_1 \cap E_2| \geq t\) for all \(E_1, E_2 \in g(\mathcal{F})\).

    \item For \(1 \leq i < j \leq s\) and \(E \in g(\mathcal{F})\), one has either \(S_{ij}(E) \in g(\mathcal{F})\) or \(F \subset S_{ij}(E)\) for some \(F \in g(\mathcal{F})\).

    \item \(\mathcal{F}\) is a disjoint union \(\mathcal{F} = \bigcup_{E \in g(\mathcal{F})} \mathcal{D}(E)\).

    \item If \(\mathcal{F}\) is maximal, then for any \(E_1, E_2 \in g^{*}(\mathcal{F})\) with \(|E_1 \cap E_2| = t\), necessarily \(|E_1| + |E_2| = s + t\) and \(E_1 \cup E_2 = [s]\). Furthermore, if \(g^{*}_{i}(\mathcal{F}) \neq \emptyset\), then \(g^{*}_{s + t - i}(\mathcal{F}) \neq \emptyset\) and for any \(E_1 \in g^{*}_{i}(\mathcal{F})\), there exists \(E_2 \in g^{*}_{s + t - i}(\mathcal{F})\) with \(|E_1 \cap E_2| = t\) and \(E_1 \cup E_2 = [s]\).

    \item If \(g^{*}_{i}(\mathcal{F}) \neq \emptyset\), then \(\mathcal{F}_1 = \mathcal{F} \cup \mathcal{D}(g^{*}_{i}(\mathcal{F})) \setminus \mathcal{D}(g^{*}_{s + t - i}(\mathcal{F}))\) is also a \(t\)-intersecting subfamily of \(\binom{[n]}{k}\) with
    \[
    |\mathcal{F}_1| = |\mathcal{F}| + |g^{*}_{i}(\mathcal{F})| \binom{n - s}{k - i + 1} - |g^{*}_{s + t - i}(\mathcal{F})| \binom{n -s}{k + i -s - t}.
    \]
\end{enumerate}
\end{lemma}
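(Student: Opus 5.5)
The plan is to prove the five statements in order. (i) and (ii) come from direct extension and compression arguments, (iii) from a canonical choice of generator for each member of $\mathcal{F}$, and (iv)--(v) from the Ahlswede--Khachatrian ``pushing--pulling'' comparison of the top layer $g^*(\mathcal{F})$. Throughout, every $E\in g(\mathcal{F})$ has $|E|\le k$, and $\mathcal{U}(E)\cap\binom{[n]}{k}\subseteq\mathcal{F}$ by the definition of a generating set.

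For (i), I argue by contradiction. Suppose $E_1,E_2\in g(\mathcal{F})$ satisfy $|E_1\cap E_2|\le t-1$. Extend $E_1$ to a $k$-set $A$ and $E_2$ to a $k$-set $B$, choosing the added elements disjoint from each other and from $E_1\cup E_2$. This is possible because
\[
|E_1\cup E_2|+(k-|E_1|)+(k-|E_2|)=2k-|E_1\cap E_2|\le n,
\]
using $n>2k-t$. Then $A,B\in\mathcal{F}$ but $|A\cap B|=|E_1\cap E_2|<t$, a contradiction.

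For (ii), let $E\in g(\mathcal{F})$ and $i<j\le s$. By left-compression, every $k$-superset $A$ of $S_{ij}(E)$ lies in $\mathcal{F}$. If $j\notin A$, then $A$ is the shift of the $k$-set $(A\setminus\{i\})\cup\{j\}\supseteq E$. If $j\in A$, then $A\supseteq E$ already. Hence $S_{ij}(E)$ generates only members of $\mathcal{F}$, so (choosing $g$ among minimal generating sets) $S_{ij}(E)$ either is itself a minimal generator or contains one.

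For (iii), take $A\in\mathcal{F}$. Among the $E\in g(\mathcal{F})$ with $E\subseteq A$, choose one minimizing $s^+(E)$, and then show $A\cap[s^+(E)]=E$. Suppose some $x<s^+(E)$ lies in $A\setminus E$. Then $(E\setminus\{s^+(E)\})\cup\{x\}$ is contained in $A$, and by (ii) it contains a generator with smaller $s^+$, contradicting the choice of $E$. The same comparison of two candidate generators inside $[\max s^+]$ shows that the sets $\mathcal{D}(E)$ are pairwise disjoint, since $A\cap[s^+(E)]=E$ determines $E$.

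For (iv), take $E_1,E_2\in g^*(\mathcal{F})$ with $|E_1\cap E_2|=t$. Since $s\in E_1\cap E_2$, deleting $s$ gives sets meeting in only $t-1$ points. First suppose $E_1\cup E_2\ne[s]$, so some $y<s$ is missing from both. Replacing $E_1$ by $(E_1\setminus\{s\})\cup\{y\}$ keeps the family $t$-intersecting, which contradicts maximality or minimality. Otherwise $E_1\cup E_2=[s]$, and counting gives $|E_1|+|E_2|=s+t$. For the existence part, suppose some $E_1\in g_i^*(\mathcal{F})$ had no such partner. Then $E_1\setminus\{s\}$ would still $t$-intersect every generator, and adding $\mathcal{D}(E_1\setminus\{s\})$ would enlarge $\mathcal{F}$, contradicting maximality.

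For (v), the partner structure from (iv) shows that $E_1\setminus\{s\}$ for $E_1\in g_i^*(\mathcal{F})$ fails to $t$-intersect only sets of the form $E_2\setminus\{s\}$ with $E_2\in g^*_{s+t-i}(\mathcal{F})$. So the modified generating set, with the sets of $g_i^*(\mathcal{F})$ replaced by their restrictions and $g^*_{s+t-i}(\mathcal{F})$ deleted, is $t$-intersecting. The size formula then follows from (iii): $|\mathcal{D}(E')|=\binom{n-s}{k-|E'|}$ when $s^+(E')\le s-1$ is handled by counting sets with prescribed trace on $[s]$. The main obstacle is the careful disjointness and counting in (iii) and (v), that is, making sure that $\mathcal{D}(E\setminus\{s\})$ adds exactly $\binom{n-s}{k-i+1}$ new sets. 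I would handle this by always measuring $\mathcal{D}(\cdot)$ relative to $[s]$.
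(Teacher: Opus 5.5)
The paper gives no proof of this lemma; it only cites Ahlswede--Khachatrian. Judged on its own, your argument has a gap at its central step in (ii), and (iii) and (iv) inherit it.

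\textbf{The gap in (ii), and its effect on (iii) and (iv).} You correctly show, by compression, that every $k$-superset of $S_{ij}(E)$ lies in $\mathcal{F}$. That does not imply that $S_{ij}(E)$ belongs to, or contains a member of, the given antichain $g(\mathcal{F})$. Take $t=1$, $k=4$, $n\ge 8$, $\mathcal{F}=\{A\in\binom{[n]}{4}\colon 1\in A\}$ and $g=\{\{1,2,3\}\}\cup\{\{1,j\}\colon 4\le j\le n\}\in G_*(\mathcal{F})$. Then $S_{24}(\{1,4\})=\{1,2\}$ contains no member of $g$, and $\{1,2,5,6\}\in\mathcal{F}$ lies in no $\mathcal{D}(E)$. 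So (ii) and (iii) are actually false for this $g$.

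What is missing is a witness set. If $n-s\ge k-|E|$, put $A=S_{ij}(E)\cup X$ with $X\subseteq[n]\setminus[s]$ and $|X|=k-|E|$. Then $A\in\mathcal{F}$, so some $F\in g(\mathcal{F})$ lies in $A$, and $F\subseteq[s]$ gives $F\subseteq A\cap[s]=S_{ij}(E)$. Alternatively, fix $g$ canonically as the inclusion-minimal sets $B$ with $|B|\le k$ and $\mathcal{U}(B)\cap\binom{[n]}{k}\subseteq\mathcal{F}$; for that $g$ your sentence is literally true. Either way, some restriction on $g$ or on $n-s$ has to enter (compare the minimality of $s$ imposed in Lemma~\ref{lemkl}). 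Granted (ii), your deduction of (iii) is fine, with disjointness coming from the antichain property.

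The same issue is hidden in the existence half of (iv). You must show that adding the $k$-supersets of $E_1\setminus\{s\}$ strictly enlarges $\mathcal{F}$. One way is a $k$-set $B$ with $B\cap[s]=E_1\setminus\{s\}$ (this needs $n-s\ge k-i+1$), which lies outside $\mathcal{F}$ by (iii) and the antichain property. Without such a step the claim is false: for $k>t$ and $n$ large, the star $\{A\colon [t]\subseteq A\}$ is generated by the antichain $\{[t]\cup\{j\}\colon t<j\le n\}$. This gives $s=n$ and $g^*_{t+1}\neq\emptyset$, but $g^*_{n-1}=\emptyset$. You also need $|E_1\setminus\{s\}|\ge t$; note that $i=t$ forces $E_1=[s]$, which is its own partner.

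\textbf{Steps that are wrong as written.} In the first half of (iv), replacing $E_1$ by $(E_1\setminus\{s\})\cup\{y\}$ does not keep anything $t$-intersecting. That set meets $E_2$ in $(E_1\cap E_2)\setminus\{s\}$, i.e.\ in $t-1$ points. The correct conclusion is that, by (ii), it contains some $F\in g(\mathcal{F})$ with $|F\cap E_2|\le t-1$, contradicting (i); maximality plays no role here.

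In (v), your recipe (restrict $g^*_i(\mathcal{F})$, delete $g^*_{s+t-i}(\mathcal{F})$) only makes sense for $i\neq s+t-i$. If $2i=s+t$, two members of $g^*_i(\mathcal{F})$ meeting in exactly $t$ points have restrictions that are both added and meet in $t-1$ points. For example, take $t=1$, $\mathcal{F}=\{A\colon |A\cap[3]|\ge 2\}$, $s=3$, $i=2$. So (v) must exclude this case. Also, $E_1\setminus\{s\}$ fails to $t$-intersect the sets $E_2\in g^*_{s+t-i}(\mathcal{F})$ themselves, not $E_2\setminus\{s\}$. You were right to read (v) with $\mathcal{D}(g_i^*(\mathcal{F})')$, since $\mathcal{D}(g_i^*(\mathcal{F}))\subseteq\mathcal{F}$ already.

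Finally, in (i) the bound $2k-|E_1\cap E_2|\le n$ fails when $|E_1\cap E_2|<t-1$ and $n<2k$. Choose the extensions with $|A\cup B|=\min\{n,2k-|E_1\cap E_2|\}$; then $|A\cap B|=\max\{|E_1\cap E_2|,2k-n\}\le t-1$.
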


Let \(\mathcal{A}\subseteq \binom{[n]}{k}\) and \(\mathcal{B}\subseteq \binom{[m]}{\ell}\) be two cross-$t$-intersecting families. For $\min\{m, n\}>k+\ell-t$, it is straightforward to verify that $\mathcal{A}$ and $\mathcal{B}$ are cross-$t$-intersecting if and only if $g(\mathcal{A})$ and $g(\mathcal{B})$ are cross-$t$-intersecting for all $g(\mathcal{A})\in G(\mathcal{A})$ and $g(\mathcal{B})\in G(\mathcal{B})$. By applying the shift operations if necessary, we may assume without loss of generality that both $\mathcal{A}$ and $\mathcal{B}$ are left-compressed. Zhang and Wu \cite{ZW2025} obtained properties for the generating sets of families $\mathcal{A}, \mathcal{B} \subseteq \binom{[n]}{k}$. In a similar vein, the first two authors derived the following result. 

\begin{lemma}[Bao and Ji \cite{BJ2025}]\label{lemkl}
Let \(\mathcal{A}\subseteq \binom{[n]}{k}\) and \(\mathcal{B}\subseteq \binom{[m]}{\ell}\) be two maximal left-compressed cross-$t$-intersecting families with $\min\{m, n\}>k+\ell-t$, $g(\mathcal{A})\in G_{*}(\mathcal{A})$, $g(\mathcal{B})\in G_{*}(\mathcal{B})$ such that $s:=\max\{s^{+}(g(\mathcal{A})),s^{+}(g(\mathcal{B}))\}$ is minimal. Then the following statements hold.
\begin{enumerate}[(i)]
    \item For $1\leq i<j\leq s$, $\mathcal{F}\in\{\mathcal{A},\mathcal{B}\}$ and $E\in g(\mathcal{F})$, one has either $S_{ij}(E)\in g(\mathcal{F})$ or $F\subset S_{ij}(E)$ for some $F\in g(\mathcal{F})$.
    \item For $t\leq i\leq k$, $g_{i}^{*}(\mathcal{A})\neq\emptyset$ if and only if $g_{s + t - i}^{*}(\mathcal{B})\neq\emptyset$. Furthermore, for each $E\in g_{i}^{*}(\mathcal{A})$, there exists $F\in g_{s + t - i}^{*}(\mathcal{B})$ such that $|E\cap F| = t$ and $E\cup F=[s]$.
    \item If $g_{i}^{*}(\mathcal{A})\neq\emptyset$, then $\mathcal{A}_{1}=\mathcal{A}\cup \mathcal{D}(g_{i}^{*}(\mathcal{A})')\subseteq \binom{[n]}{k}$ and $\mathcal{B}_{1}=\mathcal{B}\setminus \mathcal{D}(g_{s + t - i}^{*}(\mathcal{B}))\subseteq \binom{[m]}{\ell}$ are also cross-$t$-intersecting families with 
    \[|\mathcal{A}_{1}|=|\mathcal{A}|+|g_{i}^{*}(\mathcal{A})|\binom{n - s}{k - i+1}\quad\text{and}\quad|\mathcal{B}_{1}|=|\mathcal{B}|-|g_{s + t - i}^{*}(\mathcal{B})|\binom{m-s}{\ell + i - s - t}.\]
\end{enumerate}
\end{lemma}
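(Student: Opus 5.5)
The statement to prove is Lemma~\ref{lemkl}. My plan is to imitate the Ahlswede--Khachatrian arguments for Lemma~\ref{AKGS}, run simultaneously on the two families. Throughout, the key tool is the criterion stated just before the lemma: since $\min\{m,n\}>k+\ell-t$, the families $\mathcal{A},\mathcal{B}$ are cross-$t$-intersecting if and only if $g(\mathcal{A}),g(\mathcal{B})$ are. The reason is that for $E\in g(\mathcal{A})$ and $F\in g(\mathcal{B})$ with $|E\cap F|<t$, we can extend $E$ to a $k$-set and $F$ to an $\ell$-set that share nothing outside $E\cap F$; there is room for this because the ground set exceeds $k+\ell-t$.

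For (i), I will use left-compression. Take $E\in g(\mathcal{F})$ and $i<j\le s$ with $j\in E$, $i\notin E$. Every $k$-set (respectively $\ell$-set) containing $S_{ij}(E)$ is obtained by shifting a member of $\mathcal{U}(E)\cap\binom{[n]}{k}$, or already contains $E$. Hence $\mathcal{U}(S_{ij}(E))\cap\binom{[n]}{k}\subseteq\mathcal{F}$. Minimality of $g(\mathcal{F})$ then says that either $S_{ij}(E)$ itself is a minimal generator or some $F\subset S_{ij}(E)$ lies in $g(\mathcal{F})$. To make this rigorous I may need to replace $g(\mathcal{F})$ by the canonical minimal generating set. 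I will check that doing so does not increase $s^{+}$, which is consistent with the choice that $s$ is minimal.

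Part (iii) is the core. Take $E\in g_i^*(\mathcal{A})$ and let $E'=E\setminus\{s\}$. I must show that $E'$ cross-$t$-intersects every $F\in g(\mathcal{B})$ after $\mathcal{B}$ loses $\mathcal{D}(g^*_{s+t-i}(\mathcal{B}))$. Suppose $|E'\cap F|<t$ while $|E\cap F|\ge t$. Then $s\in F$, $|E\cap F|=t$, and $F\in g^*(\mathcal{B})$.
\begin{itemize}
\item If $|E\cup F|<s$, pick $j<s$ outside $E\cup F$ and apply (i) to $F$ with the shift $S_{js}$. This produces a generator of $\mathcal{B}$ contained in $(F\setminus\{s\})\cup\{j\}$, which meets $E$ in only $t-1$ elements, a contradiction.
\item Otherwise $E\cup F=[s]$, which forces $|F|=s+t-i$. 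So $F$ is exactly one of the removed generators.
\end{itemize}
Working at the level of $k$-sets, a member of $\mathcal{D}(F)$ is disjoint from $[s]\setminus F$ and hence from $E\setminus F$, so the remaining members of $\mathcal{B}$ still $t$-intersect everything in $\mathcal{D}(E')$. The cardinality formulas follow from the disjoint decomposition in Lemma~\ref{AKGS}(iii), whose proof uses only left-compression. Members of $\mathcal{D}(E')$ avoid $s$ and so are new sets: $|\mathcal{D}(E')|=\binom{n-s}{k-i+1}$, and distinct $E'$ give disjoint $\mathcal{D}(E')$ because $E\cap[s-1]$ determines them. Similarly each $\mathcal{D}(F)$ has $\binom{m-s}{\ell-(s+t-i)}$ elements.

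For (ii), I take $E\in g^*_i(\mathcal{A})$ and first show that some $F\in g^*(\mathcal{B})$ has $|E\cap F|=t$. If none existed, $E'$ would cross-$t$-intersect all of $g(\mathcal{B})$. By maximality we could then replace $E$ by $E'$ in $g(\mathcal{A})$, and repeating this for every generator containing $s$, including those of $\mathcal{B}$ by symmetry, would lower $s$ and contradict its minimality. The argument must handle the case where only one family has $s^{+}=s$: there, removing $s$ from the generators of that family alone already decreases $s$. Once such an $F$ exists, the shifting argument from (iii) forces $E\cup F=[s]$ and $|F|=s+t-i$, which gives the stated equivalence with $g^*_{s+t-i}(\mathcal{B})\neq\emptyset$.

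I expect the main obstacle to be this minimality and maximality bookkeeping in (ii). The difficulty is ensuring that removing $s$ from generators keeps both families maximal and left-compressed. I will first try arguing on $\mathcal{U}$-closures directly rather than on fixed generating sets.
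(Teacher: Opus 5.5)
The paper gives no proof of this lemma; it is quoted from Bao and Ji. Measured against the Ahlswede--Khachatrian-type argument, your plan has two genuine gaps. Both sit exactly where the hypothesis ``$g\in G_*$ with $s$ minimal'' is too weak.

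\textbf{Part (i).} You pass from $\mathcal{U}(S_{ij}(E))\cap\binom{[n]}{k}\subseteq\mathcal{F}$ to ``$S_{ij}(E)$ contains a member of $g(\mathcal{F})$''. This does not follow from $g(\mathcal{F})$ being an antichain. You would need a $k$-set $D$ with $D\cap[s]=S_{ij}(E)$, i.e.\ $n-s\geq k-|E|$, and nothing in the hypotheses guarantees this. In fact (i) and (ii) fail for some admissible $g$. Take $n=m=6$, $k=\ell=3$, $t=1$, $\mathcal{B}=\{\{1,2,3\},\{1,2,4\},\{1,2,5\}\}$ and $\mathcal{A}=\{A\in\binom{[6]}{3}: A\cap[2]\neq\emptyset\}\cup\{\{3,4,5\}\}$.
\begin{itemize}
\item These families are maximal, left-compressed and cross-intersecting.
\item Every generating set of $\mathcal{B}$ needs a set containing $5$ (to produce $\{1,2,5\}$ but not $\{1,2,6\}$), so the minimal $s$ is $5$.
\item $g(\mathcal{A})=\{\{2\},\{1,3\},\{1,4\},\{1,5\},\{3,4,5\}\}$ lies in $G_*(\mathcal{A})$ and has $s^+=5$.
\item Yet $S_{12}(\{2\})=\{1\}$ contains no generator, and $\{1,5\}\in g_2^*(\mathcal{A})$ while $g_4^*(\mathcal{B})=\emptyset$.
\end{itemize}
So the replacement you say you ``may need'' is mandatory, and the choice matters. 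The minimal traces $\{D\cap[s]:D\in\mathcal{F}\}$ repair (i) but still contain $\{1,5\}$ here, so they do not repair (ii). What works is to take $g(\mathcal{F})$ to be the minimal members of $\mathcal{S}_{\mathcal{F}}=\{S\subseteq[s]: |S|\leq k,\ \mathcal{U}(S)\cap\binom{[n]}{k}\subseteq\mathcal{F}\}$, with $\ell,m$ in place of $k,n$ for $\mathcal{B}$. This set lies in $G_*(\mathcal{F})$ with $s^+\leq s$, so the pair still realizes the minimal $s$. Your left-compression computation says precisely that $\mathcal{S}_{\mathcal{F}}$ is closed under $S_{ij}$ for $i<j\leq s$, which gives (i).

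\textbf{Part (ii).} Your contradiction does not work as written. The hypothesis that no $F\in g^*(\mathcal{B})$ has $|E\cap F|=t$ is available only for the single $E$ under consideration. So you cannot ``repeat this for every generator containing $s$'', let alone for the generators of $\mathcal{B}$. Moreover, replacing one $E$ by $E'=E\setminus\{s\}$ need not lower $s$: in the example, $\{1,5\}\mapsto\{1\}$ leaves $s=5$ because of $\{3,4,5\}$ and $g(\mathcal{B})$. Minimality of $s$ is the wrong lever; the contradiction must be local to $E$. Since $E'$ cross-$t$-intersects every member of $g(\mathcal{B})$, maximality puts every $k$-set containing $E'$ into $\mathcal{A}$. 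Thus $E'\in\mathcal{S}_{\mathcal{A}}$, contradicting the minimality of $E$ in $\mathcal{S}_{\mathcal{A}}$. (The Ahlswede--Khachatrian version, ``$\mathcal{A}\cup\mathcal{D}(E')$ is strictly larger'', additionally needs $\mathcal{D}(E')\neq\emptyset$, which is exactly what fails in the example.)

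\textbf{What goes through.} Once such an $F$ exists, your shifting argument correctly gives $E\cup F=[s]$ and $|F|=s+t-i$, and the reverse implication follows by symmetry. Part (iii) goes through as you wrote it, with one correction: members of $\mathcal{D}(E')$ are new not because they avoid $s$, but because a member of $\mathcal{A}$ with trace $E'$ on $[s]$ would contain a generator properly inside $E$. A minor point on the generator-level criterion: when only $\min\{m,n\}>k+\ell-t$ is assumed, the extensions of $E$ and $F$ cannot in general be made to share nothing outside $E\cap F$. You must allow up to $t-1-|E\cap F|$ further common points.
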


Similar to \cite[Lemma 3.6]{HLWZ2026}, we have the following result.

\begin{lemma}\label{transform}
	Let \(\mathcal{A}\subseteq \binom{[n]}{k}\) and \(\mathcal{B}\subseteq \binom{[n]}{\ell}\) be two maximal left-compressed nontrivial cross-$t$-intersecting families with $n\geq (t+1)(k-t+1)$ and $ k\geq \ell\geq t\geq 2$, $g(\mathcal{A})\in G_{*}(\mathcal{A})$, $g(\mathcal{B})\in G_{*}(\mathcal{B})$ such that $s:=\max\{s^{+}(g(\mathcal{A})),s^{+}(g(\mathcal{B}))\}$ is minimal. If $g_i^*(\mathcal{A})\neq \emptyset$, then $\mathcal{A}_{1}=\mathcal{A}\cup \mathcal{D}(g_{i}^{*}(\mathcal{A})')\subseteq \binom{[n]}{k}$ and $\mathcal{B}_{1}=\mathcal{B}\setminus \mathcal{D}(g_{s + t - i}^{*}(\mathcal{B}))\subseteq \binom{[n]}{\ell}$ are trivial if and only if $\mathcal{B}=\mathcal{H}_{n,\ell,s,t}$. 
\end{lemma}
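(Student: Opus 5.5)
We will prove both directions using the structure of $g(\mathcal{A})$ and $g(\mathcal{B})$ from Lemma~\ref{lemkl} together with left-compressedness.

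\emph{The direction ``$\Leftarrow$''.} Suppose $\mathcal{B}=\mathcal{H}_{n,\ell,s,t}$. Then the minimal generating set is $g(\mathcal{B})=\{[t]\}\cup\{[s]\setminus\{j\}: j\in[s]\}$, restricted to the minimal members. So $g^*(\mathcal{B})$ consists of the $(s-1)$-sets $[s]\setminus\{j\}$ with $j\le t$. Its partner $\mathcal{A}$ is then $\mathcal{I}_{n,k,s,t}$, which has generating sets $[t]\cup\{j\}$ for $t<j\le s$, so $g^*(\mathcal{A})=g^*_{t+1}(\mathcal{A})$. Lemma~\ref{lemkl}(ii) pairs $i=t+1$ with $s+t-i=s-1$. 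Deleting $\mathcal{D}(g^*_{s-1}(\mathcal{B}))$ removes exactly the sets of $\mathcal{B}$ that do not contain $[t]$, so $\mathcal{B}_1=\{B:[t]\subset B\}$. Adding $\mathcal{D}(g^*_{t+1}(\mathcal{A})')$ adds the sets whose trace on $[s]$ is $[t]$, so $\mathcal{A}_1$ consists of all $k$-sets containing $[t]$. Both families contain $[t]$, hence they are trivial.

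\emph{The direction ``$\Rightarrow$''.} Assume $\mathcal{A}_1,\mathcal{B}_1$ are trivial, so every member of $\mathcal{A}_1\cup\mathcal{B}_1$ contains a common $t$-set $T$.
\begin{enumerate}
\item \emph{Identify $T=[t]$.} Both $\mathcal{A}_1$ and $\mathcal{B}_1$ remain left-compressed: $\mathcal{A}_1$ is generated by $g(\mathcal{A})\cup g^*_i(\mathcal{A})'$, and the $\mathcal{D}$-decomposition of Lemma~\ref{AKGS}(iii) is stable under shifts inside $[s]$ by Lemma~\ref{lemkl}(i). Hence the common intersection is an initial segment, so $T=[t]$.
\item \emph{Locate the non-trivial members of $\mathcal{B}$.} Since $\mathcal{A}\subseteq\mathcal{A}_1$, every $A\in\mathcal{A}$ contains $[t]$. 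Nontriviality of the original pair therefore forces some $B\in\mathcal{B}$ with $[t]\not\subset B$, and every such $B$ lies in $\mathcal{D}(g^*_{s+t-i}(\mathcal{B}))$. Let $E\in g^*_{s+t-i}(\mathcal{B})$ with $[t]\not\subset E$. By Lemma~\ref{lemkl}(ii) there is $F\in g^*_i(\mathcal{A})$ with $|E\cap F|=t$ and $E\cup F=[s]$.
\item \emph{Pin down the sizes.} The set $F'=F\setminus\{s\}$ generates members of $\mathcal{A}_1$, so $F'\supseteq[t]$. Suppose $s\notin[t]$, that is $s>t$. Then $[t]\subseteq F$, and since $[t]\not\subset E$ we get $|E\cap[t]|<t$. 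Now use shifts: by Lemma~\ref{lemkl}(i) and maximality, the $\mathcal{D}$-sets of the other $A\in g(\mathcal{A})$ must still cross-$t$-intersect $E$, and $E$ must $t$-intersect every generator of $\mathcal{A}$, all of which contain $[t]$. Together these force $|E\cap[t]|=t-1$, $F=[t]\cup\{s\}$, $i=t+1$, and $E=[s]\setminus\{j\}$ for some $j\le t$.
\item \emph{Conclude.} Applying Lemma~\ref{lemkl}(i) again gives $[s]\setminus\{j\}\in g(\mathcal{B})$ for every $j\in[s]$. Every element of $g(\mathcal{B})$ either contains $[t]$ or is of this form. Since $\mathcal{A}$ consists of sets containing $[t]$ that also meet $[s]$ in at least $t+1$ elements, maximality yields $[t]\in g(\mathcal{B})$. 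Hence $\mathcal{B}=\mathcal{H}_{n,\ell,s,t}$.
\end{enumerate}

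\emph{Main obstacle.} The hardest step is 3: forcing $i=t+1$ and excluding larger $i$, where $E$ misses several elements of $[t]$. We expect to handle it with a shifting argument: shift $E$ toward $[t]$ using Lemma~\ref{lemkl}(i) and compare intersections with $F$, using $|E\cap F|=t$, $E\cup F=[s]$, and $|E|+|F|=s+t$.
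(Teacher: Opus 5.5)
Your ``$\Leftarrow$'' direction agrees with the paper's. The ``$\Rightarrow$'' direction has a genuine gap at Step 3, which is the heart of the lemma.

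The constraints you invoke there do not force $i=t+1$. These are: $E$ must $t$-intersect every generator of $\mathcal{A}$ (all containing $[t]$), together with $|E\cap F|=t$, $E\cup F=[s]$ and $|E|+|F|=s+t$. For example, take $t=2$, $s=6$, $F=\{1,2,5,6\}$ and $E=\{2,3,4,6\}$. Then $[2]\not\subset E$, $|E\cap F|=2$, $E\cup F=[6]$, and $E$ meets every $4$-subset of $[6]$ containing $[2]$ in at least two points. Yet $i=4$ and $|E|=4\neq s-1$. So comparing $E$ with $F$, or with the $\mathcal{D}$-sets of $\mathcal{A}$, cannot close the step. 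What rules out this configuration lies entirely on the $\mathcal{B}$ side: here it is the shifted set $S_{5,6}(E)=\{2,3,4,5\}$.

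The missing idea is to apply Lemma \ref{lemkl}(i) to $E$ itself, using the shift that removes $s$, together with the triviality of $\mathcal{B}_1$.
\begin{itemize}
\item Any generator of $\mathcal{B}$ not containing $s$ lies outside $g^*(\mathcal{B})\supseteq g^*_{s+t-i}(\mathcal{B})$. Hence it generates members of $\mathcal{B}_1$ and must contain $[t]$.
\item For each $j\in[s]\setminus E$, the set $S_{js}(E)=(E\setminus\{s\})\cup\{j\}$ avoids $s$ and contains a member of $g(\mathcal{B})$. Therefore $[t]\subseteq (E\setminus\{s\})\cup\{j\}$.
\item This holds for every $j\in[s]\setminus E$, while $[t]\not\subset E$. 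This forces $[s]\setminus E=\{j_0\}$ with $j_0\in[t]$, so $|E|=s-1$ and $i=t+1$ at once.
\end{itemize}
The paper first notes that $[t]\in g(\mathcal{B})$, by maximality, since every member of $\mathcal{A}$ contains $[t]$. Since $g(\mathcal{B})$ consists of minimal sets, no other generator contains $[t]$. So the argument applies to every member of $g^*_{s+t-i}(\mathcal{B})$, giving $g(\mathcal{B})\subseteq\{[t]\}\cup\{[s]\setminus\{j\}: j\in[t]\}$.

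This also corrects your Step 4, where the claim that $[s]\setminus\{j\}\in g(\mathcal{B})$ for every $j\in[s]$ is false. For $j>t$, such a set properly contains $[t]\in g(\mathcal{B})$, so it is not minimal. Moreover, Lemma \ref{lemkl}(i) only shifts leftwards, so from $[s]\setminus\{j_0\}$ it cannot produce $[s]\setminus\{j\}$ with $j<j_0$. The remaining $j\in[t]$ come from maximality instead. Every $A\in\mathcal{A}$ contains $[t]$ and must meet $[s]$ in at least $t+1$ points, so $\mathcal{A}\subseteq\mathcal{I}_{n,k,s,t}$. This family cross-$t$-intersects $\mathcal{H}_{n,\ell,s,t}$, which yields $\mathcal{B}=\mathcal{H}_{n,\ell,s,t}$.
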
	

\proof Let $\mathcal{A}_{1}=\mathcal{A}\cup \mathcal{D}(g_{i}^{*}(\mathcal{A})')$ and $\mathcal{B}_{1}=\mathcal{B}\backslash \mathcal{D}(g_{\mathrm{s}+t-i}^{*}(\mathcal{B}))$. Clearly, $ g(\mathcal{A})\cup(g_{i}^{*}(\mathcal{A}))'\in G(\mathcal{A}_{1})$ and $g(\mathcal{B})\backslash g_{\mathrm{s}+t-i}^{*}(\mathcal{B})\in G(\mathcal{B}_{1})$. Suppose that $\mathcal{A}$ and $\mathcal{B}$ are nontrivial, but $\mathcal{A}_{1}$ and $\mathcal{B}_{1}$ are trivial. Then $[t]\subset F$ for all $F\in (g(\mathcal{A})\cup g(\mathcal{B})$)$\backslash g_{\mathrm{s}+t-i}^{*}(\mathcal{B})$. By the assumption that $\mathcal{A}$ and $\mathcal{B}$ are nontrivial maximal cross-$t$-intersecting, it follows that $[t]\in g(\mathcal{B})$. Therefore, $[t]\not\subset F$ for each $F\in g_{s+t-i}^{*}(\mathcal{B})$. Moreover, for $F\in g_{s+t-i}^{*}(\mathcal{B})$, by (i) of Lemma \ref{lemkl}, we have $[t]\subset S_{js}(F)$ for each $j\in[s]\backslash F$, which implies $F=[s]\backslash \{j\}$ for some $j\in[t]$. Therefore, $i=t+1$, $g(\mathcal{A})=\{E\in \binom{[s]}{t+1}\colon [t]\subset E\}$ and $g(\mathcal{B})=\{E\in \binom{[s]}{s-1}\colon |E\cap [t]|=t-1\}\cup \{[t]\}$. 
It follows that $\mathcal{A}=\mathcal{I}_{n,k,s,t}$ and $\mathcal{B}=\mathcal{H}_{n,\ell,s,t}$. 

Suppose that $\mathcal{B}=\mathcal{H}_{n,\ell,s,t}$ for some $s\leq \ell+1$. By the assumption that $\mathcal{A}$ and $\mathcal{B}$ are maximal nontrivial cross-$t$-intersecting families, we have $\mathcal{A}=\mathcal{I}_{n,k,s,t}$. Then $\mathcal{A}_1=\{A\in \binom{[n]}{k}\colon [t]\subset A\}$ and $\mathcal{B}_1=\{B\in \binom{[n]}{\ell}\colon [t]\subset B\}$, 
that is, $\mathcal{A}_1$ and $\mathcal{B}_1$ are trivial. This completes the proof. \qed



\section{Proofs of some basic inequalities}

In this section, we give some basic inequalities, which are crucial to our proof of Theorem \ref{MainResult}. 

The following inequality will be used frequently, its proof is straightforward and omitted here.

\begin{lemma}\label{trivialInEq}
Let $x,y,z,u,a,b$ be positive real numbers with $xu-a\geq 0, yu-b>0$ and $z\geq u$. Then $\frac{xz-a}{yz-b}\geq \frac{xu-a}{yu-b}$ if and only if  $ay\geq bx$, and $\frac{xz-a}{yz-b}\leq \frac{xu-a}{yu-b}$ if and only if  $ay\leq bx$. 
\end{lemma}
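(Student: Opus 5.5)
The plan is to clear denominators and reduce the comparison to the sign of a single linear expression in $z$. Since $yu-b>0$ and $z\geq u$, we have $yz-b\geq yu-b>0$, so both denominators are positive. Hence
\[
\frac{xz-a}{yz-b}\geq \frac{xu-a}{yu-b}
\]
is equivalent to $(xz-a)(yu-b)\geq (xu-a)(yz-b)$.

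Expanding both sides, the terms $xyzu$ and $ab$ cancel. What remains is
\[
-bxz-ayu\geq -bxu-ayz,
\]
which rearranges to $(z-u)(ay-bx)\geq 0$.

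When $z>u$ this holds if and only if $ay\geq bx$, and the reverse inequality likewise holds if and only if $ay\leq bx$.

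The only subtle point is the degenerate case $z=u$. There both sides are equal, so both inequalities hold trivially and the stated equivalences do not literally apply. The lemma is evidently intended for $z>u$, or else as the statement that $ay\geq bx$ suffices for the first inequality and $ay\leq bx$ suffices for the second; I would phrase the proof to cover these readings.

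The hypothesis $xu-a\geq 0$ is not needed for the algebra. It only guarantees that the ratios are nonnegative, which is convenient where the lemma is applied to sizes of families.

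No step is a real obstacle; the proof is a direct cross-multiplication.
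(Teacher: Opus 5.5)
Your cross-multiplication argument, which reduces the comparison to the sign of $(z-u)(ay-bx)$ over the positive product $(yz-b)(yu-b)$, is correct and is exactly the straightforward verification the paper omits. Your caveat about $z=u$ is also right, since the ``only if'' directions genuinely need $z>u$; this is harmless, because the paper only invokes the sufficiency direction (e.g.\ to bound ratios such as $\frac{n-3}{n-\ell}$ above by their value at $n=3\ell-3$).
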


\begin{lemma}\label{hn,l,k,3,2}
	Let $n,k$ and $\ell$ be integers with $k> \ell\geq 3$ and $n\geq 3k-3$. Then 
\begin{align*}
& h_{n,\ell,k,3,2}>	h_{n,k,\ell,3,2},\\
& h_{n,\ell,k,\ell+1,2}> 	h_{n,k,\ell,k+1,2}.
\end{align*}	
\end{lemma}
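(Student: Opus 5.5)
The plan is to write both sides explicitly from the formula for $h_{n,k,\ell,s,t}$ and reduce each inequality to a comparison of simple binomial ratios. For $t=2$,
$$h_{n,k,\ell,s,2}=\left(\binom{n-2}{k-2}+2\binom{n-s}{k-s+1}\right)\left(\binom{n-2}{\ell-2}-\binom{n-s}{\ell-2}\right).$$

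\textbf{First inequality ($s=3$).} I apply Pascal's rule $\binom{n-2}{j-2}-\binom{n-3}{j-2}=\binom{n-3}{j-3}$ and $\binom{n-2}{j-2}=\binom{n-3}{j-2}+\binom{n-3}{j-3}$. Writing $a_j=\binom{n-3}{j-3}$, this gives
$$h_{n,\ell,k,3,2}=\left(3\binom{n-3}{\ell-2}+a_\ell\right)a_k,\qquad h_{n,k,\ell,3,2}=\left(3\binom{n-3}{k-2}+a_k\right)a_\ell.$$
Dividing both by $a_ka_\ell>0$, the claim becomes
$$\frac{3\binom{n-3}{\ell-2}}{a_\ell} > \frac{3\binom{n-3}{k-2}}{a_k},\quad\text{that is,}\quad \frac{n-\ell}{\ell-2}>\frac{n-k}{k-2}.$$
This holds because $x\mapsto (n-x)/(x-2)$ is strictly decreasing and $\ell<k$. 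This part is routine and barely uses the lower bound on $n$.

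\textbf{Second inequality ($s=\ell+1$ and $s=k+1$).} Set
$$X=\binom{n-2}{k-2},\quad Y=\binom{n-2}{\ell-2},\quad P=\binom{n-\ell-1}{k-2},\quad Q=\binom{n-k-1}{\ell-2}.$$
Since $\binom{n-s}{k-s+1}=1$ when $s=k+1$, we get
$$h_{n,\ell,k,\ell+1,2}=(Y+2)(X-P),\qquad h_{n,k,\ell,k+1,2}=(X+2)(Y-Q).$$
Expanding, the claim is equivalent to
$$(XQ-YP)+2(X-Y)+2(Q-P)>0.$$

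The key tool is the multinomial identity
$$\binom{n-2}{k-2}\binom{n-k}{\ell-2}=\binom{n-2}{\ell-2}\binom{n-\ell}{k-2}=:M,$$
combined with $\binom{n-k-1}{\ell-2}=\frac{n-k-\ell+2}{n-k}\binom{n-k}{\ell-2}$ and the analogous formula for $P$. These give
$$XQ-YP=M(n-k-\ell+2)\left(\frac{1}{n-k}-\frac{1}{n-\ell}\right)=\frac{M(n-k-\ell+2)(k-\ell)}{(n-k)(n-\ell)}>0.$$
Next, $X\ge P$ trivially and $Q\ge 0$, so $2(X-Y)+2(Q-P)\ge -2Y$. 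It therefore suffices to show $XQ-YP>2Y$.

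\textbf{Main obstacle.} The crude step is proving $XQ-YP>2Y$, which is equivalent to
$$\frac{\binom{n-\ell}{k-2}(n-k-\ell+2)(k-\ell)}{(n-k)(n-\ell)}>2.$$
Here $n\ge 3k-3$ is used, and the small cases ($k=\ell+1$, $\ell=3$) need checking. I would note that $n\ge 3k-3$ gives $n-\ell\ge 2k-1$. Then $\binom{n-\ell}{k-2}\ge\binom{n-\ell}{2}$ when $k\ge 4$, and $\binom{n-\ell}{2}$ should already dominate $2(n-k)(n-\ell)/(n-k-\ell+2)$, since the latter is $O(n)$ once $n-k-\ell+2\ge k-1\ge 3$. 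The case $k=\ell+1=4$ has $k-2=2$ and would be checked directly. If this bound turns out too tight in some boundary case, I would instead keep the term $2(X-Y)$. Since $n\ge 3k-3$ implies $k+\ell-4<n-2$, we have $X>Y$, and indeed $X-Y$ is large; I would use it to absorb $2P$ rather than discarding it.
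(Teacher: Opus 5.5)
Your proof is correct. For the first inequality it is the paper's argument: the paper writes the ratio as $\bigl(\frac{3(n-2)}{k-2}-2\bigr)/\bigl(\frac{3(n-2)}{\ell-2}-2\bigr)$, which is your monotonicity of $(n-x)/(x-2)$.

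For the second inequality, you and the paper share the expansion $(XQ-YP)+2(X-Y)+2(Q-P)$ and the same key identity, which the paper states as $XQ=\frac{n-\ell}{n-k}YP$. You differ in how you allocate the error terms.

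The paper lets $XQ-YP$ absorb only $2P$, via $\frac{1}{n-k}YP\ge\frac{1}{n-k}\binom{n-2}{2}P\ge 2P$, and keeps $2(X-Y)+2Q>0$. Because $Y\ge\binom{n-2}{2}$ needs $\ell\ge 4$, the paper must treat $\ell=3$ by a separate computation valid for all $k$.

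You instead discard $2(X-P)+2Q\ge 0$ and make $XQ-YP$ absorb $2Y$. Since $XQ-YP=\frac{k-\ell}{n-k}YP$, your condition is exactly $(k-\ell)\binom{n-\ell-1}{k-2}>2(n-k)$. This is the clean way to make your ``$O(n)$'' remark rigorous for all $n\ge 3k-3$, which the remark alone does not do. For $k\ge 5$ the condition follows from $\binom{n-\ell-1}{k-2}\ge\binom{n-k}{2}$ together with $n-k-1\ge 2k-4>4$.

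So your case split is by $k$ rather than by $\ell$, and it is uniform in $\ell$, leaving only $(k,\ell)=(4,3)$. The price is that your bookkeeping is lossier. At $(n,k,\ell)=(9,4,3)$ you get $XQ-YP=2Y=14$ exactly, so the reduction genuinely fails there. The direct check for $k=4$ must therefore retain the discarded terms, as you anticipated. Doing so gives the difference $\frac{(n-2)(n-5)}{2}+4(n-5)>0$.
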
	

\proof Since $3\leq \ell< k$, by the definition of $h_{n,k,\ell,s,2}$ in (1.1), we have 
\begin{align*}
	\displaystyle \frac{h_{n,k,\ell,3,2}}{h_{n,\ell,k,3,2}}&=\frac{(2\binom{n - 3}{k - 2}+\binom{n - 2}{k - 2})\binom{n - 3}{\ell - 3}}{(2\binom{n - 3}{\ell - 2}+\binom{n - 2}{\ell - 2})\binom{n - 3}{k - 3}}=\frac{\frac{2(n-k)}{k-2}+\frac{n-2}{k-2}}{\frac{2(n-\ell)}{\ell-2}+\frac{n-2}{\ell-2}}=\frac{\frac{3(n-2)}{k-2}-2}{\frac{3(n-2)}{\ell-2}-2}< 1.
\end{align*}
Hence $ h_{n,k,\ell,3,2}< h_{n,\ell,k,3,2}$. 

Also, by the definition of $h_{n,k,\ell,s,2}$ in (1.1), we have
\begin{align*}
	&	h_{n,\ell,k,\ell+1,2}-	h_{n,k,\ell,k+1,2}\\
	&={\small \left (\binom{n-2}{\ell-2}+2\right)\left (\binom{n-2}{k-2}-\binom{n-\ell-1}{k-2}\right)-\left (\binom{n-2}{k-2}+2\right)\left (\binom{n-2}{\ell-2}-\binom{n-k-1}{\ell-2}\right)}\\
	& =2\left(\binom{n-2}{k-2}-\binom{n-2}{\ell-2}-\binom{n-\ell-1}{k-2}+\binom{n-k-1}{\ell-2} \right)+\binom{n-2}{k-2}\binom{n-k-1}{\ell-2}\\
	& \quad -\binom{n-2}{\ell-2}\binom{n-\ell-1}{k-2}.
\end{align*}

For $\ell=3$, we have 
\begin{align*}
	&	h_{n,\ell,k,\ell+1,2}-	h_{n,k,\ell,k+1,2}\\
& =(n-k+1)\binom{n-2}{k-2}-n\binom{n-4}{k-2}-2\binom{n-2}{1}+2\binom{n-k-1}{1} \\
& >\binom{n-4}{k-2}+\frac{(n-2)(n-3)}{n-k-1}\binom{n-4}{k-2}-n\binom{n-4}{k-2}-2(k-1)\\
& >\binom{n-4}{k-2}-2(k-1)\geq \binom{3k-7}{2}-2(k-1)=\frac{(9k-40)(k-1)+20}{2} >0.
\end{align*}

For $\ell\geq 4$, since 
\begin{align*}
\binom{n-2}{k-2}\binom{n-k-1}{\ell-2}& =\frac{n-\ell}{n-k} \binom{n-2}{\ell-2}\binom{n-\ell-1}{k-2}\\
& \geq \binom{n-2}{\ell-2}\binom{n-\ell-1}{k-2} + \frac{1}{n-k} \binom{n-2}{\ell-2}\binom{n-\ell-1}{k-2}\\
& \geq \binom{n-2}{\ell-2}\binom{n-\ell-1}{k-2} + \frac{1}{n-k} \binom{n-2}{2}\binom{n-\ell-1}{k-2}\\
& \geq \binom{n-2}{\ell-2}\binom{n-\ell-1}{k-2} + \frac{n-2}{2} \binom{n-\ell-1}{k-2}\\
& \geq \binom{n-2}{\ell-2}\binom{n-\ell-1}{k-2} + \frac{3k-5}{2} \binom{n-\ell-1}{k-2}\\
& \geq \binom{n-2}{\ell-2}\binom{n-\ell-1}{k-2} + 2 \binom{n-\ell-1}{k-2},
\end{align*}
we have 
\begin{align*}
	&	h_{n,\ell,k,\ell+1,2}-	h_{n,k,\ell,k+1,2}\\
&=2\left(\binom{n-2}{k-2}-\binom{n-2}{\ell-2}+\binom{n-k-1}{\ell-2} \right)+\binom{n-2}{k-2}\binom{n-k-1}{\ell-2}\\
	& \quad -\binom{n-2}{\ell-2}\binom{n-\ell-1}{k-2}-2\binom{n-\ell-1}{k-2}\\
	& >0. 
\end{align*}

This completes the proof. \qed

\begin{lemma}\label{s=4}
Let $n,k$ and $\ell$ be positive integers with $k\geq 4$, $\ell\geq 3$ and $n\geq \max\{3k-3,3\ell-3\}$. Then $h_{n,k,\ell,4,2}< 	h_{n,k,\ell,k+1,2}$.
\end{lemma}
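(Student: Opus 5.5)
The plan is to compare the two products directly, factor by factor, and reduce the claim to one ratio inequality. By (1.1) with $t=2$,
\begin{align*}
h_{n,k,\ell,4,2}&=\left(\tbinom{n-2}{k-2}+2\tbinom{n-4}{k-3}\right)\left(\tbinom{n-2}{\ell-2}-\tbinom{n-4}{\ell-2}\right),\\
h_{n,k,\ell,k+1,2}&=\left(\tbinom{n-2}{k-2}+2\right)\left(\tbinom{n-2}{\ell-2}-\tbinom{n-k-1}{\ell-2}\right).
\end{align*}
The first factor is larger for $s=4$ and the second is larger for $s=k+1$. So it suffices to prove
\[
\frac{\binom{n-2}{\ell-2}-\binom{n-k-1}{\ell-2}}{\binom{n-2}{\ell-2}-\binom{n-4}{\ell-2}}>\frac{\binom{n-2}{k-2}+2\binom{n-4}{k-3}}{\binom{n-2}{k-2}+2}.
\]

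For the right-hand side I use the crude bound $\mathrm{RHS}<1+2\binom{n-4}{k-3}/\binom{n-2}{k-2}$, where
\[
\frac{\binom{n-4}{k-3}}{\binom{n-2}{k-2}}=\frac{(k-2)(n-k)}{(n-2)(n-3)}.
\]
This expression is decreasing in $n$ for $n\ge 3k-3$ (check it, or apply Lemma \ref{trivialInEq}). At $n=3k-3$ it equals $\frac{2(k-2)(2k-3)}{(3k-5)(3k-6)}=\frac{2(2k-3)}{3(3k-5)}$, which is below $\frac{4}{9}+\varepsilon$ for small $\varepsilon$. Hence
\[
\mathrm{RHS}<1+\frac{4(2k-3)}{3(3k-5)}\le 1+\frac{20}{21}\quad\text{for } k\ge4,
\]
and this stays comfortably below $2$.

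For the left-hand side I expand both numerator and denominator by Pascal's rule:
\[
\tbinom{n-2}{\ell-2}-\tbinom{n-4}{\ell-2}=\tbinom{n-3}{\ell-3}+\tbinom{n-4}{\ell-3},\qquad
\tbinom{n-2}{\ell-2}-\tbinom{n-k-1}{\ell-2}=\sum_{j=n-k-1}^{n-3}\tbinom{j}{\ell-3}.
\]
This gives
\[
\mathrm{LHS}=1+\frac{\sum_{j=n-k-1}^{n-5}\binom{j}{\ell-3}}{\binom{n-3}{\ell-3}+\binom{n-4}{\ell-3}}.
\]
When $\ell=3$ every binomial equals $1$, so $\mathrm{LHS}=1+\frac{k-3}{2}$. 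For $k\ge 5$ this is at least $2>\mathrm{RHS}$. For $k=4$ it equals $3/2$, so I instead compare with the exact right-hand side, $1+2\binom{n-4}{1}/(\binom{n-2}{2}+2)$, which is clearly below $3/2$ once $n\ge 9$. (At $n=9$ the products are $62<69$.)

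For $\ell\ge4$ I lower bound the extra sum. Each ratio $\binom{j}{\ell-3}/\binom{n-3}{\ell-3}$ with $j\ge n-k-1$ is at least $\left(\frac{n-k-\ell+3}{n-\ell}\right)^{\ell-3}$. I then need $k-3$ times this quantity to beat roughly $2\cdot\frac{2(k-2)(n-k)}{(n-2)(n-3)}$. The hypothesis $n\ge\max\{3k-3,3\ell-3\}$ keeps $\frac{n-k-\ell+3}{n-\ell}$ bounded below, but the exponent $\ell-3$ may be large since $\ell$ is not bounded by $k$ here. To avoid a crude power bound, I plan a cleaner comparison: the sum contains at least the term $j=n-5$ when $k\ge4$, and
\[
\tbinom{n-5}{\ell-3}\ \ge\ \tfrac{n-\ell-1}{n-4}\tbinom{n-4}{\ell-3}\ \ge\ \tfrac{n-\ell-1}{n-3}\tbinom{n-3}{\ell-3}\cdot\tfrac{n-\ell}{n-4}.
\]
Together with $n\ge 3\ell-3$, this gives a ratio of about $4/9$ for $k=4$. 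That is roughly matched against the right-hand side term $2(k-2)(n-k)/((n-2)(n-3))$, which is $\le 4/n$ for $k=4$ and hence small since $n\ge 9$. For larger $k$ there are more terms in the sum, and I would pair them against the growth of the right-hand side, possibly splitting into the cases $\ell\le k$ and $\ell>k$.

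The main obstacle is the regime where $\ell$ is large relative to $n$, near $n=3\ell-3$. There the terms $\binom{j}{\ell-3}$ decay quickly in $j$, so I will need the precise ratio estimates above rather than crude bounds.
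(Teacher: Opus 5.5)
Your reduction to the ratio inequality is correct, and the case $\ell=3$ goes through. The value at $n=3k-3$ is actually $\frac{2k-3}{3(3k-5)}$, so your extra factor $2$ only weakens a valid bound. The gap is the case $\ell\ge 4$, which is the real content of the lemma. There you give a plan rather than a proof, and the estimates the plan uses are too weak exactly where the inequality is tight.

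Take $k=\ell=4$, $n=9$. Then $h_{9,4,4,5,2}=23\cdot 15=345$ and $h_{9,4,4,4,2}=31\cdot 11=341$. Your left side is $1+\frac{4}{11}\approx 1.364$ and the exact right side is $\frac{31}{23}\approx 1.348$. Your bound $1+2\binom{n-4}{k-3}/\binom{n-2}{k-2}=1+\frac{10}{21}\approx 1.476$ already exceeds the left side. The same failure occurs for $k=\ell=4$, $n=10$. It also occurs for $k=4$, $\ell=5$, $n=12$: the left increment is $\frac{21}{64}$, your bound gives $\frac{16}{45}$, and the exact value is $\frac{14}{47}$ (the products are $3995$ vs.\ $3904$). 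Two further slips: the claim $\frac{4(n-4)}{(n-2)(n-3)}\le\frac4n$ is false for $n\ge 7$. And for $k=4$ at $n=3\ell-3$, the left increment is about $\frac{4}{15}$, not $\frac49$, since you divide by $\binom{n-3}{\ell-3}+\binom{n-4}{\ell-3}$. So ``roughly matched'' cannot be turned into a proof with these bounds.

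To close the gap, keep the exact right side $1+\frac{2\binom{n-4}{k-3}-2}{\binom{n-2}{k-2}+2}$ and split into cases. This is what the paper effectively does by working with the exact difference $h_{n,k,\ell,k+1,2}-h_{n,k,\ell,4,2}$. For $k=4$ your inequality is equivalent to
\[
\left(\binom{n-2}{2}+2\right)\binom{n-5}{\ell-3}>2(n-5)\left(\binom{n-3}{\ell-3}+\binom{n-4}{\ell-3}\right).
\]
For $\ell=4$ this reduces to $n^2-13n+38>0$, which equals $2$ at $n=9$. For $\ell=5$ it reduces to the cubic $n^3-19n^2+104n-188>0$ on $n\ge 12$. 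For $\ell\ge 6$ it follows from $\frac{(n-3)(n-4)}{(n-\ell)(n-\ell-1)}\le\frac94$ and $\frac{n-4}{n-\ell-1}\le\frac32$, via Lemma~\ref{trivialInEq} and $n\ge 3\ell-3$.

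The cases $k=5$ and $k\ge 6$ also need genuinely two-parameter estimates, because $\ell$ may exceed $k$. For $k\ge 6$ the paper combines $\binom{n-2}{k-2}\ge\frac{13}{3}\binom{n-4}{k-3}$ with
\[
13\Bigl(\binom{n-5}{\ell-3}+\binom{n-6}{\ell-3}+\binom{n-7}{\ell-3}\Bigr)>6\Bigl(\binom{n-3}{\ell-3}+\binom{n-4}{\ell-3}\Bigr).
\]
It proves the latter by induction on $n$ through Pascal's rule, starting from the base $n=3\ell-3$. Your closing remark about pairing terms ``against the growth of the right-hand side'' does none of this.
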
	

\proof By the definition of $h_{n,k,\ell,s,2}$ in (1.1), we have 
\begin{align*}
	h_{n,k,\ell,k+1,2}-	h_{n,k,\ell,4,2}&=
 \left (\binom{n-2}{k-2}+2\right)\left (\binom{n-2}{\ell-2}-\binom{n-k-1}{\ell-2}\right)\\ & \quad -\left (\binom{n-2}{k-2}+2\binom{n-4}{k-3}\right)\left (\binom{n-2}{\ell-2}-\binom{n-4}{\ell-2}\right)\\
& = \binom{n-2}{k-2} \left(\binom{n-4}{\ell-2}-\binom{n-k-1}{\ell-2}\right)+2 \left(\binom{n-2}{\ell-2}-\binom{n-k-1}{\ell-2}\right)\\
& \quad -2\binom{n-4}{k-3}\left (\binom{n-2}{\ell-2}-\binom{n-4}{\ell-2}\right).
\end{align*}
We distinguish according to values of $k,\ell$. 

Case 1: $k=4$ and $\ell \geq 6$. Simple computation shows that 
\begin{align*}
 &	h_{n,k,\ell,k+1,2}-	h_{n,k,\ell,4,2}\\
 & = \binom{n-2}{2}\binom{n-5}{\ell-3}+2\binom{n-3}{\ell-3}+2\binom{n-4}{\ell-3}+2\binom{n-5}{\ell-3}-2(n-4)\left( \binom{n-3}{\ell-3}+\binom{n-4}{\ell-3}\right)\\
& =\left( \frac{n^2-5n+10}{2}-2(n-5)\left( \frac{(n-3)(n-4)}{(n-\ell)(n-\ell-1)}+\frac{n-4}{n-\ell-1}\right )  \right )\binom{n-5}{\ell-3}\\
& \geq \left( \frac{n^2-5n+10}{2}-2(n-5)\left( \frac{(3\ell-6)(3\ell-7)}{(
	2\ell-3)(2\ell-4)}+\frac{3\ell-7}{2\ell-4}\right )  \right )\binom{n-5}{\ell-3}\\
& \geq \left( \frac{n^2-5n+10}{2}-2(n-5)\cdot (\frac{3}{2}\cdot \frac{3}{2}+\frac{3}{2})\right ) \binom{n-5}{\ell-3}\\
& >0,
\end{align*}
where the last inequality holds because $n\geq \max\{3k-3,3\ell-3\}\geq 15$, and the other two inequalities hold by applying Lemma \ref{trivialInEq}.

Case 2: $k=4$ and $\ell = 5$. Since $n\geq \max\{3k-3,3\ell-3\}\geq 12$, we have that
\begin{align*}
	&	h_{n,k,\ell,k+1,2}-	h_{n,k,\ell,4,2}\\
	& = \binom{n-2}{2}\binom{n-5}{2}+2\binom{n-3}{2}+2\binom{n-4}{2}+2\binom{n-5}{2}-2(n-4)\left( \binom{n-3}{2}+\binom{n-4}{2}\right)\\
	& = \frac{(n-2)(n-3)(n-5)(n-6)-8(n-4)^3}{4}+2\binom{n-3}{2}+2\binom{n-4}{2}+2\binom{n-5}{2} \\
	& >0.
\end{align*}

Case 3: $k=4$ and $\ell = 4$. Since $n\geq \max\{3k-3,3\ell-3\}\geq 9$, we have that
\begin{align*}
	&	h_{n,k,\ell,k+1,2}-	h_{n,k,\ell,4,2}\\
	& = \binom{n-2}{2}(n-5)+2\binom{n-3}{1}+2\binom{n-4}{1}+2\binom{n-5}{1}-2(n-4)(2n-7)\\
	& = \frac{n^3-18n^2+103n-190}{2}\\
	& >0.
\end{align*}

Case 4: $k=4$ and $\ell = 3$. Since $n\geq \max\{3k-3,3\ell-3\}\geq 9$, we have that
\begin{align*}
	&	h_{n,k,\ell,k+1,2}-	h_{n,k,\ell,4,2} = \binom{n-2}{2}+6-2(n-4)\cdot 2= \frac{n^2-13n+50}{2} >0.
\end{align*}

Case 5: $k=5$ and $\ell \geq 6$. Since $n\geq \max\{3k-3,3\ell-3\}\geq 15$, we have that
\begin{align*}
\frac{\binom{n-2}{3}}{2\binom{n-4}{2}}=\frac{(n-2)(n-3)}{6(n-5)}\geq \frac{13\cdot 12}{6\cdot 10}=\frac{13}{5}.
\end{align*}
By Lemma \ref{trivialInEq}, we have that 
\begin{align*}
& \frac{13(n-\ell)(n-\ell-1)(2n-\ell-7)}{5(n-4)(n-5)(2n-\ell-3)}\geq \frac{13(2\ell-3)(2\ell-4)(5\ell-13)}{5(3\ell-7)(3\ell-8)(5\ell-9)}.
\end{align*}
Simple computation shows that 
\begin{align*}
	& 13(2\ell-3)(2\ell-4)(5\ell-13)-5(3\ell-7)(3\ell-8)(5\ell-9)=35\ell^3-56\ell^2-279\ell+492>0.
\end{align*}
It follows that $13(n-\ell)(n-\ell-1)(2n-\ell-7)>5(n-4)(n-5)(2n-\ell-3)$ and 

\begin{align*}
& 13 \left (\binom{n-5}{\ell-3}+\binom{n-6}{\ell-3}\right)-5 \left (\binom{n-3}{\ell-3}+\binom{n-4}{\ell-3}\right)	\\
&=  \binom{n-6}{\ell-3} \left( 13+ 13\cdot \frac{n-5}{n-\ell-2}-5\cdot \frac{(n-3)(n-4)(n-5)}{(n-\ell)(n-\ell-1)(n-\ell-2)}-5\cdot \frac{(n-4)(n-5)}{(n-\ell-1)(n-\ell-2)}\right)\\
&=  \binom{n-6}{\ell-3} \cdot \frac{13(n-\ell)(n-\ell-1)(2n-\ell-7)-5(n-4)(n-5)(2n-\ell-3)}{(n-\ell)(n-\ell-1)(n-\ell-2)}\\
&>0.
\end{align*}
Hence, 
\begin{align*}
& h_{n,k,\ell,k+1,2}-	h_{n,k,\ell,4,2}\\
&= \binom{n-2}{3}\left (\binom{n-5}{\ell-3}+\binom{n-6}{\ell-3}\right)+2\left(\binom{n-2}{\ell-2}-\binom{n-6}{\ell-2}\right)-2\binom{n-4}{2} \left (\binom{n-3}{\ell-3}+\binom{n-4}{\ell-3}\right)\\
& \geq 2\binom{n-4}{2} \left(\frac{13}{5}\left(\binom{n-5}{\ell-3}+\binom{n-6}{\ell-3}\right)-\left(\binom{n-3}{\ell-3}+\binom{n-4}{\ell-3}\right)\right)+2\left(\binom{n-2}{\ell-2}-\binom{n-6}{\ell-2}\right) \\
&>0
\end{align*}

Case 6: $k=5$ and $\ell =3$. Since $n\geq \max\{3k-3,3\ell-3\}\geq 12$, we have that
\begin{align*}
	& h_{n,k,\ell,k+1,2}-	h_{n,k,\ell,4,2}=2\binom{n-2}{3}+2\cdot 4-2\binom{n-4}{2}\cdot 2>0.
\end{align*}

Case 7: $k=5$ and $\ell =4$. Since $n\geq \max\{3k-3,3\ell-3\}\geq 12$, we have that
\begin{align*}
	& h_{n,k,\ell,k+1,2}-	h_{n,k,\ell,4,2}\\
	& =(2n-11)\binom{n-2}{3}+2\left(\binom{n-2}{2}-\binom{n-6}{2}\right) -2(2n-7)\binom{n-4}{2}\\
	& >(2n-11)\binom{n-2}{3}-2(2n-7)\binom{n-4}{2}\\
	& = \frac{(n-4)}{6}\cdot (2n^3-33n^2+169n-276)\\
	& >0.
\end{align*}

Case 8: $k=5$ and $\ell =5$. Since $n\geq \max\{3k-3,3\ell-3\}\geq 12$, we have that
\begin{align*}
	& h_{n,k,\ell,k+1,2}-	h_{n,k,\ell,4,2}\\
	&  =\binom{n-2}{3}\left (\binom{n-5}{2}+\binom{n-6}{2}\right)+2\left(\binom{n-2}{3}-\binom{n-6}{3}\right)-2\binom{n-4}{2} \left (\binom{n-3}{2}+\binom{n-4}{2}\right)\\
	& >\binom{n-2}{3}\left (\binom{n-5}{2}+\binom{n-6}{2}\right)-2\binom{n-4}{2} \left (\binom{n-3}{2}+\binom{n-4}{2}\right)\\
	& = \frac{(n-4)}{6}\cdot (n^4-23n^3+180n^2-588n+696)\\
	& >0.
\end{align*}

Case 9: $k\geq 6$. Then 
\begin{align*}
	& h_{n,k,\ell,k+1,2}-	h_{n,k,\ell,4,2}\\
	&  =\binom{n-2}{k-2}\left (\binom{n-5}{\ell-3}+\binom{n-6}{\ell-3}+\cdots+\binom{n-k-1}{\ell-3}\right)+2\left(\binom{n-2}{\ell-2}-\binom{n-k-1}{\ell-2}\right)\\
	& \quad -2\binom{n-4}{k-3} \left (\binom{n-3}{\ell-3}+\binom{n-4}{\ell-3}\right)\\
	& >\binom{n-2}{k-2}\left (\binom{n-5}{\ell-3}+\binom{n-6}{\ell-3}+\binom{n-7}{\ell-3}\right)-2\binom{n-4}{k-3} \left (\binom{n-3}{\ell-3}+\binom{n-4}{\ell-3}\right).
\end{align*}
Since $n\geq \max\{3k-3,3\ell-3\}\geq 15$,  by Lemma \ref*{trivialInEq} we have that
\begin{align*}
	\frac{\binom{n-2}{k-2}}{2\binom{n-4}{k-3}}=\frac{(n-2)(n-3)}{2(k-2)(n-k)}\geq \frac{(3k-5)(3k-6)}{2(k-2)(2k-3)}\geq \frac{13}{6}.
\end{align*}
Consider 
\begin{align*}
	& g(n,\ell)=13\left (\binom{n-5}{\ell-3}+\binom{n-6}{\ell-3}+\binom{n-7}{\ell-3}\right)-6\left (\binom{n-3}{\ell-3}+\binom{n-4}{\ell-3}\right). 
\end{align*}
Since $n\geq 15$, simple computation shows that 
\begin{align*}
& g(n,3)>0, g(n,4)=27n-192>0, g(n,5)=\frac{27n^2-411n+1472}{2}>0,\\
& g(n+1,\ell)=g(n,\ell)+g(n,\ell-1).
\end{align*}
For $\ell \geq 6$ and $n\geq 3\ell-3$, we have 
\begin{align*}
	 g(3\ell-3,\ell)& =13\left (\binom{3\ell-8}{\ell-3}+\binom{3\ell-9}{\ell-3}+\binom{3\ell-10}{\ell-3}\right)-6\left (\binom{3\ell-6}{\ell-3}+\binom{3\ell-7}{\ell-3}\right). \\
	 &= \binom{3\ell-6}{\ell-3}\left( \frac{26}{3} \cdot \frac{2\ell-3}{3\ell-7}\cdot  \frac{19\ell-49}{9\ell-24}-6\cdot  \frac{5\ell-9}{3\ell-6} \right)\\
	 &= \binom{3\ell-6}{\ell-3} \frac{534\ell^3-1494\ell^2-1344\ell+4284}{9(3\ell-6)(3\ell-7)(3\ell-8)}\\
	 &>0.
\end{align*}
It follows from inductive method that $g(n,\ell)>0$ for all $n\geq 3\ell-3$. Hence $ h_{n,k,\ell,k+1,2}>h_{n,k,\ell,4,2}$. This completes the proof. \qed

\begin{lemma}\label{4<s<k} 
	Let $n,k$ and $\ell$ be positive integers with $k\geq 5$, $\ell\geq 3$ and $n\geq \max\{3k-3,3\ell-3\}$. Then $h_{n,k,\ell,s,2}< 	h_{n,k,\ell,k+1,2}$ for $5\leq s\leq k$. 
\end{lemma}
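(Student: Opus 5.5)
The plan is to compare the two products directly, in the same spirit as Cases 5--9 of Lemma \ref{s=4}, and reduce everything to one inequality between ratios of binomial coefficients. By (1.1) with $t=2$,
\[
h_{n,k,\ell,s,2}=\left(\binom{n-2}{k-2}+2\binom{n-s}{k-s+1}\right)\left(\binom{n-2}{\ell-2}-\binom{n-s}{\ell-2}\right).
\]
Hence
\begin{align*}
h_{n,k,\ell,k+1,2}-h_{n,k,\ell,s,2}&=\binom{n-2}{k-2}\left(\binom{n-s}{\ell-2}-\binom{n-k-1}{\ell-2}\right)+2\left(\binom{n-2}{\ell-2}-\binom{n-k-1}{\ell-2}\right)\\
&\quad -2\binom{n-s}{k-s+1}\left(\binom{n-2}{\ell-2}-\binom{n-s}{\ell-2}\right).
\end{align*}
The middle term is positive, so I will discard it. I then expand both remaining brackets by Pascal's rule:
\[
\binom{n-s}{\ell-2}-\binom{n-k-1}{\ell-2}=\sum_{j=s+1}^{k+1}\binom{n-j}{\ell-3},\qquad \binom{n-2}{\ell-2}-\binom{n-s}{\ell-2}=\sum_{j=3}^{s}\binom{n-j}{\ell-3}.
\]
The first sum is nonempty because $s\le k$. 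It therefore suffices to prove
\[
\binom{n-2}{k-2}\sum_{j=s+1}^{k+1}\binom{n-j}{\ell-3}\ \ge\ 2\binom{n-s}{k-s+1}\sum_{j=3}^{s}\binom{n-j}{\ell-3}. \qquad (\ast)
\]

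The left factor $X(s)=\binom{n-2}{k-2}/\binom{n-s}{k-s+1}$ is large and increases quickly in $s$. Indeed,
\[
X(s+1)/X(s)=\frac{n-s}{k-s+1}\ \ge\ \frac{n-s}{k-4}\ \ge\ 2\quad\text{for } 5\le s\le k-1,
\]
since $n\ge 3k-3$. At the base case,
\[
X(5)=\frac{(n-2)(n-3)(n-4)}{(k-2)(k-3)(n-k)}
\]
is of order $k$, using $n\ge 3k-3$ and Lemma \ref{trivialInEq} to bound it from below by its value at $n=3k-3$.

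On the right side of $(\ast)$ I use the crude bound $\sum_{j=3}^{s}\binom{n-j}{\ell-3}\le (s-2)\binom{n-3}{\ell-3}$. On the left I keep at least the term $j=s+1$, and for $s\le k-1$ also further terms, giving at least $\binom{n-s-1}{\ell-3}$. The remaining task is then to bound
\[
R(s)=\binom{n-3}{\ell-3}\Big/\binom{n-s-1}{\ell-3}=\prod_{i=0}^{\ell-4}\frac{n-3-i}{n-s-1-i}
\]
from above. This product grows with $s$ by at most the factor $\frac{n-s-1}{n-s-\ell+2}$ per step. Near the critical values, where $n=3\max\{k,\ell\}-3$, this factor stays below the factor $\frac{n-s}{k-s+1}$ by which $X$ grows. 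I will make this precise with Lemma \ref{trivialInEq}, reducing to $n=\max\{3k-3,3\ell-3\}$, and then verify $(\ast)$ by induction on $s$ starting from $s=5$. The base case $s=5$ is essentially an analogue of Case 9 of Lemma \ref{s=4}, with one index shifted.

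The endpoint $s=k$ I will treat separately. There $\binom{n-s}{k-s+1}=n-k$ and the left sum is the single term $\binom{n-k-1}{\ell-3}$, so $(\ast)$ becomes
\[
\binom{n-2}{k-2}\binom{n-k-1}{\ell-3}\ge 2(n-k)\sum_{j=3}^{k}\binom{n-j}{\ell-3}.
\]
This follows because $\binom{n-2}{k-2}/(n-k)$ is exponentially large in $k\ge5$, whereas the ratio of the sum to $\binom{n-k-1}{\ell-3}$ is at most $(k-2)\bigl(\tfrac{n-3}{n-k-\ell+2}\bigr)^{\ell-3}$, which is bounded for $n\ge 3\max\{k,\ell\}-3$. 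Small values ($\ell=3$, where all binomials with lower index $0$ equal $1$, and $\ell=4$) are polynomial in $n$ and I will check them directly, as in Cases 4, 6 and 7 of Lemma \ref{s=4}.

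The main obstacle is controlling $R(s)$ uniformly in $\ell$. When $\ell$ is comparable to $k$ and $s$ is close to $k$, the product $\prod\frac{n-3-i}{n-s-1-i}$ can be large. The crude bounds above may then be too lossy, and I would instead keep the whole left sum $\sum_{j=s+1}^{k+1}\binom{n-j}{\ell-3}$ rather than a single term. If that still fails for some middle range of $(s,\ell)$, I would fall back on the recursion $g(n+1,\ell)=g(n,\ell)+g(n,\ell-1)$ used in Case 9 of Lemma \ref{s=4}. That means defining the analogous difference function in $(n,\ell)$ for fixed $k,s$, checking it at $n=3\ell-3$ and for small $\ell$, and propagating positivity in $n$.
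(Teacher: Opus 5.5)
Your reduction to $(\ast)$ is the paper's. Your induction step on $s$ also works, but it needs $X(s+1)/X(s)=\frac{n-s}{k-s+1}\ge 3$, not just $\ge 2$. Here $X(s)=\binom{n-2}{k-2}/\binom{n-s}{k-s+1}$ and $R(s)=\binom{n-3}{\ell-3}/\binom{n-s-1}{\ell-3}$, and the factor to be absorbed is $\frac{s-1}{s-2}\cdot\frac{R(s+1)}{R(s)}\le\frac43\cdot 2$.

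\textbf{The genuine gap is the base case $s=5$.} Your claim that $X(5)$ is of order $k$ is false. At $n=3k-3$ it equals $\frac{(3k-5)(3k-6)(3k-7)}{(k-2)(k-3)(2k-3)}$, which decreases to $\frac{27}{2}$. With $\ell=k$ and $n=3k-3$ one gets $\frac{X(5)}{6R(5)}=\frac{(3k-5)(2k-5)}{3(k-3)(3k-8)}<1$ for every $k\ge 7$. For example, at $(n,k,\ell)=(18,7,7)$ you have $X(5)=\frac{4368}{286}\approx 15.27$ but $6R(5)=\frac{6\cdot 1365}{495}\approx 16.55$. So the inequality $X(s)\ge 2(s-2)R(s)$ that you propagate is false at its starting point. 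Your diagnosis also points to the wrong place: the trouble is the smallest $s$, before $X$ has grown, not $s$ close to $k$. The fallbacks you list are not carried out.

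What is missing is a sharper treatment of $s=5$, which the paper supplies.
\begin{itemize}
\item Bound the right-hand sum term by term relative to $\binom{n-6}{\ell-3}$. This gives at most $\frac{27}{8}+\frac94+\frac32=\frac{57}{8}$, instead of $3\cdot\frac{27}{8}$.
\item For $k=5$, this bound suffices, since $X(5)/2\ge\frac{60}{7}>\frac{57}{8}$.
\item For $k\ge 6$, a single left-hand term still loses asymptotically, since $2\cdot\frac{57}{8}>\frac{27}{2}$. You must keep $\binom{n-6}{\ell-3}+\binom{n-7}{\ell-3}$ on the left. Since $\binom{n-7}{\ell-3}/\binom{n-6}{\ell-3}=\frac{n-\ell-3}{n-6}\ge\frac23$, the ratio drops to at most $\frac{171}{40}<\frac{27}{4}\le X(5)/2$.
\end{itemize}

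Your crude inequality does hold at $s=6$: the worst case $\ell=k$, $n=3k-3$ gives $\frac{X(6)}{8R(6)}=\frac{(3k-5)(2k-5)}{6(k-3)(k-4)}>1$. So you can start the induction at $s=6$ and handle $s=5$ separately as above. That would give a valid alternative to the paper's direct comparison of $\frac57\,3^{s-3}$ with $\frac{73}{8}\,2^{s-5}$ for $s\ge 6$.

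Your separate endpoint argument is unnecessary, because the induction already covers $s=k$. It is also wrong as stated. The quantity $(k-2)\bigl(\frac{n-3}{n-k-\ell+2}\bigr)^{\ell-3}$ is not bounded: for $\ell=k$, $n=3k-3$ each factor is close to $3$. That bound is also too weak to give the inequality when $k=\ell\in\{5,6\}$.
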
	

\proof By the definition of $h_{n,k,\ell,s,2}$ in (1.1), we have 
\begin{align*}
	&	h_{n,k,\ell,k+1,2}-	h_{n,k,\ell,s,2}\\
	&=\left (\binom{n-2}{k-2}+2\right)\left (\binom{n-2}{\ell-2}-\binom{n-k-1}{\ell-2}\right)\\
	& \quad -\left (\binom{n-2}{k-2}+2\binom{n-s}{k-s+1}\right)\left (\binom{n-2}{\ell-2}-\binom{n-s}{\ell-2}\right).\\
	& = 2\left (\binom{n-2}{\ell-2}-\binom{n-k-1}{\ell-2}\right)+\binom{n-2}{k-2}\left(\binom{n-s}{\ell-2}-\binom{n-k-1}{\ell-2}\right)\\
	& \quad -2\binom{n-s}{k-s+1}\left (\binom{n-2}{\ell-2}-\binom{n-s}{\ell-2}\right).
\end{align*}
It suffices to prove that 
\begin{align*}
& \binom{n-2}{k-2}\left(\binom{n-s}{\ell-2}-\binom{n-k-1}{\ell-2}\right)>2\binom{n-s}{k-s+1}\left (\binom{n-2}{\ell-2}-\binom{n-s}{\ell-2}\right).
\end{align*}
We distinguish two cases according to the value $s$.

Case 1: $s\geq 6$. On one hand, we have that
\begin{align*}
 \frac{\binom{n-2}{k-2}}{2\binom{n-s}{k-s+1}}&=\frac{(n-2)(n-3)\cdots(n-s+1)}{2(n-k)(k-2)(k-3)\cdots (k-s+2)}\\
 & \geq \frac{(3k-5)(3k-6)\cdots (3k-s-2)}{2(2k-3)(k-2)\cdots (k-s+2)}\\
 & \geq \frac{3k-5}{2(2k-3)}3^{s-3}\geq 3^{s-3}\cdot \frac{1}{2} \cdot \frac{3\cdot 5-5}{2\cdot 5-3}=\frac{5}{7}3^{s-3}.
\end{align*}
Simple computation shows that 
\begin{align*}
\frac{\binom{n-2}{\ell-2}-\binom{n-s}{\ell-2}}{\binom{n-s}{\ell-2}-\binom{n-k-1}{\ell-2}}&= \frac{\binom{n-3}{\ell-3}+\binom{n-4}{\ell-3}+\cdots+\binom{n-s}{\ell-3}}{\binom{n-s-1}{\ell-3}+\cdots +\binom{n-k-1}{\ell-3}}\leq \frac{\binom{n-3}{\ell-3}+\binom{n-4}{\ell-3}+\cdots+\binom{n-s}{\ell-3}}{\binom{n-s-1}{\ell-3}}\\
& =\frac{(n-3)(n-4)\cdots(n-s)}{(n-\ell)(n-\ell-1)\cdots(n-\ell-s+3)}+\frac{(n-4)\cdots(n-s)}{(n-\ell-1)\cdots(n-\ell-s+3)}\\
& \quad +\cdots +\frac{n-s}{n-\ell-s+3}.
\end{align*} 
If $\ell\geq k$, then by Lemma \ref{trivialInEq} we have that 
\begin{align*}
	\frac{\binom{n-2}{\ell-2}-\binom{n-s}{\ell-2}}{\binom{n-s}{\ell-2}-\binom{n-k-1}{\ell-2}}& \leq \frac{(3\ell-6)(3\ell-7)\cdots(3\ell-s-3)}{(2\ell-3)(2\ell-4)\cdots(2\ell-s)}+\frac{(3\ell-7)\cdots(3\ell-s-3)}{(2\ell-4)\cdots(2\ell-s)}\\
	& \quad +\cdots +\frac{3\ell-s-3}{2\ell-s}\\
&= \frac{3\ell-6}{2\ell-3} \cdot \frac{3\ell-7}{2\ell-4} \cdot \frac{3\ell-8}{2\ell-5} \cdot \left(\frac{3\ell-9}{2\ell-6}\cdots \frac{3\ell-3-s}{2\ell-s}\right) \\
&\quad  + \frac{3\ell-7}{2\ell-4} \cdot \frac{3\ell-8}{2\ell-5} \cdot \left(\frac{3\ell-9}{2\ell-6}\cdots \frac{3\ell-3-s}{2\ell-s}\right) \\
& \quad + \frac{3\ell-8}{2\ell-5} \cdot \left(\frac{3\ell-9}{2\ell-6}\cdots \frac{3\ell-3-s}{2\ell-s}\right) \\
& \quad +\left(\frac{3\ell-9}{2\ell-6}\cdots \frac{3\ell-3-s}{2\ell-s}\right) +\cdots + \frac{3\ell-s-3}{2\ell-s}.
\end{align*} 
Since $\frac{3\ell-6-a}{2\ell-3-a}\leq \frac{3}{2}$ for $0\leq a\leq 3$ and $\frac{3\ell-6-a}{2\ell-3-a}\leq 2$ for $a\leq \ell$, we further have 
\begin{align*}
	\frac{\binom{n-2}{\ell-2}-\binom{n-s}{\ell-2}}{\binom{n-s}{\ell-2}-\binom{n-k-1}{\ell-2}}& \leq \frac{27}{8}\cdot 2^{s-5}+\frac{9}{4}\cdot 2^{s-5}+\frac{3}{2}\cdot 2^{s-5}+2^{s-5}+2^{s-6}+\cdots+2\\
&< (\frac{27}{8}+\frac{9}{4}+\frac{3}{2}+2)\cdot 2^{s-5}=\frac{73}{8}\cdot 2^{s-5}\\
&<\frac{5}{7}\cdot 3^{s-3}.
\end{align*} 
If $k\geq \ell$, then by Lemma \ref{trivialInEq} we have that
\begin{align*}
	\frac{\binom{n-2}{\ell-2}-\binom{n-s}{\ell-2}}{\binom{n-s}{\ell-2}-\binom{n-k-1}{\ell-2}}& \leq \frac{(3k-6)(3k-7)\cdots(3k-s-3)}{(3k-\ell-3)(3k-\ell-4)\cdots(3k-\ell-s)}+\frac{(3k-7)\cdots(3k-s-3)}{(3k-\ell-4)\cdots(3k-\ell-s)}\\
	& \quad +\cdots +\frac{3k-\ell-s-3}{3k-\ell-s}\\
& \leq \frac{(3k-6)(3k-7)\cdots(3k-s-3)}{(2k-3)(2k-4)\cdots(2k-s)}+\frac{(3k-7)\cdots(3k-s-3)}{(2k-4)\cdots(2k-s)}\\
& \quad +\cdots +\frac{3k-\ell-s-3}{2k-s}.
\end{align*} 
Similar discussion yields $	\frac{\binom{n-2}{\ell-2}-\binom{n-s}{\ell-2}}{\binom{n-s}{\ell-2}-\binom{n-k-1}{\ell-2}}<\frac{5}{7}\cdot 3^{s-3}$.  So, the conclusion holds.

Case 2: $s=5$. Since $n\geq 3k-3$, we have that
\begin{align*}
	\frac{\binom{n-2}{k-2}}{2\binom{n-s}{k-s+1}}&=\frac{(n-2)(n-3)(n-4)}{2(n-k)(k-2)(k-3)} \geq \frac{(3k-5)(3k-6)(3k-7)}{2(2k-3)(k-2)(k-3)}\geq \left \{\begin{array}{ll}
		\frac{60}{7}, & \text{if}\ k=5,\smallskip \\
		\frac{27}{4}, & \text{if}\ k\geq 6.\\
	\end{array} \right.
\end{align*}

For $k=5$, since $n\geq 3\ell-3$, by Lemma \ref{trivialInEq} we have that
\begin{align*}
\frac{\binom{n-2}{\ell-2}-\binom{n-5}{\ell-2}}{\binom{n-5}{\ell-2}-\binom{n-6}{\ell-2}}&=\frac{\binom{n-3}{\ell-3}+\binom{n-4}{\ell-3}+\binom{n-5}{\ell-3}}{\binom{n-6}{\ell-3}}\\ & =\frac{(n-3)(n-4)(n-5)}{(n-\ell)(n-\ell-1)(n-\ell-2)}+\frac{(n-4)(n-5)}{(n-\ell-1)(n-\ell-2)}+\frac{n-5}{n-\ell-2}\\ 
& \leq \frac{(3\ell-6)(3\ell-7)(3\ell-8)}{(2\ell-3)(2\ell-4)(2\ell-5)}+\frac{(3\ell-6)(3\ell-7)}{(2\ell-4)(2\ell-5)}+\frac{3\ell-8}{2\ell-5}\\ 
& \leq \frac{27}{8}+\frac{9}{4}+\frac{3}{2}=\frac{57}{8}<\frac{60}{7}.
\end{align*}
The conclusion follows. 

For $k\geq 6$, since $n\geq 3\ell-3$, by Lemma \ref{trivialInEq} we have that
\begin{align*}
	\frac{\binom{n-2}{\ell-2}-\binom{n-5}{\ell-2}}{\binom{n-5}{\ell-2}-\binom{n-k-1}{\ell-2}}&=\frac{\binom{n-3}{\ell-3}+\binom{n-4}{\ell-3}+\binom{n-5}{\ell-3}}{\binom{n-6}{\ell-3}+\cdots +\binom{n-k-1}{\ell-3}}\leq \frac{\binom{n-3}{\ell-3}+\binom{n-4}{\ell-3}+\binom{n-5}{\ell-3}}{\binom{n-6}{\ell-3}+\binom{n-7}{\ell-3}}\\ &
	 =\frac{\frac{(n-3)(n-4)(n-5)}{(n-\ell)(n-\ell-1)(n-\ell-2)}+\frac{(n-4)(n-5)}{(n-\ell-1)(n-\ell-2)}+\frac{n-5}{n-\ell-2}}{1+\frac{n-\ell-3}{n-6}}\\ 
	& \leq \frac{\frac{(3\ell-6)(3\ell-7)(3\ell-8)}{(2\ell-3)(2\ell-4)(2\ell-5)}+\frac{(3\ell-6)(3\ell-7)}{(2\ell-4)(2\ell-5)}+\frac{3\ell-8}{2\ell-5}}{1+\frac{2\ell-6}{3\ell-9}} \\
	& \leq \frac{\frac{27}{8}+\frac{9}{4}+\frac{3}{2}}{1+\frac{2}{3}}=\frac{171}{40}<\frac{27}{4}.
\end{align*}
The conclusion follows. This completes the proof. \qed

\section{Proof of Theorem \ref{MainResult}}

Before we prove the main theorem, we state a result which is implied in the proof of \cite[Theorem 1.9]{BJX2026}.

\begin{lemma}[\cite{BJX2026}]\label{NMP1} 
Let \(\mathcal{A}\subseteq \binom{[n]}{k}\) and \(\mathcal{B}\subseteq \binom{[n]}{\ell}\) be two maximal left-compressed cross-$2$-intersecting families with $k\geq \ell\geq 3$ and $n\geq 3k-3$, $g(\mathcal{A})\in G_{*}(\mathcal{A})$, $g(\mathcal{B})\in G_{*}(\mathcal{B})$ such that $s:=\max\{s^{+}(g(\mathcal{A})),s^{+}(g(\mathcal{B}))\}$ is minimal. Let $i$ be the smallest integer such that \( g_{i}^{*}(\mathcal{A}) \neq \emptyset \). Suppose that $s\geq 5$ and $s>i\geq 3$. Define
\begin{align*}
& \mathcal{A}_{1}=\mathcal{A}\cup \mathcal{D}(g_{i}^{*}(\mathcal{A})')\subseteq \binom{[n]}{k}\ \text{and}\ \mathcal{B}_{1}=\mathcal{B}\setminus \mathcal{D}(g_{s + 2 - i}^{*}(\mathcal{B}))\subseteq \binom{[n]}{\ell}\\
& \mathcal{A}_2 = \mathcal{A} \setminus \mathcal{D}(g_{i}^{*}(\mathcal{A}))\subseteq \binom{[n]}{k}\ \text{and}\ \mathcal{B}_2 = \mathcal{B} \cup \mathcal{D}(g_{s+2-i}^{*}(\mathcal{B})')\subseteq \binom{[n]}{\ell}.
\end{align*}
Then the following statements hold.
\begin{enumerate}[(i)]
	\item If \((s,i)\not\in \{ (6, 4), (8, 5)\}\), then two inequalities $|\mathcal{A}_1||\mathcal{B}_1|\leq |\mathcal{A}||\mathcal{B}|$ and  $|\mathcal{A}_2||\mathcal{B}_2|\leq |\mathcal{A}||\mathcal{B}|$ cannot hold simultaneously.
	\item If \((s,i)\in \{ (6, 4), (8, 5)\}\), then
	$g_j(\mathcal{A})= g_j(\mathcal{B})=\emptyset \ {\rm for}\ j\leq i-1.$
\end{enumerate}	
\end{lemma}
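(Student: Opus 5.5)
The plan is to turn the two inequalities into a single numerical condition and show that this condition fails except at the two listed pairs. Put $x=|\mathcal{A}|$, $y=|\mathcal{B}|$, $a=|g_i^*(\mathcal{A})|$ and $b=|g_{s+2-i}^*(\mathcal{B})|$. By Lemma \ref{lemkl}(ii) we have $b\ge 1$.

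\textbf{Step 1: sizes of the four families.} By Lemma \ref{lemkl}(iii),
\[|\mathcal{A}_1|=x+\alpha,\quad \alpha=a\tbinom{n-s}{k-i+1},\qquad |\mathcal{B}_1|=y-\beta,\quad \beta=b\tbinom{n-s}{\ell+i-s-2}.\]
Lemma \ref{lemkl}(iii) is not stated with $\mathcal{A}$ and $\mathcal{B}$ interchanged, but its proof is symmetric in the two families. Applying it with the roles swapped, and with $g_{s+2-i}^*(\mathcal{B})\neq\emptyset$ in place of $g_i^*(\mathcal{A})$, shows that $\mathcal{A}_2,\mathcal{B}_2$ are cross-$2$-intersecting. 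Their sizes are
\[|\mathcal{A}_2|=x-\gamma,\quad \gamma=a\tbinom{n-s}{k-i},\qquad |\mathcal{B}_2|=y+\delta,\quad \delta=b\tbinom{n-s}{\ell+i-s-1}.\]

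\textbf{Step 2: reduce to one inequality.} Suppose both $|\mathcal{A}_1||\mathcal{B}_1|\le xy$ and $|\mathcal{A}_2||\mathcal{B}_2|\le xy$ hold. These say $\alpha y\le \beta x+\alpha\beta$ and $\delta x\le \gamma y+\gamma\delta$. Substituting the first bound on $y$ into the second gives
\[(\alpha\delta-\beta\gamma)\,x\le \alpha\gamma(\beta+\delta).\]
Symmetrically, $(\alpha\delta-\beta\gamma)\,y\le \beta\delta(\alpha+\gamma)$. The key quantity is
\[\frac{\alpha\delta}{\beta\gamma}=\frac{n-s-k+i}{k-i+1}\cdot\frac{n-\ell-i+2}{\ell+i-s-1}.\]
Under $n\ge 3k-3$, $k\ge\ell$ and $3\le i<s$, with $s\ge5$, both factors are at least of order $2$, so this ratio should be bounded well away from $1$.

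\textbf{Step 3: lower bounds on $x,y$ and the contradiction.} Next I would bound $x$ and $y$ from below. Both families contain $\mathcal{D}(E)$ for the generating sets. Left-compression together with Lemma \ref{lemkl}(i) forces many shifts of $E\in g_i^*(\mathcal{A})$ to lie in $\mathcal{A}$. Hence $x$ is at least the number of $k$-sets meeting $[s]$ in a shift-closed family containing an $i$-set of $[s]$. A crude bound of the form $x\ge \binom{n-s}{k-i}\cdot c(s,i)$ should suffice, and similarly for $y$. Plugging these in, the displayed inequality reduces to a ratio bound in $n,k,\ell,s,i$ only. I would verify it is false by monotonicity in $n$ (Lemma \ref{trivialInEq}), starting from $n=3k-3$, and then checking the finitely many small $(s,i)$ by hand. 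These computations give (i).

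\textbf{Step 4: the exceptional pairs.} The cases $(s,i)=(6,4)$ and $(8,5)$ should be exactly those where $i=\lceil (s+2)/2\rceil$ is near the middle and the estimate is tight. For these I would argue structurally. Suppose some $F\in g_j(\mathcal{A})\cup g_j(\mathcal{B})$ has $j\le i-1$. Using cross-$2$-intersection of the generating sets and Lemma \ref{lemkl}(ii), the partner sets of size $s+2-i$ in $g^*(\mathcal{B})$ must meet $F$ in at least $2$ elements. Combined with left-compression and the minimality of $i$, this should force either a set in $g^*(\mathcal{A})$ of size less than $i$ or a generating family with smaller $s^+$. Either outcome contradicts the choice of $i$ or of $s$, which gives (ii).

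\textbf{Main obstacle.} I expect the hardest part to be the numerical verification in Step 3 near $n=3k-3$ with small $k$. The crude lower bounds on $x,y$ may be too weak there, in which case I would need finer counts of $\mathcal{D}(g(\mathcal{A}))$ for specific small $(s,i)$.
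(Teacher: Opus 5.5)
Two steps of your proposal have genuine gaps: Step 3 never supplies the estimate that proves (i), and Step 4 aims at a form of (ii) that is false. The paper does not prove this lemma; it quotes it from the earlier work of Bao, Ji and Xiang, so your proposal has to stand on its own.

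\textbf{Step 3.} Steps 1--2 are correct, including the role-swapped use of Lemma~\ref{lemkl}(iii). They also simplify. Put $\gamma=|g_i^*(\mathcal{A})|\binom{n-s}{k-i}$ and $\beta=|g^*_{s+2-i}(\mathcal{B})|\binom{n-s}{\ell+i-s-2}$. After dividing by $\alpha\delta-\beta\gamma>0$, your two derived inequalities read
\[
|\mathcal{A}|\le \gamma\,\frac{n-s-k+i}{n-k-\ell+1},\qquad |\mathcal{B}|\le \beta\,\frac{n-\ell-i+2}{n-k-\ell+1}.
\]
So (i) amounts to proving $|\mathcal{A}\setminus\mathcal{D}(g_i^*(\mathcal{A}))|>\frac{\ell+i-s-1}{n-k-\ell+1}\,\gamma$, or the analogue for $\mathcal{B}$. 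Step 3 never produces such an estimate, and that estimate is the whole content of (i).

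The bound you suggest is not of the right shape. $\gamma$ is proportional to $|g_i^*(\mathcal{A})|$, which can be as large as $\binom{s-1}{i-1}$. A bound $c(s,i)\binom{n-s}{k-i}$ taken from one generating $i$-set plus left-compression does not grow with it. Even the natural double count over the shifts $S_{js}(A)$, with $A\in\mathcal{D}(g_i^*(\mathcal{A}))$ and $j\in[s-1]\setminus A$, only gives the factor $\frac{s-i}{i}$. That is too weak when $s=i+1$, $k=\ell$ and $n=3k-3$, where the required factor is $1$.

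Worse, at the excluded pairs the claim is false, not merely hard. Take $\mathcal{A}=\mathcal{B}=\mathcal{F}(n,k,2,2)$ with $k=\ell\ge4$ and $n=3k-3$. Then $(s,i)=(6,4)$, the two inequalities coincide, and they reduce to
\[
15+\frac{3(k-4)}{k-2}+\frac{(k-4)(k-5)}{(2k-4)(2k-5)}\le\frac{10(2k-5)}{k-2},
\]
which holds. So your estimate must fail exactly there and be sharp enough to succeed at every other pair. That sharp case analysis is what is missing, not just a numerical check.

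\textbf{Step 4.} You try to derive (ii) from cross-$2$-intersection, left-compression and the minimality of $i$ and $s$ alone. In that form (ii) is false. Take $\ell\ge4$, $k\ge\max\{\ell,6\}$, $n\ge3k-3$, and
\[
\mathcal{A}=\{A\in\tbinom{[n]}{k}:[3]\subseteq A\ \text{or}\ |A\cap[6]|\ge4\},\qquad \mathcal{B}=\{B\in\tbinom{[n]}{\ell}:|B\cap[3]|\ge2\ \text{and}\ |B\cap[6]|\ge4\}.
\]
\begin{itemize}
\item Each family is exactly the set of sets that $2$-intersect every member of the other, so the pair is maximal.
\item Both families are left-compressed.
\item The generating sets are $g(\mathcal{A})=\{[3]\}\cup\{E\in\binom{[6]}{4}:[3]\not\subseteq E\}$ and $g(\mathcal{B})=\{F\in\binom{[6]}{4}:|F\cap[3]|\ge2\}$.
\item $s=6$ is minimal, because the members $A$ with $A\cap[6]=\{3,4,5,6\}$ can only be generated by a set containing $6$.
\item $i=4$, yet $[3]\in g_3(\mathcal{A})$.
\end{itemize}

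Case~4 of the proof of Theorem~\ref{MainResult} applies (ii) only when both inequalities of (i) hold, and that hypothesis is indispensable. Your argument for (ii) therefore has to be quantitative: show that a member of $g(\mathcal{A})\cup g(\mathcal{B})$ of size at most $i-1$ pushes $|\mathcal{A}|$ or $|\mathcal{B}|$ above the thresholds displayed above. In the example, $\gamma=9\binom{n-6}{k-4}$ and $|\mathcal{A}|\ge\binom{n-3}{k-3}+12\binom{n-6}{k-4}\ge2\gamma$. This exceeds $\gamma\,\frac{n-k-2}{n-k-\ell+1}$, so the two inequalities fail there, which is consistent with this reading of (ii).
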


\noindent \textbf{Proof of Theorem \ref{MainResult}}. By Fact \ref{lcl} and Lemma \ref{Sij}, we may assume that \(\mathcal{A}\subseteq\binom{[n]}{k}\) and \(\mathcal{B}\subseteq\binom{[n]}{\ell}\) are two left-compressed nontrivial cross-\(2\)-intersecting families, and we may also assume that the product \(|\mathcal{A}||\mathcal{B}|\) is maximum. 

If $\ell=2$, denote $T=\bigcup_{B\in \mathcal{B}}B$. Clearly, $|T|\geq 3$ and $T\subset A$ for any $A\in \mathcal{A}$. Hence $k\geq 3$, $|\mathcal{B}|\leq \binom{|T|}{2}$, $|\mathcal{A}|\leq \binom{n-|T|}{k-|T|}$ and  \(|\mathcal{A}||\mathcal{B}|\leq \binom{|T|}{2}\binom{n-|T|}{k-|T|}\). If $k=3$, then \(|\mathcal{A}||\mathcal{B}|\leq \binom{3}{2}\binom{n-3}{k-3}=3\). If $k>3$, since 
\begin{align*}
& \frac{\binom{x}{2}\binom{n-x}{k-x}}{\binom{x+1}{2}\binom{n-x-1}{k-x-1}}=\frac{(x-1)(n-x)}{(x+1)(k-x)}>1
\end{align*}
for $3\leq x\leq k-1$, we have \(|\mathcal{A}||\mathcal{B}|\leq \binom{3}{2}\binom{n-3}{k-3}\). The conclusion holds. 

Now we assume that $\ell\geq 3$. Let \( g(\mathcal{A}) \in G_{*}(\mathcal{A}) \) and \( g(\mathcal{B}) \in G_{*}(\mathcal{B}) \) such that \( s := \max\{s^{+}(g(\mathcal{A})), s^{+}(g(\mathcal{B}))\}\) is minimal. Since $\mathcal{H}_{n,\ell,3,2}$ and $\mathcal{I}_{n,k,3,2}$, $\mathcal{H}_{n,\ell,\ell+1,2}$ and $\mathcal{I}_{n,k,\ell+1,2}$, $\mathcal{F}_{n,k,2,1}$ and $\mathcal{F}_{n,\ell,2,1}$ are all maximal nontrivial cross-$2$-intersecting families, we have
\[
|\mathcal{A}||\mathcal{B}|  \geq \max\{h_{n,\ell,k,3,2},h_{n,\ell,k,\ell+1,2},f_{n,k,\ell,2,1}\}.
\]
If \( s = 2 \), the cross-\(2\)-intersection property of \( g(\mathcal{A}) \) and \( g(\mathcal{B}) \) implies \( g(\mathcal{A}) = g(\mathcal{B}) = \{[2]\} \). Hence, $\mathcal{A}$ and $\mathcal{B}$ are trivial cross-2-intersecting, a contradiction.
Therefore, for the remainder of the proof, we assume \( s \geq 3 \). Let \( i(2\leq i\leq k) \) be the smallest integer such that \( g_{i}^{*}(\mathcal{A}) \neq \emptyset \). Then, by (ii) of Lemma \ref{lemkl}, we have \( g_{s+2-i}^{*}(\mathcal{B}) \neq \emptyset \). 

We divide the proof into four cases based on the relationship between \(i\) and \(s\). 

{\bf Case 1}: \( i = 2 \). Then \( [s] \in g_{s+2-i}^{*}(\mathcal{B}) = g_{s}^{*}(\mathcal{B}) \). Since \( g(\mathcal{B}) \) is minimal with respect to set inclusion, we have \( g(\mathcal{B}) = \{[s]\} \). By the maximality of \( |\mathcal{A}||\mathcal{B}| \), we have  \( g(\mathcal{A}) = \binom{[s]}{2} \). Thus, 
\[
|\mathcal{A}| = \sum_{2 \leq w \leq s} \binom{s}{w} \binom{n - s}{k - w} \text{ and } |\mathcal{B}| = \binom{n - s}{\ell - s}.
\]

{\bf Subcase 1.1}: $s=3$. Then $\mathcal{A}$ and $\mathcal{B}$ are nontrivial cross-2-intersecting and \(|\mathcal{A}||\mathcal{B}|=h_{n,k,\ell,3,2}\). By Lemma \ref{hn,l,k,3,2} and the maximality of \( |\mathcal{A}||\mathcal{B}| \), we have $k=\ell$, that is, 
\begin{align*}
\mathcal{A}=\{A\in \binom{[n]}{k}\colon |A\cap [3]|\geq 2\}\ \text{and}\ \mathcal{B}=\{B\in \binom{[n]}{k}\colon [3]\subset B\}.
\end{align*} 

{\bf Subcase 1.2} $s>3$. Define the families
\[\mathcal{A}_1 = \left\{A \in \binom{[n]}{k} : |A \cap [s-1]| \geq 2\right\},\ \ \mathcal{B}_1 = \left\{B \in \binom{[n]}{\ell} : [s-1] \subset B\right\}.\]
 Clearly \(\mathcal{A}_1\) and \(\mathcal{B}_1\) are nontrivial cross-\(2\)-intersecting with
\[
|\mathcal{A}_1| = \sum_{2 \leq w \leq s-1} \binom{s-1}{w} \binom{n-s+1}{k-w} \text{ and } |\mathcal{B}_1| = \binom{n-s+1}{\ell-s+1}.
\]
Since \(\mathcal{A}_1\subset \mathcal{A}\), we have
 \begin{align*}
&|\mathcal{A}| - |\mathcal{A}_1| =(s-1)\binom{n-s}{k-2}\ \text{and}\ |\mathcal{B}_1| - |\mathcal{B}| = \binom{n-s}{\ell-s+1}.
\end{align*}
From the assumptions \(n \geq 3(k-1)\) and \(k\geq \ell\), it follows that 
\begin{align*}
s(n - \ell) - 2(n - s + 1) &= (s - 2)n - s(\ell - 2) - 2 \\
&\geq 3(s - 2)(k-1) - s(\ell - 2) - 2 \\
& = 2(s - 3)(k - 1)+s(k-\ell)+s-2 \\
& > 0.
\end{align*}
Therefore,
\begin{align*}
|\mathcal{A}_1||\mathcal{B}_1| - |\mathcal{A}||\mathcal{B}| &=\left(|\mathcal{A}|-(s-1) \binom{n-s}{k-2} \right)|\mathcal{B}_1|-|\mathcal{A}|\left(|\mathcal{B}_1|-\binom{n-s}{\ell-s+1}\right)\\
&=\binom{n-s}{\ell-s+1}\left(\sum_{2 \leq w \leq s}\binom{s}{w} \binom{n - s}{k - w}\right)-(s-1)\binom{n - s}{k - 2}\binom{n - s+1}{\ell - s+1}\\
&>\binom{n - s}{k - 2}\left(\binom{s}{2} \binom{n-s}{\ell-s+1} -(s-1)\binom{n-s+1}{\ell-s+1} \right)\\
&=(s-1)\binom{n - s}{k - 2}\binom{n-s}{\ell-s+1}  \left(\frac{s(n-l)-2(n-s+1)}{2(n-\ell)}\right)\\
&> 0. 
\end{align*}
This yields the inequality \(|\mathcal{A}_1||\mathcal{B}_1|> |\mathcal{A}||\mathcal{B}|\), which contradicts the maximality of \(|\mathcal{A}||\mathcal{B}|\).

{\bf Case 2}: \(i=s\). Since \( g(\mathcal{A}) \) is minimal, we have \( g(\mathcal{A}) = \{[s]\} \). By the maximality of \( |\mathcal{A}||\mathcal{B}| \), we have  \( g(\mathcal{B}) = \binom{[s]}{2} \). Consequently,  
\[
|\mathcal{A}| = \binom{n - s}{k - s} \text{ and } |\mathcal{B}| = \sum_{2 \leq w \leq s} \binom{s}{w} \binom{n - s}{\ell - w}.
\]

{\bf Subcase 2.1}: $s=3$. Then $\mathcal{A}$ and $\mathcal{B}$ are nontrivial cross-2-intersecting and \(|\mathcal{A}||\mathcal{B}|= h_{n,\ell,k,3,2}\), that is, 
\begin{align*}
	\mathcal{A}=\{A\in \binom{[n]}{k}\colon [3]\subset A\}\ \text{and}\ \mathcal{B}=\{B\in \binom{[n]}{\ell}\colon |B\cap [3]|\geq 2\}.
\end{align*}  

{\bf Subcase 2.2}: $s>3$.  Define the families
\[\mathcal{A}_1 =\left\{A \in \binom{[n]}{k} : [s-1] \subset A\right\},\ \mathcal{B}_1 = \left\{B \in \binom{[n]}{\ell} : |A \cap [s-1]| \geq 2 \right\}.\]
Clearly, \(\mathcal{A}_1\) and \(\mathcal{B}_1\) are nontrivial cross-\(2\)-intersecting with
\[
|\mathcal{A}_1| = \binom{n-s+1}{k-s+1} \text{ and } |\mathcal{B}_1| = \sum_{2 \leq w \leq s-1} \binom{s-1}{w} \binom{n-s+1}{\ell-w} .
\]
Following an argument analogous to that in Case 1, we obtain 
\begin{align*}
|\mathcal{A}_1||\mathcal{B}_1| - |\mathcal{A}||\mathcal{B}| &=\left(|\mathcal{B}|-(s-1) \binom{n-s}{\ell-2} \right)|\mathcal{A}_1|-|\mathcal{B}|\left(|\mathcal{A}_1|-\binom{n-s}{k-s+1}\right)\\
&>\binom{n - s}{\ell - 2}\left(\binom{s}{2} \binom{n-s}{k-s+1} -(s-1)\binom{n-s+1}{k-s+1} \right)\\
&=(s-1)\binom{n - s}{\ell - 2}\binom{n-s}{k-s+1}  \left(\frac{s(n-k)-2(n-s+1)}{2(n-k)}\right)\\
&\geq 0, 
\end{align*}
which contradicts the maximality of \(|\mathcal{A}||\mathcal{B}|\).

Therefore, for the remainder of the proof, we assume \( s-1\geq i \geq 3 \).

{\bf Case 3}: \( s = 4 \). In this case, we need only consider \( i =3 \), which implies \( g_{3}^{*}(\mathcal{A}) \neq \emptyset \) and \( g_{3}^{*}(\mathcal{B}) \neq \emptyset \). If \([2] \in g(\mathcal{A}) \cap g(\mathcal{B})\), then \( g(\mathcal{A}) = g(\mathcal{B}) = \{[2]\}\), which implies \( s = s^{+}(g(\mathcal{A})) = s^{+}(g(\mathcal{B})) = 2 \), yielding a contradiction. Therefore, \([2] \not\in g(\mathcal{A})\) or \([2] \not\in g(\mathcal{B})\).  We proceed by considering three subcases according to the membership of \([2]\) in \(g(\mathcal{A})\) and \(g(\mathcal{B})\). 

{\bf Subcase 3.1}:\ \([2] \not\in g(\mathcal{A})\) and \([2]\in g(\mathcal{B})\).  By the maximality of $|\mathcal{A}||\mathcal{B}|$, we have
\[
g(\mathcal{B}) = \{\{1,2\}, \{1,3,4\}, \{2,3,4\}\} \text{ and } g(\mathcal{A}) = \{\{1,2,3\}, \{1,2,4\} \}.
\]
Consequently, 
\[
\mathcal{B} = \left\{B \in \binom{[n]}{\ell} : [2] \subset B \text{ or } |B \cap [4]| \geq 3\right\} \text{ and}
\]
\[
\mathcal{A} = \left\{A \in \binom{[n]}{k} : [2] \subset A \text{ and } |A \cap [4]| \geq 3\right\}.
\]
It follows that 
\[|\mathcal{A}||\mathcal{B}| = h_{n,\ell,k,4,2}=\left( \binom{n-2}{k-2} - \binom{n-4}{k-2} \right)\left( \binom{n-2}{\ell-2} + 2 \binom{n-4}{\ell-3} \right) .\]
By Lemma \ref{s=4} and the maximality of $|\mathcal{A}||\mathcal{B}|$, we have $\ell=3$.

{\bf Subcase 3.2}:\ \([2] \in g(\mathcal{A})\) and \([2] \not\in g(\mathcal{B})\). By the maximality of $|\mathcal{A}||\mathcal{B}|$, we have
\[
g(\mathcal{A}) = \{\{1,2\}, \{1,3,4\}, \{2,3,4\}\} \text{ and } g(\mathcal{B}) = \{\{1,2,3\}, \{1,2,4\}\}.
\]
It follows that 
\begin{align*}
|\mathcal{A}||\mathcal{B}| = h_{n,k,\ell,4,2}= \left( \binom{n-2}{k-2}+2\binom{n-4}{k-3} \right)\left( \binom{n-2}{\ell-2}- \binom{n-4}{\ell-2} \right).
\end{align*}
By Lemmas \ref{hn,l,k,3,2} and \ref{s=4}, and the maximality of $|\mathcal{A}||\mathcal{B}|$, we have $k=\ell=3$.

{\bf Subcase 3.3}:\ \([2] \not\in g(\mathcal{A})\) and \([2] \not\in g(\mathcal{B})\).  By Lemma \ref{lemkl}(i), we have \(|F| \geq 3\) for all \(F \in g(\mathcal{A}) \cup g(\mathcal{B})\). It follows from the maximality of $|\mathcal{A}||\mathcal{B}|$ that 
\[
g(\mathcal{A}) = \binom{[4]}{3}\ \text{and}\ g(\mathcal{B}) = \binom{[4]}{3}.
\]
Consequently, 
\[
\mathcal{A} = \mathcal{F}(n, k, 2, 1) = \left\{A \in \binom{[n]}{k}\colon |A \cap [4]| \geq 3\right\} \text{ and} 
\] 
\[
\mathcal{B} = \mathcal{F}(n, \ell, 2, 1) = \left\{B \in \binom{[n]}{\ell}\colon |B \cap [4]| \geq 3\right\}.
\]  
It follows that 
\begin{align*}
	|\mathcal{A}||\mathcal{B}| =f_{n,k,\ell,2,1}= \left( 4\binom{n-4}{k-3}+\binom{n-4}{k-4} \right)\left( 4\binom{n-2}{\ell-3}+ \binom{n-4}{\ell-4} \right).
\end{align*}

{\bf Case 4}: \( s \geq 5\ (s-1\geq i\geq 3) \). Then, by (ii) of Lemma \ref{lemkl}, we have \( g_{s+2-i}^{*}(\mathcal{B}) \neq \emptyset \). Define \[\mathcal{A}_1 = \mathcal{A} \cup \mathcal{D}(g_{i}^{*}(\mathcal{A})'),\ \ \mathcal{B}_1 = \mathcal{B} \setminus \mathcal{D}(g_{s+2-i}^{*}(\mathcal{B})).\] 
By (iii) of Lemma \ref{lemkl}, the families \(\mathcal{A}_1\) and \(\mathcal{B}_1\) are cross-\(2\)-intersecting. 

If \(\mathcal{A}_1\) and \(\mathcal{B}_1\) are trivial, by Lemma \ref{transform} we have $\mathcal{A}=\mathcal{I}_{n,k,s,2}$ and $\mathcal{B}=\mathcal{H}_{n,\ell,s,2}$. It follows that $$|\mathcal{A}||\mathcal{B}|=h_{n,\ell,k,s,2}.$$
By Lemma \ref{4<s<k} and the maximality of \(|\mathcal{A}||\mathcal{B}|\), we have $s=\ell+1$.

If \(\mathcal{A}_1\) and \(\mathcal{B}_1\) are nontrivial, then
\[
|\mathcal{A}_1||\mathcal{B}_1| = \left( |\mathcal{A}| + |g_{i}^{*}(\mathcal{A})| \binom{n-s}{k-i+1} \right) \left( |\mathcal{B}| - |g_{s+2-i}^{*}(\mathcal{B})| \binom{n-s}{\ell-s-2+i} \right) \leq |\mathcal{A}||\mathcal{B}|,
\]
where the inequality follows from the maximality of \(|\mathcal{A}||\mathcal{B}|\). 

Now, define \[\mathcal{A}_2 = \mathcal{A} \setminus \mathcal{D}(g_{i}^{*}(\mathcal{A})),\ \ \mathcal{B}_2 = \mathcal{B} \cup \mathcal{D}(g_{s+2-i}^{*}(\mathcal{B})').\] 
By (iii) of Lemma \ref{lemkl}, the families \(\mathcal{A}_2\) and \(\mathcal{B}_2\) are cross-\(2\)-intersecting. 

If \(\mathcal{A}_2\) and \(\mathcal{B}_2\) are trivial, by Lemma \ref{transform} we have $\mathcal{A}=\mathcal{H}_{n,k,s,2}$ and $\mathcal{B}=\mathcal{I}_{n,\ell,s,2}$. It follows that $$|\mathcal{A}||\mathcal{B}|=h_{n,k,\ell,s,2}.$$ 
By Lemmas \ref{4<s<k} and \ref{s=4}, and the maximality of \(|\mathcal{A}||\mathcal{B}|\), we have $k=\ell$ and $s=\ell+1$.

If \(\mathcal{A}_2\) and \(\mathcal{B}_2\) are nontrivial, then
\[
|\mathcal{A}_2||\mathcal{B}_2| = \left( |\mathcal{A}| - |g_{i}^{*}(\mathcal{A})| \binom{n-s}{k-i} \right) \left( |\mathcal{B}| + |g_{s+2-i}^{*}(\mathcal{B})| \binom{n-s}{\ell-s+i-1} \right) \leq |\mathcal{A}||\mathcal{B}|. 
\]

 If \((s,i)\not\in \{ (6, 4), (8, 5)\}\), then by (i) of Lemma \ref{NMP1} we arrive at a contradiction. 
 Otherwise,  by (ii) of Lemma \ref{NMP1} we have 
\[g_j(\mathcal{A})=g_j(\mathcal{B})= \emptyset \ {\rm for}\ j\leq i-1.\]
Then, by the maximality of \(|\mathcal{A}||\mathcal{B}|\), it must hold that 
\[ g(\mathcal{A})=\binom{[s]}{i}\ {\rm and}\ g(\mathcal{B})=\binom{[s]}{i}.\]
This implies that
\[\mathcal{A}=\mathcal{F}(n,k,2,i-2)\ {\rm and}\ \mathcal{B}=\mathcal{F}(n,\ell,2,i-2).\]
Consequently, \(\mathcal{A}\) and \(\mathcal{B}\) are nontrivial \(2\)-intersecting. Since 
\begin{align*}
\frac{|\mathcal{F}(n,k,2,1)\setminus \mathcal{F}(n,k,2,2)|}{|\mathcal{F}(n,k,2,2)\setminus \mathcal{F}(n,k,2,1)|}&=\frac{\binom{4}{3}\binom{n-6}{k-3}}{\binom{4}{2}\binom{n-6}{k-4}}=\frac{2(n-k-2)}{3(k-3)}\geq \frac{2(3k-3-k-2)}{3(k-3)}>1,
\end{align*} we have $|\mathcal{F}(n,k,2,1)|>|\mathcal{F}(n,k,2,2)|$. Similar discussion shows that $|\mathcal{F}(n,\ell,2,1)|>|\mathcal{F}(n,\ell,2,2)|$,  $|\mathcal{F}(n,k,2,2)|>|\mathcal{F}(n,k,2,3)|$ and  $|\mathcal{F}(n,\ell,2,2)|>|\mathcal{F}(n,\ell,2,3)|$. It follows that $ f_{n,k,\ell,2,3}<f_{n,k,\ell,2,2}<f_{n,k,\ell,2,1}$. Hence,
$$|\mathcal{A}||\mathcal{B}|=f_{n,k,\ell,2,i-2}<f_{n,k,\ell,2,1},$$
which contradicts the maximality of \(|\mathcal{A}||\mathcal{B}|\). This completes the proof. \qed

\section{Acknowledgment}

The research work of Jingjun Bao is partially supported by the National Natural Science Foundation of China Grant No. 12471313. The research work of Lijun Ji is partially supported by the National Natural Science Foundation of China Grant No. 12271390.

\end{document}